\newtheorem{theorem}{Theorem}[section]
\newtheorem{lemma}[theorem]{Lemma}
\newtheorem{proposition}[theorem]{Proposition}
\theoremstyle{remark}
\newtheorem{observation}[theorem]{Remark}
\newtheorem{definition}{Definition}
\newcommand{\set}{\mathbb}
\newcommand{\dl}{\nabla}
\newcommand{\les}{\lesssim}
\newcommand{\mc}{\mathcal}
\newcommand{\be}{\begin{equation}}
\newcommand{\ee}{\end{equation}}
\newcommand{\bee}{\begin{align}}
\newcommand{\eee}{\end{align}}
\newcommand{\ba}{\begin{array}}
\newcommand{\ds}{\displaystyle}
\newcommand{\ea}{\end{array}}
\newcommand{\bpm}{\begin{pmatrix}}
\newcommand{\epm}{\end{pmatrix}}
\newcommand{\lb}{\label}
\DeclareMathOperator{\sgn}{sgn}
\DeclareMathOperator{\supp}{supp}
\DeclareMathOperator{\Imim}{Im}
\DeclareMathOperator{\B}{\mathcal B}
\DeclareMathOperator*{\esssup}{ess\; sup}
\newcommand{\ov}{\overline}
\newcommand{\dd}{{\,}{d}}
\renewcommand{\Im}{\Imim}
\newcommand{\R}{\mathbb R}
\newcommand{\C}{\mathbb C}
\newcommand{\V}{\mathcal V}
\newcommand{\W}{\mathcal W}
\title[Centre-stable manifold]{A Centre-Stable Manifold for the Energy-Critical Wave Equation in $\R^3$ in the Symmetric Setting}
\author{Marius Beceanu}
\address{Rutgers University Department of Mathematics, 110 Frelinghuysen Rd., Piscataway, NJ, 08854, USA}
\email{mbeceanu@math.rutgers.edu}
\thanks{The author was partially supported by a Rutgers Research Council grant.}
\subjclass[2010]{35L05, 35C08, 37K40}
\begin{document}
\maketitle
\numberwithin{equation}{section}
\begin{abstract} Consider the focusing semilinear wave equation in $\R^3$ with energy-critical nonlinearity
$$
\partial_t^2 \psi - \Delta \psi - \psi^5 = 0,\ \psi(0) = \psi_0,\ \partial_t \psi(0) = \psi_1.
$$
This equation admits stationary solutions of the form
$$
\phi(x, a) := (3a)^{1/4} (1+a|x|^2)^{-1/2},
$$
called solitons, which solve the elliptic equation
$$
-\Delta \phi - \phi^5 = 0.
$$
Restricting ourselves to the space of symmetric solutions $\psi$ for which $\psi(x) = \psi(-x)$, we find a local centre-stable manifold, in a neighborhood of $\phi(x, 1)$, for this wave equation in the weighted Sobolev space $\langle x \rangle^{-1} \dot H^1 \times \langle x \rangle^{-1} L^2$. Solutions with initial data on the manifold exist globally in time for $t \geq 0$, depend continuously on initial data, preserve energy, and can be written as the sum of a rescaled soliton and a dispersive radiation term.

The proof is based on a new class of reverse Strichartz estimates, introduced in \cite{becgol} and adapted here to the case of Hamiltonians with a resonance.
\end{abstract}

\tableofcontents
\section{Introduction}
\subsection{Overview}
In this paper we study the semilinear wave equation
\be\lb{wave}
\partial_t^2 \psi - \Delta \psi - \psi^5 = 0,\ \psi(0) = \psi_0,\ \partial_t \psi(0) = \psi_1.
\ee
Energy $E(t)$ is an invariant quantity for equation (\ref{wave}). $E(t)$ is given~by
$$
E(t) := \frac 1 2 \int_{\R^3} |\dl \psi(t)|^2 + (\partial_t \psi(t))^2 \dd x - \frac 1 6 \int_{\R^3} (\psi(t))^6 \dd x.
$$
Equation (\ref{wave}) also admits special stationary solutions of the form $\psi(x, t) = \phi(x)$, where $\phi$ is a positive solution of the semilinear elliptic equation
$$
-\Delta \phi - \phi^5 = 0.
$$
Such solutions exist and are unique up to translation and rescaling, see \cite{aub}, being explicitly given by
$$
\phi(x, a) := (3a)^{1/4} (1+a|x|^2)^{-1/2} = a^{1/4} \phi(a^{1/2}x, 1).
$$
Observe that $\phi(x, a) \in \dot H^1$ with constant norm. In what follows we study small stable perturbations of $\phi(x, a)$, i.e. solutions to (\ref{wave}) that stay close to $\phi(x, a)$ in the $\dot H^1$ norm for all times.

We first examine the spectral properties of $H(a)$. It is known that the continuous spectrum of $H(a)$ is $[0, \infty)$, while the point spectrum contains one negative eigenvalue $-k(a)^2$, $H(a) g(x,a) = -k(a)^2 g(x,a)$, and three eigenvectors at zero, $\partial_{x_j} \phi(x, a)$, $1 \leq j \leq 3$. Here $g(x, a)$ is bounded, radially symmetric and exponentially decaying.

Note also the presence of a zero resonance, $\partial_a \phi(x, a)$ --- a bounded function that satisfies the equation $(-\Delta+V)f=0$ without belonging to $L^2$.

Due to scaling, $k(a) = a^{1/2} k(1)$, we can set $g(x,a) = g(a^{1/2}x, 1)$, and $\partial_a \phi(x, a) = a^{-3/4} \partial_a \phi(a^{1/2}x, 1)$, $\partial_{x_j} \phi(x, a) = a^{3/4} \partial_{x_j} \phi(a^{1/2} x, 1)$.

In the following we set $\phi:=\phi(1)$, $k:=k(1)$, $g:=g(1)$, $V:=V(1)$, $H:=H(1)$, etc.

Furthermore,
$$
\langle \partial_a \phi, g \rangle = k^{-1} \langle \partial_a \phi, Hg \rangle = k^{-1} \langle H \partial_a \phi, g \rangle = 0.
$$

Henceforth we restrict ourselves to the subspace of symmetric functions, i.e.\ functions $f$ for which $f(x)=f(-x)$ for all $x \in \R^3$. The restriction of $H(a)$ to this subspace keeps only the continuous spectrum, the radial negative eigenvalue and the radial resonance at zero.

We denote Lorentz spaces by $L^{p, q}$. For their definition and properties see \cite{bergh}. Note that $L^{3/2, 1}$ is the dual of weak-$L^3$ and that $|\dl|^{-1} L^{3/2, 1} \subset L^{3, 1}$.

Our first result is the following:
\begin{theorem}\lb{main_theorem} For some small $\epsilon>0$, consider the set
$$\begin{aligned}
\mc N_0 = \{&(\psi_0, \psi_1) \in \dot H^1 \times L^2 \mid \|\psi_0 - \phi\|_{\dot H^1 \cap |\dl|^{-1} L^{3/2, 1}} + \|\psi_1\|_{L^2 \cap L^{3/2, 1}} < \epsilon,\\
&\langle k(\psi_0 - \phi) -\psi_1, g \rangle = 0,\ \psi_0(x) = \psi_0(-x),\ \psi_1(x) = \psi_1(-x)\}.
\end{aligned}$$
There exists a function $h:\mc N_0 \to \R$ such that for any $(\psi_0, \psi_1) \in \mc N_0$ the equation (\ref{wave}) with initial data $(\psi_0+h(\psi_0, \psi_1)g, \psi_1 - h(\psi_0, \psi_1)kg)$ has a global solution $\psi(\psi_0, \psi_1) = u + \phi(a(t))$ such that $a(0)=1$, $\|\dot a(t)\|_{L^1 \cap L^\infty} \les \|\psi_0 - \phi\|_{\dot H^1 \cap |\dl|^{-1} L^{3/2, 1}} + \|\psi_1\|_{L^2 \cap L^{3/2, 1}}$ and for $t \geq 0$
$$
\|u(t)\|_{L^{6, 2}_x L^\infty_t \cap L^\infty_x L^2_t \cap L^8_{x, t} \cap L^\infty_x L^1_t} \les \|\psi_0 - \phi\|_{\dot H^1 \cap |\dl|^{-1} L^{3/2, 1}} + \|\psi_1\|_{L^2 \cap L^{3/2, 1}}.
$$
Energy remains bounded:
$$
\|\psi(\psi_0, \psi_1)(t) - \phi\|_{\dot H^1} + \|\partial_t \psi(\psi_0, \psi_1)(t)\|_{L^2} \les \|\psi_0 - \phi\|_{\dot H^1 \cap |\dl|^{-1} L^{3/2, 1}} + \|\psi_1\|_{L^2 \cap L^{3/2, 1}}.
$$
$h(\psi_0, \psi_1)$ and $\psi(\psi_0, \psi_1)$ have the following further properties:
$$\begin{aligned}
|h(\psi_0, \psi_1)| &\les (\|\psi_0 - \phi\|_{\dot H^1 \cap |\dl|^{-1} L^{3/2, 1}} + \|\psi_1\|_{L^2 \cap L^{3/2, 1}})^2;\\
|h(\psi_0^1, \psi_1^1) - h(\psi_0^2, \psi_1^2)| &\les \epsilon(\|\psi_0^1 - \psi_0^2\|_{\dot H^1 \cap |\dl|^{-1} L^{3/2, 1}} + \|\psi_1^1 - \psi_1^2\|_{L^2 \cap L^{3/2, 1}});\\
\|\dot a(\psi_0^1, \psi_1^1) - \dot a(\psi_0^2, \psi_1^2)\|_{L^1_t} &\les \|\psi_0^1 - \psi_0^2\|_{\dot H^1 \cap |\dl|^{-1} L^{3/2, 1}} + \|\psi_1^1 - \psi_1^2\|_{L^2 \cap L^{3/2, 1}};\\
\|u(\psi_0^1, \psi_1^1) - u(\psi_0^2, \psi_1^2)\|_{L^{6, 2}_x L^\infty_t \cap L^\infty_x L^2_t \cap L^\infty_x L^1_t} &\les \|\psi_0^1 - \psi_0^2\|_{\dot H^1 \cap |\dl|^{-1} L^{3/2, 1}} + \|\psi_1^1 - \psi_1^2\|_{L^2 \cap L^{3/2, 1}};\\
\|\psi(\psi_0^1, \psi_1^1) - \psi(\psi_0^2, \psi_1^2)\|_{L^{6, 2}_x L^\infty_t} &\les \|\psi_0^1 - \psi_0^2\|_{\dot H^1 \cap |\dl|^{-1} L^{3/2, 1}} + \|\psi_1^1 - \psi_1^2\|_{L^2 \cap L^{3/2, 1}}.
\end{aligned}$$
\end{theorem}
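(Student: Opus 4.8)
The plan is to construct the centre-stable manifold as the graph of the scalar correction $h$ by combining a Lyapunov--Perron fixed point argument with the modulation ansatz $\psi = u + \phi(a(t))$. First I would decompose the linearized operator around $\phi$ according to its spectrum in the symmetric subspace: one stable direction $g$ with eigenvalue $-k^2$, a one-dimensional unstable direction $g$ with eigenvalue $+k^2$ (the two signs come from writing the second-order equation as a first-order system), and the continuous spectral part governed by $H = -\Delta + V$ which carries a zero resonance but \emph{no} zero eigenvalue after the symmetry restriction (the $\partial_{x_j}\phi$ are odd and hence excluded). The orthogonality condition $\langle k(\psi_0 - \phi) - \psi_1, g\rangle = 0$ in the definition of $\mc N_0$ is precisely the statement that the initial data has no component along the unstable mode, and the role of $h(\psi_0,\psi_1)g$ (together with $-h\,k g$ in the velocity) is to add back a small amount of the \emph{stable} mode so that the full trajectory lands on the manifold; $h$ will be determined by requiring that the unstable component of the solution stay bounded (in fact decay) for all $t \geq 0$.

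Next I would set up the Duhamel formula for $u$ driven by the nonlinearity $N(u) = (u+\phi(a))^5 - \phi^5 - 5\phi^4 u - (\text{modulation terms involving }\dot a)$, and project onto the three spectral pieces. On the continuous-spectrum piece I would invoke the reverse Strichartz estimates from \cite{becgol}, adapted to the resonant Hamiltonian $H$, to control $\|u\|_X$ in the norm $X = L^{6,2}_x L^\infty_t \cap L^\infty_x L^2_t \cap L^8_{x,t} \cap L^\infty_x L^1_t$; the resonance is what forces the weighted spaces $|\dl|^{-1}L^{3/2,1}$ and $L^{3/2,1}$ on the data side and the $L^\infty_x L^1_t$ component on the solution side, since the free resonant evolution decays too slowly in unweighted norms. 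The modulation parameter $a(t)$ is chosen to enforce the orthogonality $\langle u(t), \partial_a\phi(a(t))\rangle = 0$ against the resonance function, which yields an ODE for $\dot a$ of the schematic form $\dot a = \langle N(u), \cdot\rangle / (1 + O(u))$, giving the quadratic bound $\|\dot a\|_{L^1 \cap L^\infty} \lesssim \|u\|_X^2 + (\text{data})^2$; the unstable-mode coefficient satisfies a scalar ODE $\dot \beta = k\beta + (\text{quadratic})$, whose boundedness forces the choice $h = -\int_0^\infty e^{-ks}(\text{quadratic})\,ds$, manifestly quadratic in the data and hence giving the stated bound on $|h|$.

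The fixed point is then run in a ball of the Banach space $X \times \{a : \dot a \in L^1 \cap L^\infty\} \times \R$ for the triple $(u, a, h)$, with the contraction constant controlled by $\epsilon$; uniqueness of the fixed point gives the Lipschitz dependence statements by differencing two solutions and absorbing the $\epsilon$-small terms, and the quantitative estimates on $h(\psi_0^1,\psi_1^1) - h(\psi_0^2,\psi_1^2)$, on $\dot a$, on $u$, and on $\psi$ all come out of the same difference estimate. Global existence for $t \geq 0$ is immediate once $u \in X$ is established on $[0,\infty)$ since the $L^8_{x,t}$ bound is the critical scattering norm; energy boundedness follows from conservation of $E$ together with the smallness of $u$ in $\dot H^1$ and the fixed $\dot H^1$ norm of the rescaled soliton, after checking that $\phi(a(t))$ stays in a small $\dot H^1$-neighborhood of $\phi$ because $|a(t)-1| \lesssim \|\dot a\|_{L^1}$ is small. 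The main obstacle I anticipate is the resonance: establishing the reverse Strichartz and dispersive bounds for $e^{it\sqrt{H}}$ (or the relevant half-wave/sine propagators) when $H$ has a zero-energy resonance, and in particular identifying the exact weighted spaces in which the resonant part of the evolution is still integrable in time, is the technically delicate heart of the argument; everything else is a by-now-standard modulation-plus-fixed-point scheme once those linear estimates are in hand.
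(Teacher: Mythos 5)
Your high-level plan matches the paper: modulation ansatz $\psi = u + \phi(a(t))$, spectral decomposition of the first-order matrix operator $\mc H$ into stable/unstable eigendirections (eigenvalues $\pm k$, not $\pm k^2$) plus a resonant continuous part, reverse Strichartz bounds for the resonant Hamiltonian, and a Lyapunov--Perron determination of $h$ by requiring the unstable component to stay bounded for $t \geq 0$. The two-stage fixed point (stability then contraction), the quadratic bound on $h$ from the $\int_0^\infty e^{-ks}(\cdot)\,ds$ formula, and the Lipschitz statements by differencing are all essentially what the paper does.

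However, there is a genuine gap in your treatment of the modulation condition, and it matters for the stated estimates. You propose the classical orthogonality $\langle u(t), \partial_a\phi(a(t))\rangle = 0$. Two problems. First, $\partial_a\phi \notin L^2$ (it is a resonance, $\sim \langle x\rangle^{-1}$), so this pairing is not directly meaningful against $u \in \dot H^1$; the paper pairs against $V\partial_a\phi$, which has the needed decay. Second and more importantly, the paper's modulation condition (\ref{cond}) is not an orthogonality statement at all: the reverse Strichartz decomposition in Proposition~\ref{prop28} shows that $\tfrac{\sin(t\sqrt H)P_c}{\sqrt H}$ splits into a dispersive piece $S(t)$ plus a non-decaying rank-one term $Q\int_0^t \tfrac{\sin(s\sqrt{-\Delta})}{\sqrt{-\Delta}}\,ds$, and the equation for $\dot a(t)$ is chosen precisely so that this growing rank-one contribution in the Duhamel integral cancels against the $\int_0^t (1/a(s))^{5/4}\dot a(s)\,ds$ term coming from the moving soliton. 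This is tied to the specific representation formula, not to a pointwise orthogonality of $u$.

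As a direct consequence, your claimed bound $\|\dot a\|_{L^1\cap L^\infty} \lesssim \|u\|_X^2 + (\text{data})^2$ is wrong. Condition (\ref{cond}) contains the free-evolution source term $\langle \cos(t\sqrt{-\Delta})(\psi_0-\phi) + \tfrac{\sin(t\sqrt{-\Delta})}{\sqrt{-\Delta}}\psi_1,\ V\partial_a\phi\rangle$, which is \emph{linear} in the data, and this is exactly what the theorem asserts: $\|\dot a\|_{L^1\cap L^\infty}$ is bounded linearly by the data norm. Indeed it is this linear term that forces the weighted data conditions $\psi_0-\phi \in |\dl|^{-1}L^{3/2,1}$, $\psi_1 \in L^{3/2,1}$: by Lemma~\ref{lemma_free}, only then is $\tfrac{\sin(t\sqrt{-\Delta})}{\sqrt{-\Delta}}\psi_1 \in L^\infty_x L^1_t$, so that the pairing against $V\partial_a\phi$ gives $\dot a \in L^1_t$. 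You correctly guessed the weighted spaces are needed because of the resonance, but attributed it to the wrong mechanism; had $\dot a$ really been quadratic, unweighted energy data would have sufficed. The observation at the end of Section~1 in the paper makes this point explicit: with data only in $\dot H^1 \times L^2$ one gets $\dot a \in L^2_t$, which is not enough to close the argument.
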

In other words, we have a codimension-one Lipschitz manifold of initial data in $\dot H^1 \cap |\dl|^{-1} L^{3/2, 1} \times L^2 \cap L^{3/2, 1}$
$$
\mc N = \{(\tilde \psi_0 = \psi_0+h(\psi_0, \psi_1)g, \tilde \psi_1 = \psi_1 - h(\psi_0, \psi_1)kg) \mid (\psi_0, \psi_1) \in \mc N_0\},
$$
for which the solution to (\ref{wave}) exists globally in time, depends continuously on the initial data, and can be written as the sum of a rescaled soliton and a dispersive term or of a fixed soliton and a term that preserves energy.

Note that the setting in which the energy is preserved is not the same as the setting in which the radiation term disperses.

By rescaling we obtain similar manifolds in the neighborhood of each soliton $\phi(x, a)$. These manifolds never intersect, due to the norms in which they are defined: $\phi(x, a_1) - \phi(x, a_2) \not\in L^{3, 1}$ for $a_1 \ne a_2$, while $|\dl|^{-1} L^{3/2, 1} \subset~L^{3/2, 1}$.

The proof of Theorem \ref{main_theorem} is based on the subsequent reverse Strichartz estimates of Proposition \ref{prop28}, as well as on the method of modulation (\cite{soffer1}, \cite{soffer2}). We use modulation to handle the resonance at zero and the similar method introduced in \cite{sch} and \cite{krisch} to control the projection on the imaginary spectrum, see below.

\begin{observation} In the conditions on initial data, $L^{3/2, 1}$ can be replaced by $\mc K_0$, the Kato space closure of the set of bounded, compactly supported functions, where the Kato space $\mc K$ is defined by
$$
\mc K = \Big\{f \mid \sup_y \int_{\R^3} \frac {|f(x)|}{|x-y|} \dd x < \infty\Big\}.
$$
\end{observation}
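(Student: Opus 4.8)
The plan is to re-read the proof of Theorem \ref{main_theorem} and observe that the initial-data integrability conditions $\psi_0 - \phi \in |\dl|^{-1} L^{3/2, 1}$ and $\psi_1 \in L^{3/2, 1}$ are used in exactly one place: the homogeneous reverse Strichartz estimates for the free and distorted wave evolutions, i.e.\ Proposition \ref{prop28} together with its $V = 0$ precursor from \cite{becgol}. These estimates are, at bottom, bounds expressed through the Newton potential $|x|^{-1}$ on $\R^3$, hence are naturally formulated with the Kato norm $\|\cdot\|_{\mc K}$; the $L^{3/2, 1}$ conditions in the statement are merely a convenient sufficient form of them, obtained via the elementary inequality $\|f\|_{\mc K} \les \|f\|_{L^{3/2, 1}}$. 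Since the Duhamel and nonlinear estimates, the modulation equation for $a(t)$, and the removal of the negative eigenvalue never refer to the initial-data space, and since the conclusions of Theorem \ref{main_theorem} are stated purely in $\dot H^1$, $L^2$ and reverse Strichartz norms, carrying the Kato norm through the argument in place of $L^{3/2, 1}$ gives every conclusion verbatim for data with $\psi_0 - \phi \in \dot H^1 \cap |\dl|^{-1} \mc K_0$ and $\psi_1 \in L^2 \cap \mc K_0$.

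First I would record the embedding $L^{3/2, 1} \hookrightarrow \mc K_0$, which shows the replacement really is a weakening. In $\R^3$ one has $\|f\|_{\mc K} = \big\| |\cdot|^{-1} * |f| \big\|_{L^\infty}$ with $|\cdot|^{-1} \in L^{3, \infty}$, so the endpoint (O'Neil) convolution inequality in Lorentz spaces gives $\|f\|_{\mc K} \les \|f\|_{L^{3/2, 1}} \big\| |\cdot|^{-1} \big\|_{L^{3, \infty}} \les \|f\|_{L^{3/2, 1}}$; since bounded compactly supported functions are dense in $L^{3/2, 1}$, continuity of this embedding places $L^{3/2, 1}$ inside the Kato closure $\mc K_0$. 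The inclusion is strict, as witnessed e.g.\ by $f = \sum_{n \ge 1} \mathbf 1_{B(n e_1, n^{-1/3})}$, which lies in $\mc K_0$ but not in $L^{3/2, 1}$.

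Next I would re-run the linear estimates with $\mc K$ throughout. The prototype is the radiation estimate for the free Duhamel term: since $\tfrac{\sin(t|\dl|)}{|\dl|}$ has kernel $(4\pi t)^{-1}$ times surface measure on $\{|x-y| = t\}$, passing to spherical coordinates centred at $x$ gives, for fixed $x$,
\[
\int_0^\infty \Big| \tfrac{\sin(t|\dl|)}{|\dl|} f(x) \Big| \dd t \le \frac{1}{4\pi} \int_0^\infty \frac1t \int_{|x-y| = t} |f(y)| \dd\sigma(y) \dd t = \frac{1}{4\pi} \int_{\R^3} \frac{|f(y)|}{|x-y|} \dd y \le \frac{1}{4\pi} \|f\|_{\mc K},
\]
so $\big\| \int_0^t \tfrac{\sin((t-s)|\dl|)}{|\dl|} F(s) \dd s \big\|_{L^\infty_x L^1_t} \les \|F\|_{L^1_t \mc K_x}$ by the same identity in the $s$ variable, and $\big\| \tfrac{\sin(t|\dl|)}{|\dl|} \psi_1 \big\|_{L^\infty_x L^1_t} \les \|\psi_1\|_{\mc K}$. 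This radiation norm $L^\infty_x L^1_t$ is in fact the only one in which the data integrability is felt: the remaining reverse Strichartz norms of the linear flow ($L^{6, 2}_x L^\infty_t$, $L^\infty_x L^2_t$, $L^8_{x, t}$) are obtained, as in \cite{becgol}, from the $\dot H^1 \times L^2$ regularity of the data alone (together with the bound above). The term $\cos(t|\dl|)(\psi_0 - \phi)$ is handled, after the half-wave decomposition, through the $\mc K$-norm of $|\dl|(\psi_0 - \phi)$, the passage from the single derivative to the Kato structure using that the Newton potential is the convolution square of the Riesz potential $|x|^{-2}$; this is exactly what the hypothesis $|\dl|(\psi_0 - \phi) \in L^{3/2, 1}$ was supplying. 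For the distorted evolution one expands $\cos(t\sqrt H)$ and $\tfrac{\sin(t\sqrt H)}{\sqrt H}$, projected off $g$ and the zero resonance $\partial_a \phi$, in a Born series in $V = -5\phi^4 \in L^{3/2, 1} \cap L^\infty \subset \mc K$: each term is a free evolution fed by a source $V \cdot w$, and the factor $V$ returns a Kato bound against the bootstrap norms of $w$; thus Proposition \ref{prop28} and the whole iteration go through with $\mc K$ replacing $L^{3/2, 1}$, and the nonlinear source is estimated exactly as before, with no appeal to the data norm.

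The one genuinely delicate point --- and the reason the statement must be phrased with the closure $\mc K_0$ and not with all of $\mc K$ --- is that $\mc K$ is not separable and the bounded compactly supported functions are not dense in it. The construction of $h(\psi_0, \psi_1)$ and the continuous-dependence assertions of Theorem \ref{main_theorem}, which proceed by first treating Schwartz data and passing to a limit, survive on $\mc K_0$ precisely because $\mc K_0$ is by definition the closure of such a class, whereas on $\mc K$ those limiting arguments would fail. One must also check that the resolvent identities underpinning the Born series in Proposition \ref{prop28} --- in particular the inversion that excises the zero resonance --- can be carried out on a Banach space modelled on the Kato norm rather than on $L^{3/2, 1}$, which again reduces to boundedness of $V$-weighted free resolvents between such spaces and follows from the $|x|^{-1}$-kernel bounds above. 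I expect this bookkeeping, rather than any new estimate, to be where the actual effort goes.
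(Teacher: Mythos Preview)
The paper states this as a remark without proof, so there is no argument of the paper's to compare against. Your plan is essentially the right one: the only place the $L^{3/2,1}$ condition on the data is used is in the $L^\infty_x L^1_t$ reverse Strichartz bounds of Lemma~\ref{lemma_free} and Proposition~\ref{prop28} (and, through these, in the $L^1_t$ estimate on $\dot a$ from~\eqref{cond_lin}); your kernel computation $\int_0^\infty \big|\tfrac{\sin(t\sqrt{-\Delta})}{\sqrt{-\Delta}}f(x)\big|\dd t \le (4\pi)^{-1}\|f\|_{\mc K}$ is exactly the substitution needed, and the paper's own proof of Lemma~\ref{lemma_free} already passes through this very expression before invoking $L^{3/2,1}\subset\mc K$.

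There is, however, a gap in your treatment of the perturbed evolution. A Born series in $V$ for $\cos(t\sqrt H)P_c$ and $\tfrac{\sin(t\sqrt H)}{\sqrt H}P_c$ does not converge here: the negative eigenvalue and the zero resonance force $\|VR_0(0)\|\ge 1$, which is precisely why the paper needs the explicit expansion of Lemma~\ref{lemma24} rather than a Neumann series. Fortunately neither a Born series nor a new inversion ``on a Banach space modelled on the Kato norm'' is required. Proposition~\ref{prop28} already factors $S(t)$ and $C(t)$ as $P_c\widehat Z$ convolved in $t$ with the free evolution, with $\widehat Z\in\V_{L^\infty}$ constructed from $V$ alone and independent of any data space; since elements of $\V_{L^\infty}$ act boundedly on $L^\infty_x L^1_t$, the perturbed $L^\infty_x L^1_t$ bounds follow immediately from your free ones with no further work. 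A smaller correction: the density step that forces $\mc K_0$ rather than $\mc K$ is not in the construction of $h$ (which is a direct contraction, Propositions~\ref{prop_stab} and~\ref{prop_cont}, with no approximation of data) but in the proof of the free cosine bound in Lemma~\ref{lemma_free}, where the integration-by-parts identity is first justified for bounded compactly supported $g$ and then extended by density.
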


The manifold $\mc N$ also enjoys a uniqueness property formulated in the following statement.

\begin{definition}
We call a solution $\psi(t)$ of (\ref{wave}) \emph{small orbitally stable} if, for some small fixed $\epsilon>0$ and for all $t \geq 0$ $\psi(t) = \phi + v(t)$, $\|(v(0), \partial_t v(0))\|_{|\dl|^{-1} L^{3/2, 1} \cap \dot H^1 \times L^{3/2, 1} \cap L^2} < \epsilon$, $\|v(t)\|_{L^{6, 2}_x L^\infty_t} < \epsilon$, and $\|\partial_t v(t)\|_{L^\infty_t L^2_x} < \infty$.
\end{definition}
\begin{proposition}\lb{sas} If $\psi(t)$ is a small orbitally stable solution, then $\psi(0) \in \mc N$.
\end{proposition}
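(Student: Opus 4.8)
The plan is to show that any small orbitally stable solution $\psi = \phi + v$ (which, as in the standing convention, we take to be symmetric) is itself one of the solutions produced by Theorem~\ref{main_theorem}, so that $\psi(0) \in \mc N$ follows automatically. The first task is a bootstrap: the hypotheses $\|v\|_{L^{6,2}_x L^\infty_t} < \epsilon$ and $\|\partial_t v\|_{L^\infty_t L^2_x} < \infty$ are weaker than the mixed norms $L^{6,2}_x L^\infty_t \cap L^\infty_x L^2_t \cap L^8_{x,t} \cap L^\infty_x L^1_t$ controlled in Theorem~\ref{main_theorem}, and I would recover the latter by running the reverse-Strichartz and Duhamel analysis from the proof of Theorem~\ref{main_theorem} on the radiation part $u$ (see below), together with the nonlinear estimates of that proof, and absorbing error terms by smallness of $\epsilon$. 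Simultaneously perform the modulation $\psi(t) = u(t) + \phi(a(t))$ with the orthogonality condition removing the zero resonance, obtaining $a(0)$ close to $1$, $\|\dot a\|_{L^1_t \cap L^\infty_t}$ small, and $u$ small in the norms above. Since $\psi(0) - \phi \in |\dl|^{-1} L^{3/2,1} \cap \dot H^1 \subset L^{3,1}$ while $\phi(a(0)) - \phi \notin L^{3,1}$ whenever $a(0) \ne 1$, the requirement $u(0) = \psi(0) - \phi(a(0)) \in |\dl|^{-1} L^{3/2,1}$ forces $a(0) = 1$, hence $u(0) = v(0)$.

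Next I would use orbital stability to extract the codimension-one condition. Splitting $u$ along the radial negative eigenfunction $g$ of $H$ and the rest of the spectrum, and denoting by $b(t)$ the $g$-coefficient of $u(t)$, the equation for $u$ yields $\ddot b - k^2 b = F$ with $F$ bounded quadratically in the (small) norms of $u$ and $\dot a$, so $e^{-ks}F(s) \in L^1_s[0,\infty)$. Solving this ODE, the coefficient controlling the $e^{kt}$-asymptotics of $b$ equals the initial projection of the data onto the unstable mode plus a quadratically small Duhamel term; the bound $\|v\|_{L^{6,2}_x L^\infty_t} < \epsilon < \infty$ forbids exponential growth of $b$, so this coefficient must vanish. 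This is precisely the scalar condition that the function $h$ in the proof of Theorem~\ref{main_theorem} is constructed to enforce.

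To conclude, let $h_* \in \R$ be the unique number such that $(\psi_0, \psi_1) := (\phi + v(0) - h_* g,\ \partial_t v(0) + h_* k g)$ satisfies $\langle k(\psi_0 - \phi) - \psi_1, g\rangle = 0$, i.e.\ $h_* = (2k\langle g,g\rangle)^{-1}\langle k v(0) - \partial_t v(0), g\rangle$; since $g$ decays exponentially, $|h_*| \les \|v(0)\|_{\dot H^1} + \|\partial_t v(0)\|_{L^2} < \epsilon$. Then $(\psi_0, \psi_1) \in \mc N_0$ --- smallness and symmetry are clear and the defining constraint holds by the choice of $h_*$ --- and $\psi(0) = (\psi_0 + h_* g,\ \psi_1 - h_* k g)$ by construction. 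Thus $\psi$ is a global solution of (\ref{wave}) with data of the form handled in Theorem~\ref{main_theorem}; by the first paragraph it belongs to the function space on which the fixed-point scheme of that theorem contracts, and by the second paragraph it satisfies the unstable-mode cancellation used there to single out $h$. Uniqueness of the fixed point then gives $h_* = h(\psi_0,\psi_1)$ and $\psi = \psi(\psi_0,\psi_1)$, so $\psi(0) = (\psi_0 + h(\psi_0,\psi_1)g,\ \psi_1 - h(\psi_0,\psi_1)kg) \in \mc N$.

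The main obstacle is the bootstrap of the first paragraph: small orbital stability supplies only the two weak bounds above, and one must re-run the full a priori analysis behind Theorem~\ref{main_theorem} --- the reverse Strichartz estimates of Proposition~\ref{prop28}, the modulation estimates, and the nonlinear estimates --- under those hypotheses to recover the strong mixed norms and to legitimize the modulation and the spectral splitting of $u$. A secondary subtlety lies in the last paragraph: one must check that the weak hypotheses genuinely place $(\psi, h_*)$, together with its modulation path, inside the ball on which the contraction of Theorem~\ref{main_theorem} acts, and that the cancellation condition derived from orbital stability coincides with the one used there to define $h$; with those identifications in hand, uniqueness of the fixed point finishes the proof.
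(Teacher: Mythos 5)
Your proposal is in the right spirit but takes a substantially heavier route than the paper, and the heaviness is precisely where the gaps lie. The paper does \emph{not} perform modulation in the proof of Proposition~\ref{sas}, does not argue $a(0)=1$ via the $L^{3,1}$ incompatibility, and --- most importantly --- does not attempt the bootstrap that you identify as your ``main obstacle.'' Instead it defines $h_0$ (your $h_*$) from $(v(0),\partial_t v(0))$ exactly as you do, notes that $(\psi_0,\psi_1)\in\mc N_0$, lets $\tilde v$ denote the radiation part of the Theorem~\ref{main_theorem} solution with that data, and then directly compares $v$ with $\tilde v$. Writing the \emph{unmodulated} equations for $v$ and $\tilde v$ relative to the fixed soliton $\phi$ and decomposing into $x_{\pm}$ and $P_c$ components, it subtracts, uses $\|\partial_t v\|_{L^\infty_t L^2_x}<\infty$ to force the vanishing of the unstable coefficient of the difference (yielding $|h_0-h(\psi_0,\psi_1)|\les\epsilon\|v-\tilde v\|_{L^{6,2}_x L^\infty_t}$), and closes a contraction estimate purely in the $L^{6,2}_x L^\infty_t$ norm --- the single norm that the orbital-stability hypothesis actually supplies. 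This gives $v\equiv\tilde v$ and $h_0=h(\psi_0,\psi_1)$ with no need to show $v$ lies in $L^\infty_x L^2_t\cap L^\infty_x L^1_t\cap L^8_{x,t}$ or to re-run the modulation and fixed-point machinery.

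The concrete gap in your plan is the first-paragraph bootstrap. It is not merely an obstacle to be ``absorbed by smallness'': the a priori estimates of Theorem~\ref{main_theorem} are proved for the iterates of a contraction map starting from zero, not for an arbitrary solution that happens to be small in $L^{6,2}_x L^\infty_t$; promoting the latter into the full space $X=L^{6,2}_x L^\infty_t\cap L^\infty_x L^2_t\cap L^\infty_x L^1_t$ with $\dot a\in L^1\cap L^\infty$ would require an independent continuity/propagation argument that your sketch does not supply and that the paper deliberately sidesteps. The modulation step is similarly superfluous here. On the positive side, your identification of $h_*=(2k\langle g,g\rangle)^{-1}\langle kv(0)-\partial_t v(0),g\rangle$ and your observation that finiteness of $\|\partial_t v\|_{L^\infty_t L^2_x}$ kills the exponentially growing mode are exactly the two ingredients the paper actually uses; the fix is to keep those and replace the bootstrap-plus-modulation scaffolding with a direct uniqueness/contraction comparison against the Theorem~\ref{main_theorem} solution in the weak norm you already control.
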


In order to state our next result, we introduce the notion of a \emph{centre-stable manifold}, following Bates--Jones \cite{bates}. These authors proved for a large class of semilinear equations that the space of solutions locally decomposes into an unstable and a centre-stable manifold. Their result is as follows. Consider a Banach space $X$ and the semilinear equation
\be\lb{117}
u_t = A u + f(u),
\ee
under the assumptions
\begin{enumerate}
\item[H1] $A:X \to X$ is a closed, densely defined linear operator that generates a $C_0$ group.
\item[H2]\lb{H2} The spectrum of $A$, $\sigma(A) = \sigma_s(A) \cup \sigma_c(A) \cup \sigma_u(A)$, decomposes into left half-plane (stable), imaginary (centre), and right half-plane (unstable) components. The stable and unstable components, $\sigma_s(A)$ and $\sigma_u(A)$, are bounded.
\item[H3]\lb{H3} The nonlinearity $f$ is locally Lipschitz, $f(0) = 0$, and for every $\epsilon>0$ there exists a neighborhood of $0 \in X$ on which $\|f(x) - f(y)\| \leq \epsilon \|x-y\|$.
\end{enumerate}
Let $X^u$, $X^c$, and $X^s$ be the $A$-invariant subspaces corresponding to $\sigma_u$, $\sigma_c$, and $\sigma_s$ and let $S^c(t)$ be the evolution generated by $A$ on $X^c$. \cite{bates} further assume that
\begin{enumerate}
\item[C1-2] $\dim X^u$ and $\dim X^s$ are finite.
\item[C3]\lb{C3} $S^c$ has subexponential growth: $\forall \rho>0$ $\exists M>0$ such that $\|S^c(t)\| \leq M e^{\rho|t|}$.
\end{enumerate}

Let $\Upsilon$ be the flow on $X$ generated by (\ref{117}). $\mc N \subset U$ is called $t$-invariant if $\Upsilon(s)v \in U$ for all $s \in [0, t]$ implies that $\Upsilon(s)v \in \mc N$ for $s \in [0, t]$.

\begin{definition}\lb{unstable}
Let the \emph{unstable} manifold $W^u \subset U$ be the set of solutions that remain in $U$ for all $t<0$ and decay exponentially as $t \to -\infty$:
$$
W^u = \{u \in U \mid \forall t \leq 0\ \Upsilon(t) u \in U,\ \exists C_1>0\ \forall t \leq 0\ \|\Upsilon(t) u\|_X \les e^{C_1 t} \}.
$$
\end{definition}
Also consider the canonical direct sum spectral projection $\pi^{cs}$ onto the centre-stable part of the spectrum: $\pi^{cs}(X) = X^c \oplus X^s$.
\begin{definition}
A \emph{centre-stable} manifold $\mc N \subset U$ is a Lipschitz manifold (i.e.\ parametrized by Lipschitz maps) such that $\mc N$ is $t$-invariant relative to $U$, $\pi^{cs}(\mc N)$ contains a neighborhood of $0$ in $X^c \oplus X^s$, and $\mc N \cap W^u = \{0\}$.
\lb{centr}
\end{definition}

The conclusion of \cite{bates} is then
\begin{theorem}
Under assumptions H1-H3 and C1-C3, locally around $0$, there exist an unstable Lipschitz manifold $W^u$ tangent to $X^u$ at $0$ and a centre-stable manifold $W^{cs}$ tangent to $X^{cs}$ at $0$.
\lb{t1}
\end{theorem}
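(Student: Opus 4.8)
The plan is to construct both manifolds by the Lyapunov--Perron method applied to the Duhamel (variation--of--parameters) formulation of (\ref{117}). First I would pass to a global problem: by H3 the Lipschitz constant of $f$ on the ball $B_R(0) \subset X$ tends to $0$ as $R \to 0$, so replacing $f$ by $f_R := \chi(\cdot/R) f$ with $\chi$ a smooth cutoff equal to $1$ near $0$ yields a globally Lipschitz $f_R$ with constant $\eta(R) \to 0$, which still vanishes at $0$ and agrees with $f$ on a neighborhood $U \subset B_R(0)$. All manifolds are built for $f_R$ and then restricted to $U$, where they coincide with the desired local objects. By H2 the spectral projections $\pi^s,\pi^c,\pi^u$ are bounded and commute with the flow, $X = X^s \oplus X^c \oplus X^u$, and the restricted semigroups satisfy $\|S^s(t)\| \le M e^{-\alpha t}$ for $t \ge 0$, $\|S^u(t)\| \le M e^{\beta t}$ for $t \le 0$ (using that $\sigma_u$ is bounded and in the open right half-plane, so $S^u$ is a group on the finite-dimensional $X^u$), and by C3 $\|S^c(t)\| \le M_\rho e^{\rho|t|}$ for every $\rho>0$. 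Fix $\rho$ with $0 < \rho < \min(\alpha,\beta)$.

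For the centre-stable manifold, on the Banach space $Y^+_\rho$ of continuous curves $u:[0,\infty) \to X$ with $\|u\|_{Y^+_\rho} := \sup_{t \ge 0} e^{-\rho t}\|u(t)\| < \infty$, and for each $\xi = \xi^s + \xi^c \in (X^s \oplus X^c) \cap B_\delta$, define
$$
(\mc T_\xi u)(t) = S^s(t)\xi^s + S^c(t)\xi^c + \int_0^t S^s(t-s)\pi^s f_R(u(s))\dd s + \int_0^t S^c(t-s)\pi^c f_R(u(s))\dd s - \int_t^\infty S^u(t-s)\pi^u f_R(u(s))\dd s .
$$
The three integral terms are bounded in $Y^+_\rho$ by $\les \eta(R)\,\alpha^{-1}$, $\les \eta(R)\,(\rho-\rho')^{-1}$ (splitting $\|S^c\| \le M_{\rho'} e^{\rho'|\cdot|}$ with $\rho' < \rho$), and $\les \eta(R)\,(\beta-\rho)^{-1}$, each times $\|u\|_{Y^+_\rho}$; hence for $R$ small $\mc T_\xi$ is a contraction on $\{\|u\|_{Y^+_\rho} \le 2M\|\xi\|\}$, with fixed point $u(\cdot,\xi)$ depending Lipschitz-continuously on $\xi$. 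Setting $\psi(\xi) := \pi^u u(0,\xi) = -\int_0^\infty S^u(-s)\pi^u f_R(u(s,\xi))\dd s$ defines a Lipschitz map $(X^s \oplus X^c)\cap B_\delta \to X^u$ with $\psi(0)=0$ and Lipschitz constant $\les \eta(R)$, so $W^{cs} := \{\xi + \psi(\xi)\}$ is a Lipschitz manifold tangent to $X^{cs}$ at $0$, and $\pi^{cs}(W^{cs})$ contains a neighborhood of $0$ in $X^c \oplus X^s$. Its $t$-invariance relative to $U$ holds because the Duhamel identity above exactly characterizes those solutions that remain in $U$ and lie in $Y^+_\rho$, and a time-translate of such a solution again satisfies it as long as it stays in $U$.

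The unstable manifold is obtained symmetrically, on curves $u:(-\infty,0] \to X$ with $\sup_{t \le 0} e^{-C_1 t}\|u(t)\| < \infty$ for a fixed $C_1$ with $0 < C_1 < \beta$ (so that, reading time backward, the growing factor $S^u$ is dominated), the Lyapunov--Perron map now carrying the $X^u$ datum free and expressing $\pi^s, \pi^c$ of the solution by integrals from $-\infty$; the fixed point yields $W^u$ as the graph of a Lipschitz map $X^u \cap B_\delta \to X^s \oplus X^c$, finite-dimensional by C1--C2 and tangent to $X^u$ at $0$, and this graph agrees with Definition \ref{unstable} since membership in either side is equivalent to solving that integral equation. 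Finally $W^{cs} \cap W^u = \{0\}$: a common point produces a solution on all of $\R$, bounded by $Ce^{C_1 t}$ as $t \to -\infty$ and by $Ce^{\rho t}$ as $t \to +\infty$ and staying in $U$; projecting the global Duhamel formula onto $X^s, X^c, X^u$ and using the forward decay of $S^s$, the backward decay of $S^u$, and the gaps $\rho, C_1 < \alpha, \beta$ for the centre part forces $\|u\|_{L^\infty(\R;X)} \les \eta(R)\|u\|_{L^\infty(\R;X)}$, hence $u \equiv 0$.

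The main obstacle I expect is the centre direction: one must choose the weight $\rho$ and the cutoff radius $R$ so that the merely subexponential (non-decaying) bound C3 on $S^c$ still produces a genuine contraction, which hinges on converting H3 quantitatively into a global Lipschitz constant $\eta(R) \to 0$ small enough to beat $\alpha^{-1} + (\rho-\rho')^{-1} + (\beta-\rho)^{-1}$. The secondary delicate point is the bookkeeping needed to make the local manifolds independent of the cutoff and to verify $t$-invariance \emph{relative to} $U$ rather than global invariance, together with the forward--backward uniqueness argument underlying $W^{cs} \cap W^u = \{0\}$.
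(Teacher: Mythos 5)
The paper itself does not prove Theorem \ref{t1}; it is quoted as a result of Bates--Jones \cite{bates}, whose proof uses the Hadamard graph-transform method: one cuts off the nonlinearity as you do, but then iterates the time-one map of the (cut-off) semiflow on a space of Lipschitz sections $\sigma\colon X^{cs}\to X^u$ (respectively $X^u\to X^{cs}$), proves a contraction on that space together with invariant cone conditions, and finally upgrades from time-one-map invariance to flow invariance. Your Lyapunov--Perron integral-equation construction is the other classical route to the same statement. The two are essentially equivalent in power here: Lyapunov--Perron is usually cleaner for tangency at $0$ and for dependence on parameters, while the Hadamard method gives the cone structure (which Bates--Jones exploit for the additional attraction and foliation conclusions of their full theorem) more directly. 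Your exponent bookkeeping for the centre direction -- choosing $0<\rho'<\rho<\min(\alpha,\beta)$ and making the cut-off Lipschitz constant $\eta(R)$ beat the three resulting constants -- is the correct way to neutralize the merely subexponential growth permitted by C3.

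One point to tighten. After the cutoff the fixed-point trajectories solve $u_t=Au+f_R(u)$, and since centre-stable trajectories are only subexponentially controlled they may escape the ball $B_R$ where $f_R=f$; your claim that the Duhamel identity ``exactly characterizes those solutions that remain in $U$'' is stronger than what is true (a curve in $Y^+_\rho$ need not stay in $U$). The statement that actually gives $t$-invariance \emph{relative to} $U$ in the sense of Definition \ref{centr} is: any genuine trajectory of the original equation that remains in $U$ on a time interval coincides there with a cut-off trajectory, which lies in $Y^+_\rho$ and hence on the constructed graph, for as long as it stays in $U$. Similarly, for $W^u$ you should fix a single weight $0<C_1<\beta$ once and for all and note that every solution which decays backward in time at some exponential rate automatically lies in the weighted space with that $C_1$; that is how the fixed-point characterization captures all of Definition \ref{unstable}, not just the solutions whose rate happens to be the one you chose.
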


Consider the set
$$\begin{aligned}
\tilde {\mc N_0} = \{&(\psi_0, \psi_1) \in \langle x \rangle^{-1} \dot H^1 \times \langle x \rangle^{-1} L^2 \mid \|\psi_0 - \phi\|_{\langle x \rangle^{-1} \dot H^1} + \|\psi_1\|_{\langle x \rangle^{-1} L^2} < \epsilon,\\
&\langle k(\psi_0 - \phi) -\psi_1, g \rangle = 0,\ \psi_0(x) = \psi_0(-x),\ \psi_1(x) = \psi_1(-x)\}.
\end{aligned}$$
Note that $\langle x \rangle^{-1} \dot H^1 \times \langle x \rangle^{-1} L^2 \subset |\dl|^{-1} L^{3/2, 1} \cap \dot H^1 \times L^{3/2, 1} \cap L^2$. Then
$$
\tilde{\mc N} = \{(\tilde \psi_0 = \psi_0 + h(\psi_0, \psi_1) g, \tilde \psi_1 = \psi_1 - h(\psi_0, \psi_1) k g) \mid (\psi_0, \psi_1) \in \tilde {\mc N_0}\}
$$
is a set of initial data which lead to global-in-time solutions for $t \geq 0$.
\begin{proposition}[Main result]\lb{invariant} Consider a solution $\psi(\psi_0, \psi_1)$ to (\ref{wave}) with initial data
$$
(\tilde \psi_0 = \psi_0 + h(\psi_0, \psi_1) g(a), \tilde \psi_1 = \psi_1 - h(\psi_0, \psi_1) k g) \in \tilde {\mc N},
$$
where $(\psi_0, \psi_1) \in \tilde {\mc N_0}$. Then $(\psi(\psi_0, \psi_1)(t), \partial_t \psi(\psi_0, \psi_1)(t)) \in \tilde{\mc N}$ for sufficiently small $|t|$ (in other words, $\tilde {\mc N}$ is locally in time invariant under the action of the equation (\ref{wave})) and $(\psi(\psi_0, \psi_1)(t), \partial_t \psi(\psi_0, \psi_1)(t)) \in \langle x \rangle^{-1} \dot H^1 \times \langle x \rangle^{-1} L^2$ for all $t \geq 0$.

Furthermore, $\tilde{\mc N}$ is a centre-stable manifold for (\ref{wave}).
\end{proposition}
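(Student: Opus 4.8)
The plan is to treat the two substantive assertions separately: (i) that a solution issuing from $\tilde{\mc N}$ keeps the weighted regularity $\langle x\rangle^{-1}\dot H^1\times\langle x\rangle^{-1}L^2$ for all $t\geq0$ and that $\tilde{\mc N}$ is preserved by the flow for short times, and (ii) that $\tilde{\mc N}$ satisfies each clause of Definition~\ref{centr}. Everything rests on Theorem~\ref{main_theorem} (global existence, the decomposition $\psi=u+\phi(a(t))$, the bounds on $u$ and $\dot a$, and the size and Lipschitz estimates for $h$), on Proposition~\ref{sas} (uniqueness of small orbitally stable solutions), on the reverse Strichartz estimates of Proposition~\ref{prop28}, and on the inclusion $\langle x\rangle^{-1}\dot H^1\times\langle x\rangle^{-1}L^2\subset|\dl|^{-1}L^{3/2,1}\cap\dot H^1\times L^{3/2,1}\cap L^2$.

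For the propagation of weighted regularity I would work directly with $\psi$ rather than through the modulation decomposition, because the scaling mode $\partial_a\phi$ decays only like $|x|^{-1}$, so $\partial_a\phi\notin\langle x\rangle^{-1}L^2$ and neither $u(t)$ nor $\dot a(t)\,\partial_a\phi(a(t))$ separately lies in the weighted space although their sum does. Setting $v=\psi-\phi$ and $w=\langle x\rangle v$, one checks that $w$ solves a wave equation perturbed by the potential $-5\phi^4$, by a first-order term with coefficient $O(\langle x\rangle^{-1})$ coming from $[\Delta,\langle x\rangle]$, and by $\langle x\rangle$ times the nonlinearity; crucially $\langle x\rangle\phi(\cdot,1)\equiv3^{1/4}$ is constant, so the soliton contributes nothing to $\nabla(\langle x\rangle\psi)$, and $\langle x\rangle(\phi(a)-\phi(1))$ tends to a constant at spatial infinity with square-integrable gradient, so the only thing to control is a weighted estimate for the radiation part. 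I would obtain this by combining finite speed of propagation and weighted commutator bounds for the free evolution with the reverse Strichartz estimates of Proposition~\ref{prop28} applied in the weighted topology to the Duhamel term: on any fixed interval the weighted norms are finite, and iterating over unit time intervals (where the norm may grow but stays finite) yields $(\psi(t),\partial_t\psi(t))\in\langle x\rangle^{-1}\dot H^1\times\langle x\rangle^{-1}L^2$ for all $t\geq0$. The short-time version of the same analysis, after re-expressing $\psi(t)$ as a base point plus its $h$-correction, gives the local-in-time invariance of $\tilde{\mc N}$.

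For the manifold structure, I would first identify the spectral subspaces of the linearized generator $\mc H$ (the first-order form of $\partial_t^2v-\Delta v-5\phi^4v=0$) on symmetric functions: its continuous spectrum fills $i\R$, and the only spectrum off the imaginary axis is the pair of eigenvalues $\pm k$ with eigenvectors $(g,\pm kg)$, so, with the sign of $k$ fixed so that $(g,-kg)$ spans the direction decaying exponentially as $t\to-\infty$, $X^u=\mathrm{span}\{(g,-kg)\}$ is one-dimensional and $X^c\oplus X^s=X^{cs}$ is its complement. The defining relation $\langle k(\psi_0-\phi)-\psi_1,g\rangle=0$ of $\tilde{\mc N_0}$ is exactly the vanishing of the $X^u$-projection of $(\psi_0-\phi,\psi_1)$, so $\tilde{\mc N_0}$ is the $\epsilon$-ball in the symmetric part of $X^{cs}$, while $(\psi_0,\psi_1)\mapsto h(\psi_0,\psi_1)\,(g,-kg)$ is an $X^u$-valued Lipschitz map (by Theorem~\ref{main_theorem}); hence $\tilde{\mc N}$ is the Lipschitz graph of this map over a neighborhood of $0$ in $X^{cs}$, it is tangent to $X^{cs}$ at $0$ because $|h|\les(\cdots)^2$, and, since $\pi^{cs}$ annihilates the $X^u$-valued correction, $\pi^{cs}(\tilde{\mc N})$ equals that neighborhood of $0$ in $X^c\oplus X^s$. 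Taking $U$ a small ball about $(\phi,0)$ in $\langle x\rangle^{-1}\dot H^1\times\langle x\rangle^{-1}L^2$, $t$-invariance relative to $U$ follows because whenever the orbit of a point of $\tilde{\mc N}$ stays in $U$ on $[0,t]$, the time-translated solution at each $s\in[0,t]$ is small orbitally stable — its data is $O(\epsilon)$ in $|\dl|^{-1}L^{3/2,1}\cap\dot H^1\times L^{3/2,1}\cap L^2$ by the inclusion above and satisfies the $L^{6,2}_xL^\infty_t$ and $L^\infty_tL^2_x$ bounds of Theorem~\ref{main_theorem} — so Proposition~\ref{sas} puts its data in $\mc N$, and the weighted persistence puts it in $\langle x\rangle^{-1}\dot H^1\times\langle x\rangle^{-1}L^2$, hence in $\tilde{\mc N}$. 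Finally $\tilde{\mc N}\cap W^u=\{0\}$: a point in both yields a solution that is global, bounded for $t\geq0$ by Theorem~\ref{main_theorem}, and exponentially decaying as $t\to-\infty$ by Definition~\ref{unstable}, and feeding this into the modulation equations one finds, from the scalar ODE $\dot c=\sigma c+O(c^2)$ for the unstable-mode coefficient (which must vanish since the solution is bounded forward) together with a no-small-solutions estimate based on the reverse Strichartz bounds and the backward decay, that the solution is the soliton, so the point is $0$.

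The hard part will be the propagation of the weighted regularity for all $t\geq0$: the weight $\langle x\rangle^{-1}$ is incompatible with the global dispersive decay that drives Theorem~\ref{main_theorem}, so it cannot simply be read off, and one must instead feed finite speed of propagation, the $[\Delta,\langle x\rangle]$ commutator, and a weighted version of the reverse Strichartz estimates of Proposition~\ref{prop28} into a bootstrap carried out directly on $\psi$, since the modulation decomposition is not weight-compatible. A secondary delicate point is the backward rigidity behind $\tilde{\mc N}\cap W^u=\{0\}$, which requires controlling the linearized unstable mode; the remaining clauses of Definition~\ref{centr} are essentially bookkeeping translating the modulation description into the Bates--Jones spectral language.
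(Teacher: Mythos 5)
Your proposal follows the same overall architecture as the paper: work with $\tilde v=\langle x\rangle v$ (the paper uses $xv$, which is equivalent) to propagate the weighted regularity, invoke the uniqueness of small orbitally stable solutions for local-in-time invariance, observe that $\pi^{cs}$ kills the $h$-correction so that $\pi^{cs}(\tilde{\mc N})$ covers a neighborhood of $0$ in $X^c\oplus X^s$, and verify the three clauses of Definition~\ref{centr}. Two places genuinely diverge from the paper's proof, and they are worth flagging.

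First, for the propagation of weighted regularity you propose to invoke ``a weighted version of the reverse Strichartz estimates of Proposition~\ref{prop28}'' together with finite speed of propagation. The paper does not prove any weighted reverse Strichartz estimates and does not use finite speed of propagation here. Instead it writes the nonlinear wave equation satisfied by $\tilde v=xv$ (the forcing consists of $-2\nabla v - xVv + \tilde N(\tilde v,v,\phi)$, so the commutator $[\Delta,x]$ supplies the first-order term you anticipated), and controls the forcing in $L^1_tL^2_x$ on time intervals $[t_1,t_2]$ with $t_2-t_1\leq 1$ using only the ordinary Keel--Tao Strichartz estimates and the $L^8_{x,t}$ and $L^5_tL^{10}_x$ bounds on $v$ that were already established via energy conservation in the proof of Theorem~\ref{main_theorem}. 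Iterating over unit time steps gives an $e^{t}$ bound on the weighted norm, which suffices for finiteness. Your instinct that the norm ``may grow but stays finite'' is exactly right; the point is that no new, weighted dispersive tool is needed, and trying to build one would be a substantial detour.

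Second, for $(\tilde{\mc N}-(\phi,0))\cap W^u=\{0\}$, you propose deriving from the modulation equations a scalar ODE $\dot c=\sigma c+O(c^2)$ for the unstable coefficient and combining it with a ``no-small-solutions'' estimate to show the solution is the soliton. This is plausible in outline but considerably more work than the paper's argument, and the claim ``the solution is the soliton'' would need its own rigidity proof. The paper instead uses energy conservation directly: for any point of $\tilde{\mc N}-(\phi,0)$ that also lies in $W^u$, the solution exists for all $t\leq0$, remains in $\tilde{\mc N}-(\phi,0)$, and decays in $\langle x\rangle^{-1}\dot H^1\times\langle x\rangle^{-1}L^2$ as $t\to-\infty$; Theorem~\ref{main_theorem}, applied with data at time $t\leq0$, gives
$$
\|(v(0),\partial_t v(0))\|_{\dot H^1\times L^2} \les \|(v(t),\partial_t v(t))\|_{\langle x\rangle^{-1}\dot H^1\times\langle x\rangle^{-1}L^2},
$$
and letting $t\to-\infty$ forces $v\equiv0$. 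This avoids all analysis of the unstable-mode ODE. You should adopt this route; the modulation/ODE route is a real gap in the sense that it is much farther from being complete.

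One small bookkeeping caution: your identification of $X^u=\mathrm{span}\{(g,-kg)\}$ is only consistent with a particular sign convention for $k$, and you should verify it against the paper's equation for $x_\pm$ (the mode $x_+$, which the constraint $\langle k(\psi_0-\phi)-\psi_1,g\rangle=0$ sets to zero before the $h$-correction and which the $h$-correction then perturbs, is the one obeying $\partial_t x_+= kx_+ + \cdots$). Getting this sign wrong would make the $\pi^{cs}$-covering step break down.

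Finally, a minor observation: Definition~\ref{centr} does not require tangency of $\tilde{\mc N}-(\phi,0)$ to $X^{cs}$ at $0$; you prove it (via $|h|\les\epsilon^2$) but it is not needed to conclude.
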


The set $\tilde {\mc N}$ is not optimal from the point of view of scaling (one only needs half a power of decay instead of a full power), but we choose this setting to simplify the computations.

\subsection{Linear estimates} Consider a Hamiltonian of the form $H=-\Delta+V$ with a resonance at zero, where $V$ is a real-valued scalar potential on~$\R^3$.

We assume $\langle x \rangle V \in L^{3/2, 1}$. By \cite{simon} this is sufficient to guarantee the self-adjointness of $H=-\Delta+V$. It has been shown in \cite{bec} that $H$ has only finitely many negative eigenvalues.




We prove dispersive estimates for the continuous part of the spectrum, in which we have to account for the resonance as well. For simplicity, we assume in the course of the proof that $H$ has only one negative eigenvalue $-k^2$ with corresponding eigenfunction $g$, but the proof works in the same manner for any finite number of negative eigenvalues.

Our starting point is Lemma \ref{lemma24}, which provides an expansion of the Kato-Birman operator $(I+VR_0(\lambda^2))^{-1}$ at zero. We also make use of Lemma \ref{lemma_free}, which treats the case of the free evolution. This leads to the reverse Strichartz estimates of Proposition \ref{prop28}:
\begin{proposition}\lb{prop28} Assume that $\langle x \rangle V \in L^{3/2, 1}$ and that $H=-\Delta+V$ has a resonance $\phi$ at zero. Then for $t \geq 0$
$$\begin{aligned}
\frac {\sin(t \sqrt H) P_c}{\sqrt H} f(x) &= -\frac {4\pi}{\langle V, \phi\rangle^2} \phi \otimes V\phi \int_0^t\frac {\sin(s \sqrt {-\Delta})}{\sqrt {-\Delta}} f(x) \dd s + S(t) f(x)\\
\cos(t\sqrt H) P_c f(x) &= -\frac {4\pi}{\langle V, \phi\rangle^2} \phi \otimes V\phi \int_0^t \cos(s\sqrt{-\Delta}) f(x) \dd s + C(t) f(x),
\end{aligned}$$
where $S(t)$ and $C(t)$ satisfy reverse Strichartz estimates:
$$\begin{aligned}
\|S(t) f\|_{L^{6, 2}_x L^\infty_t \cap L^\infty_x L^2_t} &\les \|f\|_2;\\
\|S(t)f\|_{L^\infty_x L^1_t} &\les \|f\|_{L^{3/2, 1}};\\
\Big\|\int_0^t S(t-s) F(s) \dd s\Big\|_{L^{6, 2}_x L^\infty_t} &\les \|F\|_{L^{6/5, 2}_x L^\infty_t};\\
\Big\|\int_0^t S(t-s) F(s) \dd s\Big\|_{L^\infty_x L^2_t} &\les \|F\|_{L^{3/2, 1}_x L^2_t};\\
\Big\|\int_0^t S(t-s) F(s) \dd s\Big\|_{L^\infty_x L^1_t} &\les \|F\|_{L^{3/2, 1}_x L^1_t};\\
\|C(t) g\|_{L^{6, 2}_x L^\infty_t \cap L^\infty_x L^2_t} &\les \|g\|_{\dot H^1};\\
\|C(t) g\|_{L^\infty_x L^1_t} &\les \|g\|_{|\dl|^{-1} L^{3/2, 1}};\\
\Big\|\int_0^t C(t-s) G(s) \dd s\Big\|_{L^{6, 2}_x L^\infty_t \cap L^\infty_x L^2_t} &\les \|G\|_{L^1_t \dot H^1_x};\\
\Big\|\int_0^t C(t-s) G(s) \dd s\Big\|_{L^\infty_x L^1_t} &\les \|G\|_{L^1_t |\dl|^{-1} L^{3/2, 1}_x}.
\end{aligned}$$
\end{proposition}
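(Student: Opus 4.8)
The plan is to represent both wave propagators by Stone's formula and then reduce, via the resolvent identity, to the free evolution --- which is handled by Lemma~\ref{lemma_free} --- plus a perturbative term governed by the Birman--Schwinger operator $(I+VR_0(\lambda^2))^{-1}$, whose behaviour near $\lambda=0$ is furnished by Lemma~\ref{lemma24}. Writing $R_V^\pm(\lambda^2)=(H-\lambda^2\mp i0)^{-1}$ and $R_0^\pm(\lambda^2)=(-\Delta-\lambda^2\mp i0)^{-1}$, the spectral theorem gives for $t\ge0$
$$
\frac{\sin(t\sqrt H)P_c}{\sqrt H}\,f=\frac2\pi\int_0^\infty\sin(t\lambda)\,\Im R_V^+(\lambda^2)f\dd\lambda,\qquad
\cos(t\sqrt H)P_c\,f=\frac2\pi\int_0^\infty\lambda\cos(t\lambda)\,\Im R_V^+(\lambda^2)f\dd\lambda,
$$
and the symmetric resolvent identity $R_V^+=R_0^+-R_0^+V(I+R_0^+V)^{-1}R_0^+$ splits each integrand into a free part and a sandwiched part $R_0^+V(I+R_0^+V)^{-1}R_0^+$. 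The free part reproduces $\frac{\sin(t\sqrt{-\Delta})}{\sqrt{-\Delta}}$ and $\cos(t\sqrt{-\Delta})$; after composition with the fixed rank-one operator $\phi\otimes V\phi$ it yields only the explicit main term together with contributions already controlled by Lemma~\ref{lemma_free}, since $\phi\in L^{6,2}_x\cap L^\infty_x$ and $V\phi\in L^{3/2,1}_x\cap L^{6/5,2}_x$.

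The substance of the argument is the low-energy analysis. By Lemma~\ref{lemma24}, on an interval $\lambda\in(0,\lambda_0)$ the Birman--Schwinger inverse is of the form $(I+VR_0^+(\lambda^2))^{-1}=\frac1\lambda\,T_0+B^+(\lambda)$, where $T_0$ is a fixed rank-one operator whose range and covector are dictated by the resonance $\phi$ --- the quantity $\langle V,\phi\rangle$ in the normalization being nonzero precisely because $\phi$ is a resonance and not an eigenfunction --- and $B^+(\lambda)$ extends H\"older-continuously to $\lambda=0$ between the relevant weighted spaces. Substituting $\frac1\lambda T_0$ into the sandwiched part and using the resonance identity $R_0(0)(V\phi)\propto\phi$, the leftmost factor $R_0^+(\lambda^2)V\phi$ converges to a multiple of $\phi$ while the rightmost factor $R_0^+(\lambda^2)f$ retains its dependence on $f$; a careful evaluation of the resulting $\lambda$-integral --- in which the non-integrable weight $\frac1\lambda$ is rendered harmless by the oscillation of $\sin(t\lambda)$, respectively $\cos(t\lambda)$, e.g.\ through $\frac{\sin(t\lambda)}\lambda=\int_0^t\cos(s\lambda)\dd s$ --- collapses exactly to $\phi\otimes V\phi$ applied to $\int_0^t\frac{\sin(s\sqrt{-\Delta})}{\sqrt{-\Delta}}f\dd s$, respectively to $\int_0^t\cos(s\sqrt{-\Delta})f\dd s$, with the stated constant $-\frac{4\pi}{\langle V,\phi\rangle^2}$. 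The remaining terms --- the difference $R_0^+(\lambda^2)V\phi-\phi$, the contribution of $B^+(\lambda)$, and the smooth cutoff localising to $(0,\lambda_0)$ --- carry an extra power of $\lambda$ and are absorbed into $S(t)$ and $C(t)$.

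On the complementary range $\lambda\gtrsim\lambda_0$ there is no resonance obstruction: I would expand $(I+VR_0^+(\lambda^2))^{-1}$ in a resummed Neumann series, bounding the resolvent factors through $\langle x\rangle V\in L^{3/2,1}$ and the limiting absorption principle, and integrate by parts in $\lambda$ against $\sin(t\lambda)$, $\cos(t\lambda)$ to recover the needed decay. To obtain the listed homogeneous and inhomogeneous reverse Strichartz bounds for $S(t)$ and $C(t)$, I would then push the free-evolution estimates of Lemma~\ref{lemma_free} through the correction operators built from the regular part of the expansion; their boundedness on each of $L^{6,2}_xL^\infty_t$, $L^\infty_xL^2_t$, $L^\infty_xL^1_t$ and the corresponding Duhamel and predual spaces follows from H\"older and Young inequalities in Lorentz spaces together with the decay of $V$, as in the framework of \cite{becgol}, while the explicit rank-one term is estimated directly from the membership of $\phi$ and $V\phi$ in the relevant spaces.

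The hard part is the low-energy step: the resonance forces a genuinely non-decaying, $O(t)$-growing rank-one contribution, so it must be isolated cleanly --- using the oscillatory weight and the precise structure of $\Im R_0^+$ --- in such a way that everything left for $S(t)$ and $C(t)$ is a full power of $\lambda$ smoother and hence still dispersive. This is exactly the point at which the presence of a resonance, rather than a regular spectral point or an $L^2$ eigenvalue at zero, makes the analysis delicate, and it is the reason the estimates must be phrased in the weighted and Lorentz spaces appearing in the statement.
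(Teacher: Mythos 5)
Your proposal is a legitimate alternative outline, but it takes a genuinely different route from the paper and contains a real gap at the decisive step.

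Regarding the route: the paper does not use Stone's formula / oscillatory integrals at all. Instead it works on the Laplace-transform side, showing directly that for $\Im\lambda>0$ one has $\int_0^\infty e^{it\lambda}\,\chi_{[0,\infty)}(t)\frac{\sin(t\sqrt H)P_c}{\sqrt H}\,dt = P_cR_V(\lambda^2)$, then uses the identity $P_cR_V(\lambda^2)=P_c(I+R_0V)^{-1}R_0(\lambda^2)$ — note this has only \emph{one} free resolvent on the right, not a sandwich $R_0V(\cdots)R_0$ as in your resolvent identity — and substitutes the expansion from Lemma~\ref{lemma24}. The regular piece $Z(\lambda)$ is shown to lie in the Wiener class $\V_{L^{3,\infty}}\cap\V_{L^\infty}$, i.e.\ its inverse Fourier transform in $t$ is a finite operator-valued measure with bounded mass, and a Paley--Wiener/Liouville argument shows that measure is supported on $[0,\infty)$. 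The final step is then just an algebra property of the Wiener class composed with Lemma~\ref{lemma_free}. Your sandwich identity $R_V=R_0-R_0V(I+R_0V)^{-1}R_0$ would also work, but you then need to justify that $R_0^\pm(\lambda^2)V\phi-(-\phi)$ contributes an extra power of $\lambda$; the one-sided identity avoids this bookkeeping and is what the paper uses.

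The genuine gap is in how you propose to bound $S(t)$ and $C(t)$. Integration by parts in $\lambda$ against $\sin(t\lambda)$, $\cos(t\lambda)$ produces pointwise decay in $t$, say $t^{-n}$ for large $t$. But the reverse Strichartz estimates of the proposition are $L^\infty_xL^1_t$- and $L^\infty_xL^2_t$-type bounds, which are statements about integrability of the kernel in $t$ for fixed $x$. Pointwise decay at large $t$ neither implies control at small $t$ nor directly yields the Lorentz-space operator bounds appearing in the statement. This is precisely the problem the paper's Wiener-algebra framework is designed to solve: showing $\widehat Z\in\V_{L^{3,\infty}}\cap\V_{L^\infty}$ says exactly that the $t$-kernel has finite total variation whose mass is an operator bounded between the right Lorentz spaces, and then composing with the free propagator (Lemma~\ref{lemma_free}) via Minkowski/Young in $t$ gives all nine listed bounds at once. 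Your sketch gestures at ``the framework of \cite{becgol}'' for this step, but the integration-by-parts language in the high-energy part and the ``Hölder continuity absorbs the remainder'' language in the low-energy part do not by themselves get you into that framework; you would need to re-prove, essentially, the membership of the regular part of $(I+VR_0)^{-1}$ in the Wiener class, which is the content of Lemmas~\ref{lemma_22}, \ref{lemma23}, and the regular part of Lemma~\ref{lemma24}. Absent that, the claim that the remainder ``is absorbed into $S(t)$ and $C(t)$'' is asserted rather than demonstrated, and it is exactly this assertion that carries the weight of the proposition.

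A smaller point: your identity $\frac{\sin(t\lambda)}\lambda=\int_0^t\cos(s\lambda)\,ds$ does explain heuristically why the $\lambda^{-1}$ singularity produces a term growing linearly in $t$, but the paper extracts the explicit rank-one term cleanly by recognizing $\lambda^{-1}\phi\otimes V\phi$ (in the upper half-plane) as the Laplace transform of $\chi_{[0,\infty)}(t)$ times a constant operator, whereupon the $\int_0^t$ structure falls out of the $t$-convolution with the free propagator without any oscillatory-integral estimate.
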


\subsection{History of the problem} The study of this problem was initiated by Krieger--Schlag \cite{krisch}, who proved the existence of a codimension-one Lipschitz manifold of compactly supported radial initial data
$$
(\psi_0, \psi_1) \in H^3 \times H^2,\ \supp(\psi_0-\phi),\ \supp(\psi_1) \subset B(0, R),
$$
that lead to global-in-time solutions of the form
$$
\psi(x, t) = \phi(x, a_\infty) + v(x, t),
$$
where $|a_\infty-1| \les \|\psi_0 - \phi\|_{H^3} + \|\psi_1\|_{H^2}$, $\|v(t)\|_{L^\infty_x} \les \langle t \rangle^{-1} (\|\psi_0 - \phi\|_{H^3} + \|\psi_1\|_{H^2})$, and $v$ scatters.

One of their main results is the representation formula
\be\lb{repr}
\frac{\sin(t\sqrt H)P_c}{\sqrt H} = c_0 (\psi \otimes \psi) + \mc S(t),\ \|S(t) f\|_{L^\infty} \les t^{-1} \|f\|_{W^{1, 1}}.
\ee

More generally, the Cauchy problem for equation (\ref{wave}) was studied by Ginibre--Soffer--Velo \cite{gsv}. A number of results using alternate methods have been obtained for this equation by Kenig--Merle \cite{keme}, Duyckaerts--Merle \cite{dume}, Duyckaerts--Kenig--Merle \cite{dkm1}, \cite{dkm2}, \cite{dkm3}, Krieger--Schlag--Tataru \cite{kst}, Krieger--Nakanishi--Schlag \cite{kns1}, \cite{kns2}, for solutions of energy less than that of the soliton $\phi$ or slightly above it. The present work seeks to complement such results.

In the current paper we replace pointwise decay estimates such as (\ref{repr}) with reverse Strichartz estimates, proved in Proposition \ref{prop28}, in an approach derived from \cite{becgol} (which treats the case without eigenvalues or resonances). In order to deal with the resonance, we use the same method as in \cite{bec}, inspired by Yajima \cite{yajima_disp}.

This enables us to reduce the number of required derivatives from three in \cite{krisch} to one in the statement of Theorem \ref{main_theorem} and to prove the solution's continuous dependence on initial data. The same improvement also makes possible the centre-stable manifold result of Proposition \ref{invariant}.

It is likely that Theorem \ref{main_theorem} is optimal, as illustrated by the observation below. Proposition \ref{invariant} is not optimal from the point of view of scaling, but can be easily improved to become so.


\begin{observation} In condition \ref{cond}, consider the simplified ansatz
$$
\dot a(t) = a(t)^{5/4} \frac {4\pi}{\langle V, \partial_a \phi\rangle^2} \Big\langle \frac{\sin(t\sqrt{-\Delta})}{\sqrt{-\Delta}} \psi_1, V \partial_a \phi \Big\rangle.
$$
Note that $V \partial_a \phi = \Delta \partial_a \phi$, since $\partial_a \phi$ is a resonance. Then
$$
\int_0^\infty \Big\langle \Delta \partial_a \phi, \frac{\sin(t\sqrt{-\Delta})}{\sqrt{-\Delta}} \psi_1 \Big\rangle \dd t = \langle \Delta \partial_a \phi, (-\Delta)^{-1} \psi_1 \rangle = -\langle \partial_a \phi, \psi_1 \rangle.
$$
This suggests that $\psi_1$ needs to be taken in the Kato class $\mc K$ or in $L^{3/2, 1}$, so that this pairing is meaningful. Alternatively, the assumption that the initial data is in $\dot H^1 \times L^2$ only leads to $\dot a(t) \in L^2_t$, which is insufficient to close the loop. This shows that the conditions of Theorem \ref{main_theorem} may be optimal.
\end{observation}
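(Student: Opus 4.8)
The content of the Remark is the displayed chain of identities together with the optimality conclusion it is meant to support, so the plan is to justify the chain and then read off the consequence. First I would record why $V\partial_a\phi = \Delta\partial_a\phi$: since $\partial_a\phi$ is a zero resonance of $H=-\Delta+V$, i.e.\ $H\partial_a\phi = 0$ with $\partial_a\phi$ bounded, one has $\Delta\partial_a\phi = V\partial_a\phi$; because $\langle x\rangle V\in L^{3/2,1}$ while $\partial_a\phi$ is bounded and decays like $|x|^{-1}$, the function $V\partial_a\phi$ lies in $L^1\cap L^{3/2,1}$ and decays fast. Freezing the slowly varying prefactor $a(t)^{5/4}\,4\pi\,\langle V,\partial_a\phi\rangle^{-2}$ at $a\equiv1$, it then remains to evaluate $\int_0^\infty\langle\Delta\partial_a\phi,\frac{\sin(t\sqrt{-\Delta})}{\sqrt{-\Delta}}\psi_1\rangle\dd t$.

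For the evaluation I would use the free wave equation directly. Set $u(t):=\frac{\sin(t\sqrt{-\Delta})}{\sqrt{-\Delta}}\psi_1$, so that $u_{tt}=\Delta u$, $u(0)=0$, $u_t(0)=\psi_1$. Integrating the equation in $t$ over $[0,T]$ gives $\Delta\int_0^T u(t)\dd t = u_t(T)-\psi_1$, i.e.\ $\int_0^T u(t)\dd t = (-\Delta)^{-1}\big(\psi_1-\cos(T\sqrt{-\Delta})\psi_1\big)$. Pairing with $\Delta\partial_a\phi$, using the self-adjointness of $\Delta$ and $(-\Delta)^{-1}\Delta\partial_a\phi = -\partial_a\phi$, one obtains
$$
\int_0^T\Big\langle\Delta\partial_a\phi,\tfrac{\sin(t\sqrt{-\Delta})}{\sqrt{-\Delta}}\psi_1\Big\rangle\dd t = -\langle\partial_a\phi,\psi_1\rangle + \Big\langle\Delta\partial_a\phi,(-\Delta)^{-1}\cos(T\sqrt{-\Delta})\psi_1\Big\rangle.
$$
The boundary term vanishes as $T\to\infty$: $-\Delta$ has purely absolutely continuous spectrum, so $\cos(T\sqrt{-\Delta})\psi_1$ disperses, and pairing the dispersing factor (after moving the smoothing operator $(-\Delta)^{-1}$ onto it) against the fixed decaying function $\Delta\partial_a\phi\in L^1\cap L^{3/2,1}$ lets one conclude it tends to $0$. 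Letting $T\to\infty$ yields $\int_0^\infty\langle\Delta\partial_a\phi,\frac{\sin(t\sqrt{-\Delta})}{\sqrt{-\Delta}}\psi_1\rangle\dd t = \langle\Delta\partial_a\phi,(-\Delta)^{-1}\psi_1\rangle = -\langle\partial_a\phi,\psi_1\rangle$, which is the asserted identity. (Equivalently, one may argue by spectral calculus from $\int_0^\infty\frac{\sin(t\lambda)}{\lambda}\dd t = \lambda^{-2}$, read as an improper, Abel-summable integral.)

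It remains to explain the optimality. The quantity $\int_0^\infty\dot a(t)\dd t$ measures the total change of scale along the flow, and the bootstrap behind Theorem~\ref{main_theorem} requires $\dot a\in L^1_t$. The identity just established shows that the leading contribution of $\psi_1$ to this integral is $-\,4\pi\,\langle V,\partial_a\phi\rangle^{-2}\langle\partial_a\phi,\psi_1\rangle$; since $\partial_a\phi(x)\sim c|x|^{-1}$, this pairing is finite exactly when $\int\langle x\rangle^{-1}|\psi_1(x)|\dd x<\infty$, i.e.\ when $\psi_1$ lies in the Kato class $\mc K$ — or in $L^{3/2,1}$, which embeds suitably. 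If one only assumes $\psi_1\in L^2$, the best available statement is $\langle\Delta\partial_a\phi,\frac{\sin(t\sqrt{-\Delta})}{\sqrt{-\Delta}}\psi_1\rangle\in L^2_t$ — a local-smoothing bound for the free wave group tested against the decaying weight $\Delta\partial_a\phi$ — and membership in $L^2_t$ does not give $\dot a\in L^1_t$, so the modulation argument cannot be closed in the energy space alone. This is the sense in which the hypotheses of Theorem~\ref{main_theorem} are optimal.

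The one genuine subtlety, and the main obstacle, is that the $t$-integral defining $\int_0^\infty\frac{\sin(t\sqrt{-\Delta})}{\sqrt{-\Delta}}\psi_1\dd t$ converges only conditionally, so interchanging the time integral with the spatial pairing (and making sense of the boundary term above) is not automatic. This is precisely where the $L^\infty_xL^1_t$ reverse Strichartz estimate for the free evolution — Lemma~\ref{lemma_free}, equivalently the free part of Proposition~\ref{prop28} — enters: applied to $\frac{\sin(t\sqrt{-\Delta})}{\sqrt{-\Delta}}\psi_1$ and tested against $\Delta\partial_a\phi\in L^1$, it both legitimizes the computation and singles out $\psi_1\in L^{3/2,1}$ (or $\mc K$) as the natural hypothesis.
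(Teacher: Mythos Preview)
Your proposal is correct and matches the paper's intent: the paper treats this as a heuristic remark and offers no proof beyond the displayed formal identity $\int_0^\infty \frac{\sin(t\sqrt{-\Delta})}{\sqrt{-\Delta}}\,dt=(-\Delta)^{-1}$, together with the observation that the resulting pairing $\langle\partial_a\phi,\psi_1\rangle$ forces $\psi_1\in\mc K$ or $L^{3/2,1}$. Your integration-of-the-wave-equation argument with the boundary term and the appeal to Lemma~\ref{lemma_free} is a legitimate fleshing-out of exactly this formal computation, and your reading of the $L^2_t$ alternative and the optimality conclusion agrees with the paper.
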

\section{Proof of the results}
\subsection{Notations}
Let $R_0(\lambda): = (-\Delta - \lambda)^{-1}$ be the free resolvent corresponding to the free evolution $e^{it\Delta}$ and let $R_V(\lambda) := (-\Delta+V-\lambda)^{-1}$ be the perturbed resolvent corresponding to the perturbed evolution $e^{-itH}$. Explicitly, in three dimensions and for $\Im \lambda \geq 0$, 
\be\lb{eq_3.55}
R_0(\lambda^2)(x, y) = \frac 1 {4\pi} \frac {e^{i \lambda |x-y|}}{|x-y|}.
\ee

We denote Lorenz spaces by $L^{p, q}$, $1\leq p, q \leq \infty$ (see \cite{bergh} for their definition and properties), Sobolev spaces by $W^{s, p}$, $s \in \set R$, $1 \leq p \leq \infty$, and fix the Fourier transform to
$$
\widehat f(\eta) = (2\pi)^{-1} \int_{\set R} e^{-ix \eta} f(x) \dd x,\ f^{\vee}(x) = \int_{\set R^d} e^{i\eta x} f(\eta) \dd \eta.
$$
Also, let
\begin{list}{\labelitemi}{\leftmargin=1em}
\item[$\ast$] $\chi_A$ be the characteristic function of the set $A$;
\item[$\ast$] $\mc M$ be the space of finite-mass Borel measures on $\set R$;
\item[$\ast$] $\delta_x$ denote Dirac's measure at $x$;
\item[$\ast$] $\langle x \rangle = (1+|x|^2)^{\frac 1 2}$;
\item[$\ast$] $\B(X, Y)$ be the Banach space of bounded operators from $X$ to $Y$;
\item[$\ast$] $C$ be any constant (not always the same throughout the paper);
\item[$\ast$] $a \les b$ mean $|a| \leq C |b|$;
\item[$\ast$] $\mc S$ be the Schwartz space;
\item[$\ast$] $u \otimes v$ mean the rank-one operator $\langle \cdot, v \rangle u$.
\end{list}


\subsection{Setting up the problem}
Let us make the ansatz $\psi(x, t) = u(x, t) + \phi(x, a(t))$ and $a(0)=1$. Then $u$ satisfies the equation
$$\begin{aligned}
&\partial_t^2 u(t) + H(a(t)) u(t) = -\partial_t^2 (\phi(x, a(t))) + N(u(t), \phi(a(t))),\\
&u(0) = \psi_0 - \phi,\ \partial_t u(0) = \psi_1 - \dot a(0) \partial_a \phi,
\end{aligned}$$
where $H(a(t)) u = -\Delta u + V(a(t)) u$,
$$
V(a(t)) u = -5\phi^4(x, a(t)) = -5a(t)\phi^4(a(t)^{1/2}x, 1),
$$
and
$$
N(u, \phi) = 10 \phi^3 u^2 + 10 \phi^2 u^3 + 5 \phi u^4 + u^5.
$$

We assume that $\dot a \in L^1_t$ is small, so that $a(t)$ resides in a small neighborhood of $1$ for all $t$. The equation can then be rewritten
$$
\partial_t^2 u + H u = -\partial_t^2 (\phi(a(t))) + (V - V(a(t)) u(t) + N(u(t), \phi(a(t))),
$$
Following \cite{krisch}, letting $U: = \bpm u \\ \partial_t u \epm$, $\mc H: = \bpm 0 & 1 \\ -H & 0 \epm$, and
$$
W: = \bpm 0 \\ -\partial_t^2 (\phi(a(t))) + (V - V(a(t))) u(t) + N(u(t), \phi(a(t))) \epm,
$$
we obtain
\be\lb{eqn_mat}
\partial_t U =  \mc H U + W.
\ee
The spectrum of $\mc H$ consists of $i\R \cup \{\pm k\}$ and the eigenvectors corresponding to $\pm k(1)$ are
$$
G_\pm = (2k)^{-1/2} \bpm g \\ \pm k g \epm.
$$
The Riesz projections corresponding to $\pm k$ are, for $\ds J = \bpm 0 & 1 \\ -1 & 0 \epm$,
$$
P_\pm = \mp \langle \cdot, J G_\mp \rangle G_\pm.
$$
Applying these two projections to equation (\ref{eqn_mat}), we obtain, for $x_\pm(t) := \mp \langle U(t), J G_\mp \rangle$,
$$
\partial_t x_\pm(t) = \pm k x_\pm(t) \mp \langle W, J G_\mp \rangle.
$$
Solving this system of equations, we obtain
$$\begin{aligned}
x_\pm(t) &= e^{\pm t k} x_\pm(0) \mp \int_0^t e^{\pm (t-s) k} (2k)^{-1/2} \langle -\partial_s^2 (\phi(a(s))) + \\
&+(V - V(a(s))) u(s) + N(u(s), \phi(a(s))), g \rangle \dd s.
\end{aligned}$$
Integrating by parts,
$$\begin{aligned}
&\int_0^t e^{\pm (t-s) k} (2k)^{-1/2} \langle \partial_s^2 (\phi(x, a(s))), g \rangle \dd s = \\
&= (2k)^{-1/2} \dot a(t) \langle \partial_a \phi(a(t)), g \rangle - (2k)^{-1/2} e^{\pm t k} \dot a(0) \langle \partial_a \phi, g \rangle \pm \\
&\pm (2k)^{-1/2} \int_0^t e^{\pm (t-s) k} \dot a(s) k \langle \partial_a \phi(x, a(s)), g \rangle \dd s.
\end{aligned}$$
Taking into account the fact that $\langle \partial_a \phi, g \rangle = 0$, this leads to
\be\begin{aligned}\lb{xpm}
x_\pm(t) &= (2k)^{-1/2} e^{\pm t k} \langle k (\psi_0-\phi) \mp \psi_1, g \rangle \pm \\
&\pm (2k)^{-1/2} \dot a(t) \langle \partial_a \phi(a(t)) - a(t)^{-5/4} \partial_a \phi, g \rangle \mp\\
&\mp(2k)^{-1/2} \int_0^t e^{\pm (t-s) k} \langle \mp k \dot a(s) (\partial_a \phi(x, a(s)) - a(s)^{-5/4} \partial_a \phi) + \\
&+ (V - V(a(s))) u(s) + N(u(s), \phi(a(s))), g \rangle \dd s.
\end{aligned}\ee

The projection $P_c(1)$ on the $i\R$ component of the spectrum of $\mc H(1)$ is given by
$$\begin{aligned}
P_c \bpm f_0 \\ f_1 \epm &= \bpm f_0 \\ f_1 \epm + \frac 1 {2k} \Big\langle \bpm f_0 \\ f_1 \epm, \bpm -kg \\ -g \epm\Big\rangle \bpm g \\ k g \epm - \\
&- \frac 1 {2k} \Big\langle \bpm f_0 \\ f_1 \epm, \bpm k g \\ -g \epm\Big\rangle \bpm g \\ -kg \epm \\
&= \bpm f_0 - \langle f_0, g \rangle g \\ f_1 - \langle f_1, g \rangle g \epm.
\end{aligned}$$

With $P_c f = f - \langle f, g \rangle g$ being the projection on the continuous spectrum of $H$, the equation becomes
$$\begin{aligned}
P_c u(t) &= \cos(t\sqrt {H}) P_c (\psi_0 - \phi) + \frac{\sin(t\sqrt {H}) P_c}{\sqrt {H}} (\psi_1 - \dot a(0) \partial_a \phi) + \\
&+ \int_0^t \frac{\sin((t-s)\sqrt {H}) P_c}{\sqrt {H}} \big(-\partial_s^2 (\phi(a(s))) + (V - V(a(s))) u(s) + \\
&+N(u(s), \phi(a(s)))\big) \dd s.
\end{aligned}$$
A further computation shows that
$$\begin{aligned}
\int_0^t \frac{\sin((t-s)\sqrt {H}) P_c}{\sqrt {H}} \partial_s^2 \phi(a(s)) \dd s &= - \frac{\sin(t\sqrt {H}) P_c}{\sqrt {H}} \dot a(0) \partial_a \phi + \\
&+\int_0^t \cos((t-s)\sqrt {H}) P_c \dot a(s) \partial_a \phi(a(s)) \dd s.
\end{aligned}$$
We thus obtain
\be\begin{aligned}\lb{equ}
P_c u(t) &= \cos(t\sqrt {H}) P_c (\psi_0 - \phi) + \frac{\sin(t\sqrt {H}) P_c}{\sqrt {H}} \psi_1 + \\
&+ \int_0^t \frac{\sin((t-s)\sqrt {H}) P_c}{\sqrt {H}} \big((V - V(a(s))) u(s) + N(u(s), \phi(a(s)))\big) \dd s + \\
&+ \int_0^t \cos((t-s)\sqrt {H}) P_c \dot a(s) \partial_a \phi(a(s)) \dd s.
\end{aligned}\ee

Further note (following \cite{krisch}) that $\cos(t\sqrt H) P_c \partial_a \phi = \partial_a \phi$ and, for $a(s) \in [-1/2, 2]$,
$$
\partial_a \phi(x, a(s)) = a(s)^{-5/4} \partial_a \phi + (a(s)-1) O(\langle x \rangle^{-3}),
$$
where the difference satisfies symbol-type estimates under differentiation. Thus
$$\begin{aligned}
\int_0^t \cos((t-s)\sqrt {H}) P_c \dot a(s) \partial_a \phi(a(s)) \dd s = \int_0^t (1/a(s))^{5/4} \dot a(s) \phi \dd s + \\
+\int_0^t \cos((t-s)\sqrt {H}) P_c \dot a(s) (\partial_a \phi(a(s)) - a(s)^{-5/4}\partial_a \phi) \dd s.
\end{aligned}$$
At the same time, by Proposition \ref{prop28}, letting $\ds Q = - \frac {4\pi}{\langle V, \partial_a \phi \rangle^2} \partial_a \phi \otimes V \partial_a \phi$,
$$\begin{aligned}
\cos(t\sqrt {H}) P_c g &= C(t) P_c g + Q \int_0^t \cos(s\sqrt{-\Delta}) g \dd s,\\
\int_0^t \cos((t-s)\sqrt {H}) P_c G(s) \dd s &= \int_0^t C(t-s) P_c G(s) \dd s + \\
&+ Q \int_0^t \int_s^t \cos((\tau-s)\sqrt{-\Delta}) G(s) \dd s,
\end{aligned}$$
where $C(t)$ satisfies Strichartz estimates, and
$$\begin{aligned}
\frac{\sin(t\sqrt {H}) P_c}{\sqrt {H}} f &= S(t) P_c f + Q \int_0^t \frac{\sin(s\sqrt{-\Delta})}{\sqrt{-\Delta}} f \dd s,\\
\int_0^t \frac{\sin((t-s)\sqrt {H}) P_c}{\sqrt {H}} F(s) \dd s &=\int_0^t S(t-s) P_c F(s) \dd s + \\
&+Q \int_0^t \int_s^t \frac{\sin((\tau-s)\sqrt{-\Delta})}{\sqrt {-\Delta}} F(s) \dd\tau \dd s,
\end{aligned}$$
where $S(t)$ also satisfies Strichartz estimates.

We impose the condition that at any time $t \geq 0$ the multiples of $\partial_a \phi$ cancel each other out in (\ref{equ}). Taking the time derivative, this reduces to
\be\begin{aligned}\lb{cond}
&(1/a(t))^{5/4} \dot a(t) - \frac {4\pi}{\langle V, \partial_a \phi \rangle^2} \Big\langle \cos(t\sqrt{-\Delta}) (\psi_0 - \phi) + \frac{\sin(t\sqrt{-\Delta})}{\sqrt{-\Delta}} \psi_1 + \\
&+ \int_0^t \frac{\sin((t-s)\sqrt{-\Delta})}{\sqrt{-\Delta}} \big((V - V(a(s))) u(s) + N(u(s), \phi(a(s)))\big) \dd s + \\
&+ \int_0^t \cos((t-s)\sqrt{-\Delta}) \dot a(s) (\partial_a \phi(a(s)) - a(s)^{-5/4}\partial_a \phi) \dd s, V\partial_a \phi\Big\rangle = 0.
\end{aligned}\ee
A simpler condition arises from (\ref{cond}) at time zero: we obtain
$$
\dot a(0) = \frac {4\pi}{\langle V, \partial_a \phi \rangle^2} \langle V \partial_a \phi, u(0) \rangle.
$$
Assuming condition (\ref{cond}) holds, the equation for $u$ becomes
\be\begin{aligned}\lb{eqn_u}
u(t) &= C(t)(\psi_0 - \phi) + S(t) \psi_1 + \int_0^t S(t-s) \big((V - V(a(s))) u(s) + \\
&+ N(u, \phi(a(s)))\big) \dd s +\int_0^t C(t-s) \dot a(s) (\partial_a \phi(a(s)) - a(s)^{-5/4}\partial_a \phi) \dd s.
\end{aligned}\ee

Consider auxiliary variables $u_0$ and $a_0(t)$ and rewrite equations (\ref{xpm}) and (\ref{eqn_u}) and condition (\ref{cond}) thusly:
\be\lb{u_lin}
u = P_p u + P_c u = (2k)^{-1/2} (x_+(t) + x_-(t)) g + P_c u,
\ee
\be\begin{aligned}\lb{xpm_lin}
x_\pm(t) &= (2k)^{-1/2} e^{\pm t k} \langle k (\psi_0-\phi) \mp \psi_1, g \rangle \pm \\
&\pm (2k)^{-1/2} \dot a_0(t) \langle \partial_a \phi(a_0(t)) - a_0(t)^{-5/4}\partial_a \phi, g \rangle \mp\\
&\mp(2k)^{-1/2} \int_0^t e^{\pm (t-s) k} \langle \mp k \dot a_0(s) (\partial_a \phi(a_0(s)) - a_0(s)^{-5/4} \partial_a \phi)+ \\
&+ (V - V(a_0(s))) u_0(s) + N(u_0(s), \phi(a_0(s))), g \rangle \dd s,
\end{aligned}\ee
\be\begin{aligned}\lb{eqn_u_lin}
P_c u(t) &= C(t)(\psi_0 - \phi) + S(t) \psi_1 + \int_0^t S(t-s) \big((V - V(a_0(s))) u_0(s) + \\
& + N(u_0(s), \phi(a_0(s)))\big) \dd s +\int_0^t C(t-s) \dot a_0(s) (\partial_a \phi(a_0(s)) - a_0(s)^{-5/4} \partial_a \phi) \dd s,
\end{aligned}\ee
and
\be\begin{aligned}\lb{cond_lin}
&\dot a(t) = a_0(t)^{5/4} \frac {4\pi}{\langle V, \partial_a \phi \rangle^2} \Big\langle \cos(t\sqrt{-\Delta}) (\psi_0 - \phi) + \frac{\sin(t\sqrt{-\Delta})}{\sqrt{-\Delta}} \psi_1 + \\
&+ \int_0^t \frac{\sin((t-s)\sqrt{-\Delta})}{\sqrt{-\Delta}} \big((V - V(a_0(t))) u_0(t) + N(u_0(t), \phi(a_0(t)))\big) \dd s + \\
&+ \int_0^t \cos((t-s)\sqrt{-\Delta}) \dot a_0(s) (\partial_a \phi(a_0(s)) - a_0(s)^{-5/4} \partial_a \phi) \dd s, V \partial_a \phi\Big\rangle = 0.
\end{aligned}\ee
To this system of linear equations in $u$ and $\dot a$ we impose the further initial conditions $a(0) = a_0(0) = 1$. We then solve it by a fixed point method, whose two parts are presented in Sections \ref{stability} and \ref{contraction}.

\subsection{Stability}\lb{stability}

We first show that a ball in the complete metric space
$$
X:= \{(u, a) \mid u \in L^{6, 2}_x L^\infty_t \cap L^\infty_x L^2_t \cap L^\infty_x L^1_t,\ \dot a \in L^1 \cap L^\infty,\ a(0)=1\}
$$
with distance
$$
\|\delta u, \delta a\|_X = \|u\|_{L^{6, 2}_x L^\infty_t \cap L^\infty_x L^2_t \cap L^\infty_x L^1_t} + \|\dot a\|_{L^1 \cap L^\infty}.
$$
is stable under the action of the mapping $\Phi((u_0, a_0)):= (u, a)$, for sufficiently small $\|\psi_0-\phi\|_{|\dl|^{-1} L^{3/2, 1} \cap \dot H^1}$ and $\|\psi_1\|_{L^{3/2, 1} \cap L^2}$.

\begin{proposition}\lb{prop_stab} There exists $\epsilon_0$ such that for any $\epsilon<\epsilon_0$ and whenever $\|(u_0, a_0)\|_X < \epsilon$ and $\|\psi_0-\phi\|_{|\dl|^{-1} L^{3/2, 1} \cap \dot H^1} + \|\psi_1\|_{L^{3/2, 1} \cap L^2} < c \epsilon$ and $\langle k(\psi_0 - \phi) - \psi_1, g \rangle = 0$, there exists a unique $h:=h(u_0, a_0) \les \epsilon^2$ such that the system (\ref{u_lin}-\ref{cond_lin}), with initial data $\tilde \psi_0 = \psi_0 + hg$, $\tilde \psi_1 = \psi_1 - h k g$ admits a solution $(u, a) \in X$ with $\|(u, a) - (0, 1)\|_X < \epsilon$.
\end{proposition}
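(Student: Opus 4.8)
The plan is a modulation (Lyapunov--Schmidt) argument. Since $\mc H$ carries the unstable eigenvalue $+k$, the map $\Phi$ lands in $X$ only after the data are corrected along the eigendirection $g$ so as to suppress the exponentially growing mode $x_+$ in (\ref{xpm_lin}); this correction is precisely $h=h(u_0,a_0)$. Once $h$ is fixed we estimate the three pieces $P_pu$, $P_cu$ and $\dot a$ of $\Phi(u_0,a_0)$ separately, using Proposition \ref{prop28} and its free counterpart (Lemma \ref{lemma_free}) together with H\"older in the mixed Lorentz norms.

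\emph{Choice of $h$.} Set $\tilde\psi_0=\psi_0+hg$, $\tilde\psi_1=\psi_1-hkg$ and
$$
\begin{aligned}
n_0(s):={}&-k\dot a_0(s)\bigl(\partial_a\phi(a_0(s))-a_0(s)^{-5/4}\partial_a\phi\bigr)\\
&+(V-V(a_0(s)))u_0(s)+N(u_0(s),\phi(a_0(s))).
\end{aligned}
$$
Because $a_0(0)=1$, the only $h$-dependence in the $x_+$ equation (\ref{xpm_lin}) sits in the factor $\langle k(\tilde\psi_0-\phi)-\tilde\psi_1,g\rangle=\langle k(\psi_0-\phi)-\psi_1,g\rangle+2kh\|g\|_2^2=2kh\|g\|_2^2$, where the orthogonality hypothesis was used; thus $x_+(t)$ equals $(2k)^{-1/2}e^{tk}$ times the bracket $2kh\|g\|_2^2-\int_0^t e^{-sk}\langle n_0(s),g\rangle\dd s$, plus a term bounded by $\les|\dot a_0(t)|\,|a_0(t)-1|$. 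Membership $(u,a)\in X$ forces $x_+\in L^\infty_t$, so the bracket must tend to $0$, which is equivalent to
$$
h=\frac{1}{2k\|g\|_2^2}\int_0^\infty e^{-sk}\langle n_0(s),g\rangle\dd s.
$$
The right side depends only on $(u_0,a_0)$, so $h$ exists and is unique. Since $g$ decays exponentially, $|a_0(s)-1|\le\|\dot a_0\|_{L^1_t}\les\epsilon$, $N$ is at least quadratic in $u_0$, and $u_0\in L^{6,2}_x L^\infty_t\cap L^\infty_x L^1_t$ (the $\int_0^\infty\|u_0(s)\|^2\dd s$-type terms via $\int|u_0|^2\le\|u_0(x,\cdot)\|_{L^\infty_s}\|u_0(x,\cdot)\|_{L^1_s}$ pointwise in $x$, then H\"older in $x$ using the decay of $\phi,V,g$), one gets $|h|\les\epsilon^2$; it is the codimension-one condition that upgrades the generic bound $|h|\les\epsilon$ to $|h|\les\epsilon^2$. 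Finally $\|\tilde\psi_0-\phi\|_{\dot H^1\cap|\dl|^{-1}L^{3/2,1}}+\|\tilde\psi_1\|_{L^2\cap L^{3/2,1}}\les c\epsilon$, since $g$ lies in all these spaces.

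\emph{The pieces $P_pu$ and $P_cu$.} With this $h$, $x_+(t)=-(2k)^{-1/2}\int_t^\infty e^{(t-s)k}\langle n_0(s),g\rangle\dd s$ plus the harmless $\dot a_0$-term, while $x_-$ is the forward formula (\ref{xpm_lin}) in which $e^{-(t-s)k}$ is a contraction on $0\le s\le t$. The same pointwise-in-$x$ H\"older estimates give $\|\langle n_0(\cdot),g\rangle\|_{L^1_t\cap L^\infty_t}\les\epsilon^2$, so Young's inequality in $t$ together with the data bound $|\langle k(\tilde\psi_0-\phi)\pm\tilde\psi_1,g\rangle|\les c\epsilon$ gives $\|x_\pm\|_{L^1_t\cap L^\infty_t}\les c\epsilon+C\epsilon^2$; interpolation supplies the $L^2_t$ bound, and since $g\in L^{6,2}_x\cap L^\infty_x$ we obtain $\|P_pu\|_{L^{6,2}_x L^\infty_t\cap L^\infty_x L^2_t\cap L^\infty_x L^1_t}\les c\epsilon+C\epsilon^2$. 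For $P_cu$ apply Proposition \ref{prop28} to (\ref{eqn_u_lin}). The linear terms obey $\|C(t)(\tilde\psi_0-\phi)\|_{L^{6,2}_x L^\infty_t\cap L^\infty_x L^2_t\cap L^\infty_x L^1_t}\les\|\tilde\psi_0-\phi\|_{\dot H^1\cap|\dl|^{-1}L^{3/2,1}}$, and likewise for $S(t)\tilde\psi_1$, whence $\les c\epsilon$. Next, $(V-V(a_0))u_0$ is $O(|a_0-1|)$ times a rapidly decaying profile, hence bounded by $\les\|a_0-1\|_{L^\infty_t}\|u_0\|_X\les\epsilon^2$ in $L^{6/5,2}_x L^\infty_t\cap L^{3/2,1}_x L^2_t\cap L^{3/2,1}_x L^1_t$; the nonlinearity $N=10\phi^3u_0^2+10\phi^2u_0^3+5\phi u_0^4+u_0^5$ is treated term by term, the lower-order ones using the decay of $\phi$ (e.g. $\phi^3\in L^2_x$) with $u_0\in L^{6,2}_x L^\infty_t$, and the critical $u_0^5$ placed in the same three source norms by interpolating the three norms defining $X$ --- which is exactly why $X$ carries these three norms --- so every contribution is $\les\|u_0\|_X^2\les\epsilon^2$. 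The $\dot a_0$-correction is handled by the $C$-Duhamel bounds of Proposition \ref{prop28} with $\|\dot a_0(\cdot)(\partial_a\phi(a_0)-a_0^{-5/4}\partial_a\phi)\|_{L^1_t(\dot H^1\cap|\dl|^{-1}L^{3/2,1})_x}\les\|a_0-1\|_{L^\infty_t}\|\dot a_0\|_{L^1_t}\les\epsilon^2$. Hence $\|P_cu\|_X\les c\epsilon+C\epsilon^2$.

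\emph{The modulation parameter and conclusion.} In (\ref{cond_lin}) the contributions of $(V-V(a_0))u_0$, $N(u_0,\phi(a_0))$ and the $\dot a_0$-correction, paired against the rapidly decaying $V\partial_a\phi=\Delta\partial_a\phi$, are estimated as before and are $\les\epsilon^2$ in $L^1_t\cap L^\infty_t$. The delicate term is the linear one $\langle\cos(t\sqrt{-\Delta})(\tilde\psi_0-\phi)+\tfrac{\sin(t\sqrt{-\Delta})}{\sqrt{-\Delta}}\tilde\psi_1,V\partial_a\phi\rangle$: its $L^\infty_t$ norm is $\les\|\tilde\psi_0-\phi\|_{\dot H^1}+\|\tilde\psi_1\|_{L^2}$ by the free energy estimate together with $V\partial_a\phi\in L^{6/5}_x$, while its $L^1_t$ norm is $\les\|\tilde\psi_0-\phi\|_{|\dl|^{-1}L^{3/2,1}}+\|\tilde\psi_1\|_{L^{3/2,1}}$ via the identity $V\partial_a\phi=\Delta\partial_a\phi$ and the free reverse Strichartz estimate of Lemma \ref{lemma_free} (this is the step that forces the Lorentz hypotheses on the data, cf. the optimality remark). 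Thus $\|\dot a\|_{L^1_t\cap L^\infty_t}\les c\epsilon+C\epsilon^2$, and in particular $\|a-1\|_{L^\infty_t}$ is small enough that $a(t)$ remains in the range where the symbol estimates for $\partial_a\phi(a)-a^{-5/4}\partial_a\phi$ are valid. Collecting the three bounds, $\|(u,a)-(0,1)\|_X\le C_1c\epsilon+C_2\epsilon^2$; choosing first $c<1/(2C_1)$ and then $\epsilon_0<1/(2C_2)$, both independent of the data, yields $\|(u,a)-(0,1)\|_X<\epsilon$, which is the assertion. The main obstacle is this last paragraph's $L^1_t$ bound on $\dot a$ --- the only point where the algebraic resonance identity $V\partial_a\phi=\Delta\partial_a\phi$ is essential and where the $L^{3/2,1}$ hypothesis on $\psi_1$ is sharp --- together with the mixed-Lorentz-norm bookkeeping that absorbs the energy-critical term $u_0^5$.
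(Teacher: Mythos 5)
Your proposal follows essentially the same route as the paper: choose $h$ by requiring the coefficient of the $e^{tk}$ growth in $x_+$ to vanish, then estimate $P_pu$, $P_cu$, and $\dot a$ separately using the reverse Strichartz estimates (Proposition~\ref{prop28} and Lemma~\ref{lemma_free}) together with H\"older products in the mixed Lorentz norms. Both the mechanism for $h$ (projecting the full Duhamel integral of the nonlinear source onto $g$ and rearranging $\int_0^t=\int_0^\infty-\int_t^\infty$) and the bookkeeping for the $O(\epsilon^2)$ source terms match the paper's proof, including the splitting of the three source norms $L^{6/5,2}_xL^\infty_t\cap L^{3/2,1}_xL^2_t\cap L^{3/2,1}_xL^1_t$ and the use of the interpolated $L^2_t$ bound for $P_pu$.

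One small inaccuracy: you attribute the $L^1_t$ bound on the linear term in~(\ref{cond_lin}) to the algebraic identity $V\partial_a\phi=\Delta\partial_a\phi$. That identity is what underlies the optimality remark in the introduction, but it is not actually invoked at this point; all that is used here is that $V\partial_a\phi\in L^1_x$ (respectively $L^{6/5}_x$), which follows directly from the decay $V\partial_a\phi\sim|x|^{-5}$. The bound then comes from $\|\cos(t\sqrt{-\Delta})(\tilde\psi_0-\phi)\|_{L^\infty_xL^1_t}\les\|\tilde\psi_0-\phi\|_{|\dl|^{-1}L^{3/2,1}}$ of Lemma~\ref{lemma_free} and duality in $x$. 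This does not affect the correctness of the argument, only the attribution.

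Also, your summary estimate $\|x_\pm\|_{L^1_t\cap L^\infty_t}\les c\epsilon+C\epsilon^2$ conflates the two modes: after the choice of $h$, the initial-data contribution to $x_+$ cancels exactly (it equals $2hk\langle g,g\rangle=O(\epsilon^2)$), so $\|x_+\|\les\epsilon^2$, while the $c\epsilon$ term only appears in $x_-$. Again harmless for the closing argument, but worth separating, as the paper does.
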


To begin with, assume that $\|(u_0, a_0) - (0, 1)\|_X < \epsilon<\epsilon_0$, with $\epsilon_0$ to be specified later such that in any case $\epsilon_0<1/2$. Then $a_0(t) \in (1/2, 3/2)$. 
Recall that under these circumstances
$$
|\partial_a \phi(a_0(t)) - a_0(t)^{-5/4} \partial_a \phi| \les O(\langle x \rangle^{-3}) |a_0(t)-1| \les O(\langle x \rangle^{-3}) \|\dot a_0\|_{L^1}
$$
and the difference satisfies symbol-type estimates. Thus
$$\begin{aligned}
&\|\dot a_0(t) (\partial_a \phi(a_0(t)) - a_0(t)^{-5/4} \partial_a \phi)\|_{L^1_t |\dl|^{-1} L^{3/2, 1}_x \cap L^1_t \dot H^1_x  \cap L^\infty_t |\dl|^{-1} L^{3/2, 1}_x \cap L^\infty_t \dot H^1_x} \les \\
&\les \|\dot a_0\|_{L^1 \cap L^\infty} \|\dot a_0\|_{L^1} \les \epsilon^2.
\end{aligned}$$

Along the same lines, observe that
$$
|\phi(a_0(t)) - \phi| \leq \Big|\int_1^{a_0(t)} |\partial_a \phi(a)| \dd a\Big| \leq \Big|\int_1^{1\pm\|\dot a_0\|_1} |\partial_a \phi(a)| \dd a\Big|
$$
pointwise and $\ds\Big\|\int_1^{1\pm\|\dot a_0\|_1} |\partial_a \phi(a)| \dd a\Big\|_{\dot H^1} \les \|\dot a_0\|_1 < 1/2$. Thus $\|\phi(a_0(t))\|_{L^{6, 2}_x L^\infty_t}$ is bounded by a constant.

Furthermore
$$
|V(a_0(t)) - V| \leq \Big|\int_1^{a_0(t)} |\partial_a V(a)| \dd a\Big| \leq \Big|\int_1^{1\pm\|\dot a_0\|_1} |\partial_a V(a)| \dd a\Big|
$$
pointwise and $\ds\Big\|\int_1^{1\pm\|\dot a_0\|_1} |\partial_a V(a)| \dd a\Big\|_{L^1 \cap L^\infty} \les \|\dot a_0\|_1$. Thus
$$
\|V(a_0(t)) - V\|_{L^1_x L^\infty_t \cap L^\infty_{x, t}} \les \|\dot a_0\|_1.
$$
Therefore
$$\begin{aligned}
\|(V(a_0(t))-V) u_0(t)\|_{L^{6/5, 2}_x L^\infty_t} &\les \|V(a_0(t)) - V\|_{L^{3/2, \infty}_x L^\infty_t} \|u_0\|_{L^{6, 2}_x L^\infty_t} \\
&\les \|\dot a_0\|_1 \|u_0\|_{L^{6, 2}_x L^\infty_t} \les \epsilon^2;\\
\|(V(a_0(t))-V) u_0(t)\|_{L^{3/2, 1}_x L^2_t} &\les \|V(a_0(t)) - V\|_{L^{3/2, 1}_x L^\infty_t \cap L^\infty_{x, t}} \|u_0\|_{L^\infty_x L^2_t} \\
&\les \|\dot a_0\|_1 \|u_0\|_{L^\infty_x L^2_t} \les \epsilon^2;\\
\|(V(a_0(t))-V) u_0(t)\|_{L^{3/2, 1}_x L^1_t} &\les \|V(a_0(t)) - V\|_{L^{3/2, 1}_x L^\infty_t \cap L^\infty_{x, t}} \|u_0\|_{L^\infty_x L^1_t} \\
&\les \|\dot a_0\|_1 \|u_0\|_{L^\infty_x L^1_t} \les \epsilon^2.
\end{aligned}$$
We obtain
$$
\|(V(a_0(t))-V) u_0(t)\|_{L^{6/5, 2}_x L^\infty_t \cap L^{3/2, 1}_x L^2_t \cap L^{3/2, 1}_x L^1_t} \les \epsilon^2.
$$

Next, we examine the nonlinear term $N(u_0(t), \phi(a_0(t)))$. Note that
$$\begin{aligned}
\|u_0^5\|_{L^{6/5, 2}_x L^\infty_t} &\les \|u_0\|_{L^{6, 2}_x L^\infty_t}^5 \les \epsilon^5,\\
\|u_0^5\|_{L^{3/2, 1}_x L^2_t} &\les \|u\|_{L^{6, 2}_x L^\infty_t}^4 \|u\|_{L^\infty_x L^2_t} \les \epsilon^5,\\
\|u_0^5\|_{L^{3/2, 1}_x L^1_t} &\les \|u\|_{L^{6, 2}_x L^\infty_t}^4 \|u\|_{L^\infty_x L^1_t} \les \epsilon^5
\end{aligned}
$$
and
$$\begin{aligned}
\|\phi(a_0(t))^3 u_0(t)^2\|_{L^{6/5, 2}_x L^\infty_t} &\les \|\phi(a_0(t))\|_{L^{6, 2}_x L^\infty_t}^3 \|u_0\|_{L^{6, 2}_x L^\infty_t}^2 \les \|u_0\|_{L^{6, 2}_x L^\infty_t}^2 \les \epsilon^2,\\
\|\phi(a_0(t))^3 u_0(t)^2\|_{L^{3/2, 1}_x L^2_t} &\les \|\phi(a_0(t))\|_{L^{6, 2}_x L^\infty_t}^3 \|u_0\|_{L^{6, 2}_x L^\infty_t} \|u_0\|_{L^\infty_x L^2_t} \\
&\les \|u_0\|_{L^{6, 2}_x L^\infty_t} \|u_0\|_{L^\infty_x L^2_t} \les \epsilon^2,\\
\|\phi(a_0(t))^3 u_0(t)^2\|_{L^{3/2, 1}_x L^1_t} &\les \|\phi(a_0(t))\|_{L^{6, 2}_x L^\infty_t}^3 \|u_0\|_{L^{6, 2}_x L^\infty_t} \|u_0\|_{L^\infty_x L^1_t} \\
&\les \|u_0\|_{L^{6, 2}_x L^\infty_t} \|u_0\|_{L^\infty_x L^1_t} \les \epsilon^2.
\end{aligned}$$
The other terms in $N(u_0(t), \phi(a_0(t)))$ can be treated in the same manner. Overall we obtain
$$
\|N(u_0(t), \phi(a_0(t)))\|_{L^{6/5, 2}_x L^\infty_t \cap L^{3/2, 1}_x L^2_t \cap L^{3/2, 1}_x L^1_t} \les \epsilon^2.
$$

We solve the equation with $\tilde \psi_0$, $\tilde \psi_1$ as initial data. Concerning the projection on the discrete spectrum, note that $k(\tilde \psi_0 - \phi) + \tilde \psi_1 = k(\psi_0 - \phi) + \psi_1$ and for any $1< p \leq \infty$
$$\begin{aligned}
\|x_-\|_{L^1 \cap L^\infty} &\les \|\psi_0 - \phi\|_{|\dl|^{-1} L^{3/2, 1} \cap \dot H^1} + \|\psi_1\|_{L^{3/2, 1} \cap L^2} + \\
&+\|\dot a_0\|_{L^1 \cap L^\infty} \|\partial_a \phi(a_0(t)) - a_0(t)^{-5/4} \partial_a \phi\|_{L^\infty_t L^p_x} +\\
&+ \|(V-V(a_0(t))) u_0(t)\|_{L^{3/2, 1}_x L^1_t} + \|N(u_0(t), \phi(a_0(t)))\|_{L^{3/2, 1}_x L^1_t} \\
&\les \|\psi_0 - \phi\|_{|\dl|^{-1} L^{3/2, 1} \cap \dot H^1} + \|\psi_1\|_{L^{3/2, 1} \cap L^2} + \epsilon^2.
\end{aligned}$$
Regarding $x_+$, note that $k(\tilde \psi_0 - \phi) - \tilde \psi_1 = 2hkg$ and rewrite (\ref{xpm_lin}) as
$$\begin{aligned}
x_+(t) &= (2k)^{-1/2} e^{t k} \Big(2h \langle k g, g \rangle - \int_0^\infty e^{-s k} \langle - k \dot a_0(s) (\partial_a \phi(a_0(s)) - a_0(s)^{-5/4} \partial_a \phi)+ \\
&+ (V - V(a_0(s))) u_0(s) + N(u_0(s), \phi(a_0(s))), g \rangle \dd s\Big) + \\
&+ (2k)^{-1/2} \dot a_0(t) \langle \partial_a \phi(a_0(t)) - a_0(t)^{-5/4}\partial_a \phi, g \rangle +\\
&+ (2k)^{-1/2} \int_t^\infty e^{(t-s) k} \langle - k \dot a_0(s) (\partial_a \phi(a_0(s)) - \partial_a \phi)+ \\
&+ (V - V(a_0(s))) u_0(s) + N(u_0(s), \phi(a_0(s))), g \rangle \dd s.
\end{aligned}$$
Note that
$$\begin{aligned}
&\Big\|(2k)^{-1/2} \dot a_0(t) \langle \partial_a \phi(a_0(t)) - a_0(t)^{-5/4}\partial_a \phi, g \rangle +\\
&+ (2k)^{-1/2} \int_t^\infty e^{(t-s) k} \langle - k \dot a_0(s) (\partial_a \phi(a_0(s)) - a_0(s)^{-5/4} \partial_a \phi)+ \\
&+ (V - V(a_0(s))) u_0(s) + N(u_0(s), \phi(a_0(s))), g \rangle \dd s \Big\|_{L^1_t \cap L^\infty_t} \les \\
& \les \|\dot a_0\|_{L^1 \cap L^\infty} \|\partial_a \phi(a_0(t)) - a_0(t)^{-5/4} \partial_a \phi\|_{L^\infty_t L^p_x} +\\
&+ \|(V-V(a_0(t))) u_0(t)\|_{L^{3/2, 1}_x L^1_t} + \|N(u_0(t), \phi(a_0(t)))\|_{L^{3/2, 1}_x L^1_t} \les \epsilon^2.
\end{aligned}$$
Thus $x_+ \in L^1 \cap L^\infty$ if and only if
$$\begin{aligned}
2kh \langle g, g \rangle &= \int_0^\infty e^{-s k} \langle - k \dot a_0(s) (\partial_a \phi(a_0(s)) - a_0(s)^{-5/4} \partial_a \phi)+ \\
&+ (V - V(a_0(s))) u_0(s) + N(u_0(s), \phi(a_0(s))), g \rangle \dd s.
\end{aligned}$$
This determines a unique value of $h=h(u_0, a_0)$ and computations along the same lines as above show that $|h| \les \epsilon^2$. Combining the estimates for $x_+$ and $x_-$ we obtain that for this unique value of $h$
$$
\|P_p u\|_{L^{6, 2}_x L^\infty_t \cap L^\infty_x L^2_t \cap L^\infty_x L^1_t} \les \|\psi_0 - \phi\|_{|\dl|^{-1} L^{3/2, 1} \cap \dot H^1} + \|\psi_1\|_{L^{3/2, 1} \cap L^2} + \epsilon^2.
$$

Concerning $P_c u$, note that $P_c(\tilde \psi_0-\phi) = P_c(\psi_0-\phi)$ and $P_c \tilde \psi_1 = \psi_1$. By the Strichartz estimates of Proposition \ref{prop28} we obtain that
$$\begin{aligned}
&\|P_c u\|_{L^{6, 2}_x L^\infty_t \cap L^\infty_x L^2_t \cap L^\infty_x L^1_t} \les \|\psi_0-\phi\|_{|\dl|^{-1} L^{3/2, 1} \cap \dot H^1} + \|\psi_1\|_{L^{3/2, 1} \cap L^2} + \\
&+ \|(V - V(a_0(t))) u_0(t) + N(u_0(t), \phi(a_0(t)))\|_{L^{6/5, 2}_x L^\infty_t \cap L^{3/2, 1}_x L^2_t \cap L^{3/2, 1}_x L^1_t} + \\
&+ \|\dot a_0(t) (\partial_a \phi(a_0(t)) - a_0(t)^{-5/4}\partial_a \phi)\|_{L^1_t |\dl|^{-1} L^{3/2, 1}_x \cap L^1_t \dot H^1_x} \\
& \les \|\psi_0 - \phi\|_{|\dl|^{-1} L^{3/2, 1} \cap \dot H^1} + \|\psi_1\|_{L^{3/2, 1} \cap L^2} + \epsilon^2.
\end{aligned}$$

Finally, from (\ref{cond_lin}) and Lemma \ref{lemma_free} we likewise obtain
$$\begin{aligned}
\|\dot a\|_{L^1} &\les \|\psi_0-\phi\|_{|\dl|^{-1} L^{3/2, 1}} + \|\psi_1\|_{L^{3/2, 1}} + \\
&+ \|(V - V(a_0(t))) u_0(t) + N(u_0(t), \phi(a_0(t)))\|_{L^{3/2, 1}_x L^1_t} + \\
&+ \|\dot a_0(t) (\partial_a \phi(a_0(t)) - a_0(t)^{-5/4}\partial_a \phi)\|_{L^1_t |\dl|^{-1} L^{3/2, 1}_x} \\
& \les \|\psi_0 - \phi\|_{|\dl|^{-1} L^{3/2, 1} \cap \dot H^1} + \|\psi_1\|_{L^{3/2, 1} \cap L^2} + \epsilon^2.
\end{aligned}$$

Putting together the previous estimates, we arrive at
$$
\|(u, a) - (0, 1)\|_X \les \|\psi_0 - \phi\|_{|\dl|^{-1} L^{3/2, 1} \cap \dot H^1} + \|\psi_1\|_{L^{3/2, 1} \cap L^2} + \epsilon^2.
$$
By making $\|\psi_0 - \phi\|_{|\dl|^{-1} L^{3/2, 1} \cap \dot H^1} + \|\psi_1\|_{L^{3/2, 1} \cap L^2}$ less than $c \epsilon$, for sufficiently small $\epsilon$ we arrive at $\|(u, a) - (0, 1)\|_X < \epsilon$.

\subsection{Contraction}\lb{contraction}

We next show that the mapping $(u_0, a_0) \mapsto (u, a)$ is a contraction within a sufficiently small sphere.
\begin{proposition}\lb{prop_cont}
Consider two bounded solutions $(u_0^1, a_0^1) \mapsto (u^1, a^1)$, $(u_0^2, a_0^2) \mapsto (u^2, a^2)$ of the system (\ref{u_lin}--\ref{cond_lin}) with initial data $(\psi_0^j + h(u_0^j, a_0^j)g , \psi_1^j - h(u_0^j, a_0^j)kg)$, $1 \leq j \leq 2$, such that
$$
\|\psi_0^j\|_{|\dl|^{-1} L^{3/2, 1} \cap \dot H^1} + \|\psi_1^j\|_{L^{3/2, 1} \cap L^2} < c \epsilon_1 < c/2,\ 1 \leq j \leq 2,
$$
and $\|(u_0^j, a_0^j) - (0, 1)\|_X < \epsilon_1<1/2$, $1 \leq j \leq 2$. Then
$$\begin{aligned}
\|(u^1, a^1) - (u^2, a^2)\|_X &\les \epsilon_1 \|(u_0^1, a_0^1) - (u_0^2, a_0^2)\|_X + \\
&+\|\psi_0^1-\psi_0^2\|_{|\dl|^{-1} L^{3/2, 1} \cap \dot H^1} + \|\psi_1^1-\psi_1^2\|_{L^{3/2, 1} \cap L^2}
\end{aligned}$$
and
$$
|h(u_0^1, a_0^1)-h(u_0^2, a_0^2)| \les \epsilon_1 \|(u_0^1, a_0^1) - (u_0^2, a_0^2)\|_X.
$$
\end{proposition}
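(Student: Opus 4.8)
The plan is to re-run the fixed-point estimates of Section \ref{stability}, applied now to the differences of the two solutions, exploiting that every source term is quadratic in the inputs. Write $\delta u_0 = u_0^1 - u_0^2$, $\delta a_0 = a_0^1 - a_0^2$, $\delta\psi_0 = \psi_0^1 - \psi_0^2$, $\delta\psi_1 = \psi_1^1 - \psi_1^2$, and similarly $\delta u, \delta a, \delta x_\pm$ for the outputs, and subtract the two copies of (\ref{u_lin}--\ref{cond_lin}). The first point I would make is that the parameter $h$ never appears in the equations for the output difference: since $P_c g = 0$, the initial-data terms in (\ref{eqn_u_lin}) reduce to $C(t)P_c\,\delta\psi_0 + S(t)\,\delta\psi_1$; the data term of $x_-$ is $\langle k(\psi_0^j - \phi) + \psi_1^j, g\rangle e^{-tk}$, in which the $h$-contributions of $\tilde\psi_0^j, \tilde\psi_1^j$ cancel; and, once $h = h(u_0^j, a_0^j)$ is chosen as in Section \ref{stability} to annihilate the exponentially growing part of $x_+$, what remains of $x_+(t)$ is purely the quadratic tail $(2k)^{-1/2}\dot a_0(t)\langle\partial_a\phi(a_0(t)) - a_0(t)^{-5/4}\partial_a\phi, g\rangle + (2k)^{-1/2}\int_t^\infty e^{(t-s)k}\langle\cdots\rangle\,ds$, again with no residual $h$. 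Thus $\delta u, \delta a$ are produced by the same linear operators $C(t), S(t)$, the free wave group, and $e^{\pm tk}$ acting on $\delta\psi_0, \delta\psi_1$ together with differences of the source terms, and $h(u_0^1, a_0^1) - h(u_0^2, a_0^2)$ can be estimated separately at the end.

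For the source differences, the mechanism is that each of
$$
(V - V(a_0(s)))\,u_0(s),\quad N(u_0(s), \phi(a_0(s))),\quad \dot a_0(s)\,(\partial_a\phi(a_0(s)) - a_0(s)^{-5/4}\partial_a\phi)
$$
vanishes to second order at $(u_0, a_0) = (0,1)$, so that after subtraction and telescoping each resulting term carries a factor bounded by $\epsilon_1$ times a factor bounded by $\|(u_0^1, a_0^1) - (u_0^2, a_0^2)\|_X$. For the potential term I would write $(V - V(a_0^1))u_0^1 - (V - V(a_0^2))u_0^2 = (V - V(a_0^1))\,\delta u_0 + (V(a_0^2) - V(a_0^1))\,u_0^2$, use $\|V - V(a_0^1)\|_{L^{3/2,\infty}_x L^\infty_t \cap L^\infty_{x,t}} \les \|\dot a_0^1\|_{L^1} \les \epsilon_1$ and the mean-value bound $\|V(a_0^1) - V(a_0^2)\|_{L^{3/2,1}_x L^\infty_t \cap L^\infty_{x,t}} \les \|\delta\dot a_0\|_{L^1}$, and conclude as in Section \ref{stability} that the difference is $\les \epsilon_1\|(u_0^1,a_0^1) - (u_0^2,a_0^2)\|_X$ in $L^{6/5,2}_x L^\infty_t \cap L^{3/2,1}_x L^2_t \cap L^{3/2,1}_x L^1_t$. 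For $N$ I would telescope each monomial $\phi^3u^2, \phi^2u^3, \phi u^4, u^5$: extracting a factor $\delta u_0$ leaves at least one remaining $u_0$, hence a factor $O(\epsilon_1)$; extracting $\phi(a_0^1) - \phi(a_0^2)$, whose relevant norm is $\les \|\delta\dot a_0\|_{L^1}$ by the pointwise bound of Section \ref{stability}, leaves at least two factors of $u_0$, hence $O(\epsilon_1^2)$; the $O(1)$ powers of $\phi(a_0)$ are harmless. The modulation term is handled the same way, via $\dot a_0^1 f(a_0^1) - \dot a_0^2 f(a_0^2) = \delta\dot a_0\, f(a_0^1) + \dot a_0^2\,(f(a_0^1) - f(a_0^2))$ with $f(a) = \partial_a\phi(a) - a^{-5/4}\partial_a\phi$, together with the pointwise and symbol-type estimates $|f(a_0^j(t))| \les \langle x\rangle^{-3}\epsilon_1$ and $|f(a_0^1(t)) - f(a_0^2(t))| \les \langle x\rangle^{-3}\|\delta\dot a_0\|_{L^1}$.

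Feeding these bounds into the reverse Strichartz estimates of Proposition \ref{prop28} for $P_c\,\delta u$, into Lemma \ref{lemma_free} applied to (\ref{cond_lin}) for $\delta\dot a$ in $L^1_t \cap L^\infty_t$, and into the elementary convolution bounds with $e^{\pm tk}$ for $\delta x_\pm$ (recalling that $\delta x_+$ comes only from the quadratic tail), and using $\|C(t)\delta\psi_0\|, \|S(t)\delta\psi_1\|$ controlled by $\|\delta\psi_0\|_{|\dl|^{-1}L^{3/2,1}\cap\dot H^1} + \|\delta\psi_1\|_{L^{3/2,1}\cap L^2}$, one obtains
$$
\|(u^1, a^1) - (u^2, a^2)\|_X \les \|\delta\psi_0\|_{|\dl|^{-1}L^{3/2,1}\cap\dot H^1} + \|\delta\psi_1\|_{L^{3/2,1}\cap L^2} + \epsilon_1\|(u_0^1,a_0^1) - (u_0^2,a_0^2)\|_X ,
$$
which is the first assertion. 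For the second, I would subtract the two instances of the relation defining $h$, namely $2kh\langle g,g\rangle = \int_0^\infty e^{-sk}\langle -k\dot a_0(s)(\partial_a\phi(a_0(s)) - a_0(s)^{-5/4}\partial_a\phi) + (V-V(a_0(s)))u_0(s) + N(u_0(s),\phi(a_0(s))), g\rangle\,ds$; the right side is the same combination of quadratic sources paired against the fixed exponentially decaying profile $e^{-sk}g$, so the estimates of the previous paragraph give directly $|h(u_0^1,a_0^1) - h(u_0^2,a_0^2)| \les \epsilon_1\|(u_0^1,a_0^1) - (u_0^2,a_0^2)\|_X$, with no $\|\delta\psi_0\|$ or $\|\delta\psi_1\|$ term precisely because the orthogonality condition $\langle k(\psi_0^j - \phi) - \psi_1^j, g\rangle = 0$ leaves $h$ free of any linear-in-data contribution.

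There is no deep obstacle here — the argument is a routine contraction mirroring Section \ref{stability} — and the only point requiring care is the telescoping bookkeeping: one must check that in every difference a genuine factor $O(\epsilon_1)$ is extracted alongside the difference of inputs, rather than an $O(1)$ factor inherited from $\phi(a_0)$, and that the linear-in-data quantities $\|\delta\psi_0\|, \|\delta\psi_1\|$ enter only the bound for $(u,a)$ and not the bound for $h$. This is exactly what makes $\Phi$ a contraction on a small ball of $X$ and $h$ a Lipschitz function of $(u_0, a_0)$, which together with the stability of Proposition \ref{prop_stab} will close the fixed-point argument.
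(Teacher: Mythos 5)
Your proof is correct and follows essentially the same route as the paper: you subtract the two copies of (\ref{u_lin}--\ref{cond_lin}), telescope the quadratic source terms $(V - V(a_0))u_0$, $N(u_0, \phi(a_0))$, and $\dot a_0 (\partial_a\phi(a_0) - a_0^{-5/4}\partial_a\phi)$ so that each difference carries a factor $O(\epsilon_1)$, feed the results through Proposition \ref{prop28}, Lemma \ref{lemma_free}, and the $e^{\pm tk}$ convolutions, and then extract the Lipschitz estimate on $h$ from the subtracted instance of the boundedness condition on $x_+$. The paper phrases the $h$-step as "$x_+^1 - x_+^2$ is bounded iff (\ref{h1h2}) holds, and both $x_+^j$ are bounded, so (\ref{h1h2}) holds," which is the same computation as your direct subtraction of the two formulas for $h$; your added observations that $P_c g = 0$ and that the orthogonality constraint $\langle k(\psi_0^j - \phi) - \psi_1^j, g\rangle = 0$ removes any linear-in-data contribution from $h$ are both correct and consistent with the paper's remarks $P_c(\tilde\psi_0 - \phi) = P_c(\psi_0 - \phi)$, $P_c\tilde\psi_1 = \psi_1$, and $k(\tilde\psi_0 - \phi) + \tilde\psi_1 = k(\psi_0 - \phi) + \psi_1$.
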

\begin{proof}
A simple computation shows that
$$\begin{aligned}
&|(\partial_a \phi(a_0^1(t)) - a_0^1(t)^{-5/4} \partial_a \phi) - (\partial_a \phi(a_0^2(t)) - a_0^2(t)^{-5/4} \partial_a \phi)| \\
&\les |a_0^1(t) - a_0^2(t)| O(\langle x \rangle^{-3}) \les \|\dot a_0^1 - \dot a_0^2\|_{L^1} O(\langle x \rangle^{-3})
\end{aligned}$$
and the difference satisfies symbol-type estimates.

As in the previous section we arrive at
$$\begin{aligned}
&\|\dot a_0^1(t) (\partial_a \phi(a_0^1(t)) - a_0^1(t)^{-5/4} \partial_a \phi) - \\
&- \dot a_0^2(t) (\partial_a \phi(a_0^2(t)) - a_0^2(t)^{-5/4} \partial_a \phi)\|_{L^1_t |\dl|^{-1} L^{3/2, 1}_x \cap L^1_t \dot H^1_x  \cap L^\infty_t |\dl|^{-1} L^{3/2, 1}_x \cap L^\infty_t \dot H^1_x} \les \\
&\les \epsilon_1 \|(u_0^1, a_0^1) - (u_0^2, a_0^2)\|_X.
\end{aligned}$$
Likewise,
$$
|V(a_0^1(t)) - V(a_0^2(t))| \leq \Big|\int_{a_0^1(t)}^{a_0^2(t)} |\partial_a V(a)| \dd a\Big|
$$
implies
$$
\|V(a_0^1(t)) - V(a_0^2(t))\|_{L^1_x L^\infty_t \cap L^\infty_{x, t}} \les \|\dot a_0^1 - \dot a_0^2\|_{L^1}
$$
and thus
$$\begin{aligned}
\|(V(a_0^1(t))-V) u_0^1(t) - (V(a_0^2(t))-V) u_0^2(t)\|_{L^{6/5, 2}_x L^\infty_t \cap L^{3/2, 1}_x L^2_t \cap L^{3/2, 1}_x L^1_t} \les \\
\les \epsilon_1 \|(u_0^1, a_0^1) - (u_0^2, a_0^2)\|_X.
\end{aligned}$$
Furthermore,
$$
\|\phi(a_0^1(t)) - \phi(a_0^2(t))\|_{L^{6, 2}_x L^\infty_t} \les \|\dot a_0^1 - \dot a_0^2\|_{L^1}
$$
implies
$$\begin{aligned}
\|N(u_0^1(t), \phi(a_0^1(t))) - N(u_0^2(t), \phi(a_0^2(t)))\|_{L^{6/5, 2}_x L^\infty_t \cap L^{3/2, 1}_x L^2_t \cap L^{3/2, 1}_x L^1_t} \les \\
\les \epsilon_1 \|(u_0^1, a_0^1) - (u_0^2, a_0^2)\|_X.
\end{aligned}$$
Subtracting the two corresponding copies of (\ref{xpm_lin}) from one another, we obtain
$$\begin{aligned}
\|x_-^1 - x_-^2\|_{L^1 \cap L^\infty} &\les \epsilon_1 \|(u_0^1, a_0^1) - (u_0^2, a_0^2)\|_X + \\
&+ \|\psi_0^1-\psi_0^2\|_{|\dl|^{-1} L^{3/2, 1} \cap \dot H^1} + \|\psi_1^1-\psi_1^2\|_{L^{3/2, 1} \cap L^2}.
\end{aligned}$$
Taking the difference of the equations for $x_+^1(t)$ and $x_+^2(t)$, we obtain that $x_+^1(t) - x_+^2(t)$ is bounded if and only if
\be\begin{aligned}\lb{h1h2}
2k(h(u_0^1, a_0^1)-h(u_0^2, a_0^2)) \langle g, g \rangle &= \int_0^\infty e^{-s k} \langle - k \dot a_0^1(s) (\partial_a \phi(a_0^1(s)) - a_0^1(s)^{-5/4} \partial_a \phi)+ \\
&+ (V - V(a_0^1(s))) u_0^1(s) + N(u_0^1(s), \phi(a_0^1(s))), g \rangle \dd s - \\
&- \int_0^\infty e^{-s k} \langle - k \dot a_0^2(s) (\partial_a \phi(a_0^2(s)) - a_0^2(s)^{-5/4} \partial_a \phi)+ \\
&+ (V - V(a_0^2(s))) u_0^2(s) + N(u_0^2(s), \phi(a_0^2(s))), g \rangle \dd s.
\end{aligned}\ee
On the other hand, since $x_+^1(t)$ and $x_+^2(t)$ are in fact bounded, their difference must be bounded also, so (\ref{h1h2}) must hold. Hence
$$
|h(u_0^1, a_0^1)-h(u_0^2, a_0^2)| \les \epsilon_1 \|(u_0^1, a_0^1) - (u_0^2, a_0^2)\|_X,
$$
which was to be shown. Furthermore, under this condition
$$
\|x_+^1 - x_+^2\|_{L^1 \cap L^\infty} \les \epsilon_1 \|(u_0^1, a_0^1) - (u_0^2, a_0^2)\|_X.
$$
Consequently
$$\begin{aligned}
\|P_p u^1 - P_p u^2\|_{L^{6, 2}_x L^\infty_t \cap L^\infty_x L^2_t \cap L^\infty_x L^1_t} &\les \epsilon_1 \|(u_0^1, a_0^1) - (u_0^2, a_0^2)\|_X + \\
&+ \|\psi_0^1-\psi_0^2\|_{|\dl|^{-1} L^{3/2, 1} \cap \dot H^1} + \|\psi_1^1-\psi_1^2\|_{L^{3/2, 1} \cap L^2}.
\end{aligned}$$
By subtracting two copies of (\ref{u_lin}) from one another we likewise obtain
$$\begin{aligned}
\|P_c u^1 - P_c u^2\|_{L^{6, 2}_x L^\infty_t \cap L^\infty_x L^2_t \cap L^\infty_x L^1_t} &\les \epsilon_1 \|(u_0^1, a_0^1) - (u_0^2, a_0^2)\|_X + \\
&+\|\psi_0^1-\psi_0^2\|_{|\dl|^{-1} L^{3/2, 1} \cap \dot H^1} + \|\psi_1^1-\psi_1^2\|_{L^{3/2, 1} \cap L^2}
\end{aligned}$$
and doing the same for (\ref{cond_lin}) leads to
$$\begin{aligned}
\|\dot a^1 - \dot a^2\|_{L^1 \cap L^\infty} &\les \epsilon_1 \|(u_0^1, a_0^1) - (u_0^2, a_0^2)\|_X + \\
&+\|\psi_0^1-\psi_0^2\|_{|\dl|^{-1} L^{3/2, 1} \cap \dot H^1} + \|\psi_1^1-\psi_1^2\|_{L^{3/2, 1} \cap L^2}.
\end{aligned}$$
Thus we have proved that
$$\begin{aligned}
\|(u^1, a^1) - (u^2, a^2)\|_X &\les \epsilon_1 \|(u_0^1, a_0^1) - (u_0^2, a_0^2)\|_X + \\
&+\|\psi_0^1-\psi_0^2\|_{|\dl|^{-1} L^{3/2, 1} \cap \dot H^1} + \|\psi_1^1-\psi_1^2\|_{L^{3/2, 1} \cap L^2}.
\end{aligned}$$
\end{proof}

\subsection{Proof of the main theorem}
\begin{proof}[Proof of Theorem \ref{main_theorem}]
The proof is a straightforward application of Propositions \ref{prop_stab} and \ref{prop_cont}. For sufficiently small initial data, i.e.\ $|\psi_0-\phi\|_{|\dl|^{-1}| L^{3/2, 1} \cap \dot H^1} + \|\psi_1\|_{L^{3/2, 1} \cap L^2}$ small, take
$$
\epsilon = \epsilon_1 = C(\|\psi_0-\phi\|_{|\dl|^{-1} L^{3/2, 1} \cap \dot H^1} + \|\psi_1\|_{L^{3/2, 1} \cap L^2}).
$$
For fixed initial data $(\phi_0, \phi_1)$, consider the sequence $(u^0, a^0) = (0, 1)$, $(u^n, a^n) = \Phi(u^{n-1}, a^{n-1})$ for $n \geq 1$. By induction it follows that for every $n$, by Proposition \ref{prop_stab}, $\|(u^n, a^n) - (0, 1)\|_X < \epsilon$ and $|h(u^n, a^n)| \les \epsilon^2$. By Proposition \ref{prop_cont}
$$\begin{aligned}
\|(u^n, a^n)-(u^{n-1}, a^{n-1})\|_X + |h(u^{n-1}, a^{n-1})-h(u^{n-2}, a^{n-2}| \les \\
\les \epsilon_1 \|(u^{n-1}, a^{n-1})-(u^{n-2}, a^{n-2})\|_X.
\end{aligned}$$
For sufficiently small $\epsilon_1$ it follows that the sequence $(u^n, a^n)$ converges in $X$ to some limit $(u, a)$, which by the above must fulfill $\|(u, a) - (0, 1)\|_X \leq \epsilon$. Likewise, $h(u^n, a^n)$ converges to a limit, $h(\psi_0, \psi_1) \equiv h(u, a)$, such that $|h(\psi_0, \psi_1)| \les \epsilon^2$.

By passing to the limit in (\ref{u_lin}--\ref{cond_lin}) we obtain that $u$ and $a$ fulfill the nonlinear system (\ref{xpm}), (\ref{eqn_u}), (\ref{cond}), and (\ref{u_lin}), with initial data $(\psi_0+h(\psi_0, \psi_1) g, \psi_1 - h(\psi_0, \psi_1) kg)$. Then $\psi(\psi_0, \psi_1)(t):= u(t)+\phi(a(t))$ satisfies equation (\ref{wave}) for $t \geq 0$ with the stated initial data.

As stated above, one has that $\|(u, a) - (0, 1)\|_X \leq \epsilon$ and furthermore, by interpolation, $\|u\|_{L^8_{x, t}} \leq \|u\|_{L^{6, 2}_x L^\infty_t}^{3/4} \|u\|_{L^{\infty}_x L^2_t}^{1/4} \leq \epsilon$.

We next perform a comparison between two solutions $(u^1, a^1)$ and $(u^2, a^2)$ with different initial data, $(\psi_0^j+h(\psi_0^j, \psi_1^j) g, \psi_1^j - h(\psi_0^j, \psi_1^j) kg)$, $1 \leq j \leq 2$, again using Proposition \ref{prop_cont}. We obtain
$$\begin{aligned}
\|(u^1, a^1) - (u^2, a^2)\|_X &\les \epsilon_1 \|(u^1, a^1) - (u^2, a^2)\|_X + \\
&+ \|\psi_0^1-\psi_0^2\|_{|\dl|^{-1} L^{3/2, 1} \cap \dot H^1} + \|\psi_1^1-\psi_1^2\|_{L^{3/2, 1} \cap L^2},
\end{aligned}$$
so
$$
\|(u^1, a^1) - (u^2, a^2)\|_X \les \|\psi_0^1-\psi_0^2\|_{|\dl|^{-1} L^{3/2, 1} \cap \dot H^1} + \|\psi_1^1-\psi_1^2\|_{L^{3/2, 1} \cap L^2}.
$$
We also obtain that
$$\begin{aligned}
|h(\psi_0^1, \psi_1^1)-h(\psi_0^2, \psi_1^2)| &\les \epsilon_1 \|(u^1, a^1) - (u^2, a^2)\|_X \\
&\les \epsilon_1 (\|\psi_0^1-\psi_0^2\|_{|\dl|^{-1} L^{3/2, 1} \cap \dot H^1} + \|\psi_1^1-\psi_1^2\|_{L^{3/2, 1} \cap L^2}).
\end{aligned}$$
Furthermore,
$$\begin{aligned}
\|\psi(\psi_0^1, \psi_1^1) - \psi(\psi_0^2, \psi_1^2)\|_{L^{6, 2}_x L^\infty_t} &\leq \|u^1-u^2\|_{L^{6, 2}_x L^\infty_t} + \|\phi(a^1(t))-\phi(a^2(t))\|_{L^{6, 2}_x L^\infty_t} \\
&\les \|(u^1, a^1) - (u^2, a^2)\|_X \\
&\les \|\psi_0^1-\psi_0^2\|_{|\dl|^{-1} L^{3/2, 1} \cap \dot H^1} + \|\psi_1^1-\psi_1^2\|_{L^{3/2, 1} \cap L^2}.
\end{aligned}$$
Concerning energy, let $v = \psi(\psi_0, \psi_1)(t) - \phi = u(t) + \phi(a(t))-\phi$. Clearly
$$\begin{aligned}
\|v\|_{L^{6, 2}_x L^\infty_t} &\les \|u\|_{L^{6, 2}_x L^\infty_t} + \|\phi(a(t))-\phi\|_{L^{6, 2}_x L^\infty_t} \les \\
&\|\psi_0^1-\psi_0^2\|_{|\dl|^{-1} L^{3/2, 1} \cap \dot H^1} + \|\psi_1^1-\psi_1^2\|_{L^{3/2, 1} \cap L^2},\\
\|v\|_{L^8_t[t_1, t_2] L^8_x} &\les \|u\|_{L^8_{x, t}} + \|\phi(a(t))-\phi\|_{L^8_t[t_1, t_2] L^8_t} \les \\
&\les (1 + (t_2-t_1)^{1/8}) (\|\psi_0^1-\psi_0^2\|_{|\dl|^{-1} L^{3/2, 1} \cap \dot H^1} + \|\psi_1^1-\psi_1^2\|_{L^{3/2, 1} \cap L^2}).
\end{aligned}$$
In addition, $v$ satisfies the equation
$$\begin{aligned}\lb{eq_v}
&\partial_t^2 v(t) - \Delta v = - V v(t) + N(v(t), \phi),\\
&v(0) = \psi_0 + h(\psi_0, \psi_1) g - \phi,\ \partial_t v(0) = \psi_1 - h(\psi_0, \psi_1) kg.
\end{aligned}$$
We obtain an equation akin to (\ref{equ}): starting at time $T$
\be\begin{aligned}\lb{eq_free}
v(t) &= \cos((t-T)\sqrt {-\Delta}) v(T) + \frac{\sin((t-T)\sqrt {-\Delta})}{\sqrt {-\Delta}} \partial_t v(T) + \\
&+ \int_T^t \frac{\sin((t-s)\sqrt {-\Delta})}{\sqrt {-\Delta}} \big(- V v(s) + N(v(s), \phi)\big) \dd s.
\end{aligned}\ee
Recall the classical Strichartz estimates of Keel--Tao \cite{tao}, in particular
$$
\Big\|\cos(t\sqrt{-\Delta}) f_0 + \frac{\sin(t\sqrt{-\Delta})}{\sqrt{-\Delta}} f_1\Big\|_{L^5_t L^{10}_x} \les \|f_0\|_{\dot H^1} + \|f_1\|_{L^2}.
$$
The first two terms in (\ref{eq_free}) are clearly in $L^\infty_t \dot H^1_x \cap \dot W^{1, \infty}_t L^2_x \cap L^5_t L^{10}_x$:
$$\begin{aligned}
&\Big\|\cos((t-T)\sqrt {-\Delta}) v(T) + \frac{\sin((t-T)\sqrt {-\Delta})}{\sqrt {-\Delta}} \partial_t v(T)\Big\|_{L^\infty_t \dot H^1_x \cap \dot W^{1, \infty}_t L^2_x \cap L^5_t L^{10}_x} \les \\
&\les \|v(T)\|_{\dot H^1} + \|\partial_t v(T)\|_{L^2}.
\end{aligned}$$
Also note that
$$\begin{aligned}
&\|-V v(s) + N(v(s), \phi)\|_{L^1_s[T, T+t_0] L^2_x} \les \\
&\les t_0(\|v\|_{L^{6, 2}_x L^\infty_t} + \|v\|_{L^{6, 2}_x L^\infty_t}^3) + t_0^{1/2} \|v\|_{L^8_t[T, T+t_0] L^8_x}^4 +  \|u\|_{L^5_t[T, T+t_0] L^{10}_x}^5 \\
&\les (t_0 + t_0^{1/2}) (\|\psi_0-\phi\|_{|\dl|^{-1} L^{3/2, 1} \cap \dot H^1} + \|\psi_1\|_{L^{3/2, 1} \cap L^2}) + \|v\|_{L^5_t[T, T+t_0] L^{10}_x}^5.
\end{aligned}$$
Then
$$\begin{aligned}
&\Big\|\int_{T}^t \frac{\sin((t-s)\sqrt {-\Delta})}{\sqrt {-\Delta}} \big(- V(a(s)) u(s) + \\
&+ N(u(s), \phi(a(s)))\big) \dd s\Big\|_{L^\infty_t[T, T+t_0] \dot H^1_x \cap \dot W^{1, \infty}_t[T, T+t_0] L^2_x \cap L^5_t[T, T+t_0] L^{10}_x} \les \\
&\les \|-V(a(s))) u(s) + N(u(s), \phi(a(s)))\|_{L^1_s[T, T+t_0] L^2_x} \\
&\les (t_0 + t_0^{1/2}) (\|\psi_0-\phi\|_{|\dl|^{-1} L^{3/2, 1} \cap \dot H^1} + \|\psi_1\|_{L^{3/2, 1} \cap L^2}) + \|v\|_{L^5_t[T, T+t_0] L^{10}_x}^5.
\end{aligned}$$
By a fixed point argument, for sufficiently small $t_0$, sufficiently small $\|\psi_0-\phi\|_{|\dl|^{-1} L^{3/2, 1} \cap \dot H^1} + \|\psi_1\|_{L^{3/2, 1} \cap L^2}$, and sufficiently small $\|v(T)\|_{\dot H^1} + \|\partial_t v(T)\|_{L^2}$, we obtain that $v \in L^\infty_t[T, T+t_0] \dot H^1_x \cap \dot W^{1, \infty}_t[T, T+t_0] L^2_x \cap L^5_t[T, T+t_0] L^{10}_x$.

Let
$$
\tilde E(t):= \frac 1 2 \int_{\R^3} |\dl v(t)|^2 \dd x + \int_{\R^3} (\partial_t v(t))^2 \dd x.
$$
By conservation of energy we obtain that on any interval $[0, T]$ on which $\tilde E(t) < \infty$, for any $t \in [0, T]$
$$\begin{aligned}
E(t)&:=\frac 1 2 \int_{\R^3} |\dl \phi|^2 + 2 \dl \phi \cdot \dl v(t) + |\dl v(t)|^2 \dd x + \int_{\R^3} (\partial_t v(t))^2 \dd x - \\
&- \frac 1 6 \int_{\R^3} \phi^6 + 6 \phi^5 v(t) + 15 \phi^4 (v(t))^2 + 20 \phi^3 (v(t))^3 + 15 \phi^2 (v(t))^4 + \\
&+ 6 \phi (v(t))^5 + (v(t))^6 \dd x
\end{aligned}$$
is constant in $t$.
Since
$$
\int_{\R^3} \dl \phi \cdot \dl v(t) \dd x = - \int_{\R^3} \Delta \phi v \dd x= \int_{\R^3} \phi^5 v \dd x
$$
and $\|v\|_{L^{6, 2}_x L^\infty_t} \les \|\psi_0-\phi\|_{|\dl|^{-1} L^{3/2, 1} \cap \dot H^1} + \|\psi_1\|_{L^{3/2, 1} \cap L^2}$, we obtain that for every $t \in [0, T]$
$$\begin{aligned}
\tilde E(t) &\les \tilde E(0) + (\|\psi_0-\phi\|_{|\dl|^{-1} L^{3/2, 1} \cap \dot H^1} + \|\psi_1\|_{L^{3/2, 1} \cap L^2})^2 \\
&\les (\|\psi_0-\phi\|_{|\dl|^{-1} L^{3/2, 1} \cap \dot H^1} + \|\psi_1\|_{L^{3/2, 1} \cap L^2})^2.
\end{aligned}$$
We can then bootstrap to the interval $[0, T+t_0]$ by the above argument.

Thus the energy of $v(t)$ remains bounded for all $t$ and
$$
\|v\|_{L^5_t[t_1, t_2] L^{10}_x} \les (1 + (t_2-t_1)^{1/5}) (\|\psi_0-\phi\|_{|\dl|^{-1} L^{3/2, 1} \cap \dot H^1} + \|\psi_1\|_{L^{3/2, 1} \cap L^2}).
$$
\end{proof}

\begin{proof}[Proof of Proposition \ref{sas}]
Write $v(0) = \psi_0 + h_0 g - \phi$, $\partial_t v(0) = \psi_1 - h_0 k g$, where
$$
2k h_0 \langle g, g\rangle = \langle \langle k v(0) - \partial_t v(0), g \rangle
$$
and $\langle k(\psi_0 - \phi) - \psi_1, g \rangle = 0$. Then $|h_0| \les \epsilon$, so $\|\psi_0-\phi\|_{|\dl|^{-1} L^{3/2, 1} \cap \dot H^1} + \|\psi_1\|_{L^{3/2, 1} \cap L^2} \les \epsilon$.

For sufficiently small $\epsilon$, this means that $(\tilde \psi_0 = \psi_0+h(\psi_0, \psi_1)g, \tilde \psi_1 = \psi_1-h(\psi_0, \psi_1)kg)$ are the initial data for the global solution $\psi(\psi_0, \psi_1)(t)$ described by Theorem \ref{main_theorem}. Write $\psi(\psi_0, \psi_1)(t) = \phi + \tilde v(t)$, where $\|\tilde v(t)\|_{L^{6, 2}_x L^\infty_t} \les \epsilon$ by Theorem \ref{main_theorem}.

Both $v(t)$ and $\tilde v(t)$ admit decompositions
$$\begin{aligned}
v(t) &= (2k)^{-1/2}(x_+(t) + x_-(t)) g + P_c v(t),\\
\tilde v(t) &= (2k)^{-1/2}(\tilde x_+(t) + x_-(t)) g + P_c \tilde v(t),
\end{aligned}$$
where $x_\pm$ and $\tilde x_\pm$ satisfy the equations
$$\begin{aligned}
x_\pm(t) &= (2k)^{-1/2} e^{\pm tk} \langle k v(0) \mp \partial_t v(0), g \rangle \mp \\
&\mp (2k)^{-1/2} \int_0^t e^{\pm(t-s)k} \langle N(v(s), \phi), g \rangle \dd s,\\
\tilde x_\pm(t) &= (2k)^{-1/2} e^{\pm tk} \langle k (\tilde \psi_0 - \phi) \mp \tilde \psi_1, g \rangle \mp \\
&\mp (2k)^{-1/2} \int_0^t e^{\pm(t-s)k} \langle N(\tilde v(s), \phi), g \rangle \dd s.
\end{aligned}$$
Likewise, $P_c v(t)$ and $P_c \tilde v(t)$ satisfy the equations
$$\begin{aligned}
P_c v(t) &= \cos(t\sqrt H) P_c (\psi_0-\phi) + \frac {\sin(t\sqrt H)P_c}{\sqrt H} \psi_1 + \\
&+ \int_0^t \frac {\sin((t-s)\sqrt H)P_c}{\sqrt H} N(v(s), \phi) \dd s,\\
P_c \tilde v(t) &= \cos(t\sqrt H) P_c (\psi_0-\phi) + \frac {\sin(t\sqrt H)P_c}{\sqrt H} \psi_1 + \\
&+ \int_0^t \frac {\sin((t-s)\sqrt H)P_c}{\sqrt H} N(\tilde v(s), \phi) \dd s.
\end{aligned}$$
We subtract each pair of equations from one another. Note that
$$
kv(0)+\partial_tv(0) = k(\psi_0-\psi)+\psi_1 = k(\tilde \psi_0 - \phi) + \tilde \psi_1.
$$
Consequently
$$
x_-(t) - \tilde x_-(t) = (2k)^{-1/2} \int_0^t e^{-(t-s)k} \langle N(v(s), \phi) - N(\tilde v(s), \phi), g \rangle \dd s
$$
and so
$$
\|x_- - \tilde x_-\|_{L^\infty_t} \les \epsilon \|v - \tilde v\|_{L^{6, 2}_x L^\infty_t}.
$$
Likewise,
$$\begin{aligned}
x_+(t) - \tilde x_+(t) &= (2k)^{-1/2}\Big(2k(h_0 - h(\psi_0, \psi_1))\langle g, g \rangle - \\&-\int_0^t e^{(t-s)k} \langle N(v(s), \phi) - N(\tilde v(s), \phi), g \rangle \dd s\Big).
\end{aligned}$$
Since $x_+(t)$ and $\tilde x_+(t)$ are both bounded (here is where we use the assumption that $\|\partial_t v(t)\|_{L^\infty_t L^2_x} < \infty$), so is their difference, implying by the same process as before that
$$
2k(h_0 - h(\psi_0, \psi_1))\langle g, g \rangle = \int_0^\infty e^{-sk} \langle N(v(s), \phi) - N(\tilde v(s), \phi), g \rangle \dd s.
$$
Consequently
\be\lb{hhh}
|h_0 - h(\psi_0, \psi_1)| \les \epsilon \|v - \tilde v\|_{L^{6, 2}_x L^\infty_t}.
\ee
Then
$$
x_+(t) - \tilde x_+(t) = \int_t^\infty e^{(t-s)k} \langle N(v(s), \phi) - N(\tilde v(s), \phi), g \rangle \dd s
$$
and
$$
\|x_+ - \tilde x_+\|_{L^\infty_t} \les \epsilon \|v - \tilde v\|_{L^{6, 2}_x L^\infty_t}.
$$
Finally,
$$
P_c v(t) - P_c \tilde v(t) = \int_0^t \frac {\sin((t-s)\sqrt H)P_c}{\sqrt H} (N(v(s), \phi) - N(\tilde v(s), \phi)) \dd s,
$$
so
$$
\|P_c v - P_c \tilde v\|_{L^{6, 2}_x L^\infty_t} \les \epsilon \|v - \tilde v\|_{L^{6, 2}_x L^\infty_t}.
$$
Putting all these estimates together, we obtain that
$$
\|v - \tilde v\|_{L^{6, 2}_x L^\infty_t} \les \epsilon \|v - \tilde v\|_{L^{6, 2}_x L^\infty_t}.
$$
For sufficiently small $\epsilon$, this implies that $v \equiv \tilde v$, so by (\ref{hhh}) $h_0 = h(\psi_0, \psi_1)$. This implies that $\psi \in \mc N$.
\end{proof}

\begin{proof}[Proof of Proposition \ref{invariant}]
Consider a solution to (\ref{wave}) with initial data $(\psi_0 + h(\psi_0, \psi_1) g - \phi, \psi_1 - h(\psi_0, \psi_1) kg)$ in $\tilde {\mc N}$. Theorem \ref{main_theorem} applies and we obtain $\psi(\psi_0, \psi_1)(t) = u(t) + \phi(a(t))$ for $t \geq 0$, where
$$
a(0)=1,\ \|\dot a\|_{L^1} \les \|\psi_0 - \phi\|_{\langle x \rangle^{-1} \dot H^1} + \|\psi_1\|_{\langle x \rangle^{-1} L^2}
$$
and
$$
\|u\|_{L^{6, 2}_x L^\infty_t \cap L^\infty_x L^2_t \cap L^8_{x, t} \cap L^\infty_x L^1_t \cap L^\infty_t \dot H^1_x \cap \dot W^{1, \infty}_t L^2_x} \les \|\psi_0 - \phi\|_{\langle x \rangle^{-1} \dot H^1} + \|\psi_1\|_{\langle x \rangle^{-1} L^2}.
$$
By the local well-posedness for small data theory (which applies to all solutions, not only those with initial data on $\tilde{\mc N}$), for sufficiently small $\epsilon$, the solution also exists on some negative interval $[-t_0, 0]$ and
$$\begin{aligned}
\|\dot a\|_{L^1[-t_0, 0]} \les \|\psi_0 - \phi(a_0)\|_{\langle x \rangle^{-1} \dot H^1} + \|\psi_1\|_{\langle x \rangle^{-1} L^2},\\
\|u\|_{L^{6, 2}_x L^\infty_t[-t_0, 0] \cap L^\infty_x L^2_t[-t_0, 0] \cap L^\infty_x L^1_t[t_0, 0] \cap L^\infty_t[-t_0, 0] \dot H^1_x \cap \dot W^{1, \infty}_t[-t_0,0] L^2_x} \les \\
\les \|\psi_0 - \phi(a_0)\|_{\langle x \rangle^{-1} \dot H^1} + \|\psi_1\|_{\langle x \rangle^{-1} L^2}.
\end{aligned}$$
In fact, we could even replace $\langle x \rangle^{-1} \dot H^1 \times \langle x \rangle^{-1} L^2$ by $\dot H^1\times L^2$ in the above (for the local theory only).

As before, let $v(t) := \psi(\psi_0, \psi_1)(t) - \phi(a_0) = u(t) + \phi(a(t)) - \phi(a_0)$. Then (\ref{eq_free}) shows that
\be\begin{aligned}\lb{vv}
&\Big\|v - \frac{\sin(t\sqrt{-\Delta})}{\sqrt{-\Delta}} (\psi_0 + h(\psi_0, \psi_1) g - \phi) - \\
&- \cos(t\sqrt{-\Delta}) (\psi_1 - h(\psi_0, \psi_1) kg)\Big\|_{L^\infty_t[-t_0, t_0] \dot H^1_x \cap \dot W^{1, \infty}_t[-t_0, t_0] L^2_x} \to 0
\end{aligned}\ee
as $t_0 \to 0$. Furthermore,
$$
\|v\|_{L^{6, 2}_x L^\infty_t \cap L^\infty_t[-t_0, \infty) \dot H^1_x} + \|\partial_t v\|_{L^\infty_t[-t_0, \infty) L^2_x} \les \|\psi_0 - \phi(a_0)\|_{\langle x \rangle^{-1} \dot H^1} + \|\psi_1\|_{\langle x \rangle^{-1} L^2}.
$$
and
$$\begin{aligned}
\|v\|_{L^8_t[t_1, t_2] L^8_x} \les (1+(t_2-t_1)^{1/8}) (\|\psi_0 - \phi(a_0)\|_{\langle x \rangle^{-1} \dot H^1} + \|\psi_1\|_{\langle x \rangle^{-1} L^2}),\\
\|v\|_{L^5_t[t_1, t_2] L^{10}_x} \les (1+(t_2-t_1)^{1/5}) (\|\psi_0 - \phi(a_0)\|_{\langle x \rangle^{-1} \dot H^1} + \|\psi_1\|_{\langle x \rangle^{-1} L^2}).
\end{aligned}$$

Next, let $\tilde v(t) = x v(t)$. Then $\tilde v$ fulfills the equation
$$\begin{aligned}
&\partial_{tt} \tilde v(t) - \Delta \tilde v(t) = -2\dl v(t) - xV v(t) + \tilde N(\tilde v(t), v(t), \phi),\\
&\tilde v(0) = x(\psi_0 + h(\psi_0, \psi_1) g - \phi),\ \tilde v(1) = x(\psi_1 - h(\psi_0, \psi_1) kg).
\end{aligned}$$
where $\tilde N(\tilde v, v, \phi) := 10 \phi^3 \tilde v v + 10 \phi^2 \tilde v v^2 + 5 \phi \tilde v v ^3 + \tilde v v^4$.

However,
$$
\|-2\dl v(t) - xV v(t)\|_{L^1_t[t_1, t_2] L^2_x} \les (t_2-t_1) (\|\psi_0 - \phi\|_{\langle x \rangle^{-1} \dot H^1} + \|\psi_1\|_{\langle x \rangle^{-1} L^2})
$$
and, assuming $t_2 - t_1 \leq 1$,
$$\begin{aligned}\lb{tilden}
&\|\tilde N(\tilde v(t), v(t), \phi)\|_{L^1_t[t_1, t_2] L^2_x} \les (t_2-t_1)^{1/4} \|\tilde v\|_{L^8_t[t_1, t_2]L^8_x} \|v\|_{L^8_t[t_1, t_2]L^8_x} + \\
&+ (t_2-t_1)^{3/8} \|\tilde v\|_{L^8_t[t_1, t_2]L^8_x} \|v\|_{L^8_t[t_1, t_2]L^8_x}^2 \\
&+ (t_2-t_1)^{1/2} \|\tilde v\|_{L^8_t[t_1, t_2]L^8_x} \|v\|_{L^8_t[t_1, t_2]L^8_x}^3 \\
&+\|\tilde v\|_{L^5_t[t_1, t_2]L^{10}_x} \|v\|_{L^5_t[t_1, t_2]L^{10}_x}^4 \\
&\les ((t_2-t_1)^{1/2} + 1) \cdot \\
&\cdot (\|\psi_0 - \phi\|_{\langle x \rangle^{-1} \dot H^1} + \|\psi_1\|_{\langle x \rangle^{-1} L^2}) (\|\tilde v\|_{L^8_t[t_1, t_2]L^8_x} + \|\tilde v\|_{L^5_t[t_1, t_2]L^{10}_x}). 
\end{aligned}$$
Writing $\tilde v$ using the Duhamel formula, we again obtain that
\be\begin{aligned}\lb{tildev}
\tilde v(t) &= \cos((t-t_1)\sqrt{-\Delta}) \tilde v(t_1) + \frac{\sin((t-t_1)\sqrt{\Delta})}{\sqrt{-\Delta}} \partial_t \tilde v(t_1) + \\
&+ \int_{t_1}^t \frac{\sin((t-s)\sqrt{\Delta})}{\sqrt{-\Delta}} \big(-2\dl v(t) - xV v(t)+\tilde N(\tilde v(t), v(t), \phi)\big) \dd s,
\end{aligned}\ee
so
$$\begin{aligned}
&\|\tilde v\|_{L^\infty[t_1, t_2] \dot H^1_t \cap \dot W^{1, \infty}_t[t_1, t_2] L^2_t \cap L^8_t[t_1, t_2]L^8_x\cap L^5_t[t_1, t_2]L^{10}_x} \les \\
&\les \|\tilde v(t_1)\|_{\dot H^1} + \|\partial_t \tilde v(t_1)\|_{L^2} + (t_2-t_1) (\|\psi_0 - \phi\|_{\langle x \rangle^{-1} \dot H^1} + \|\psi_1\|_{\langle x \rangle^{-1} L^2}) + \\
&+((t_2-t_1)^{1/2} + 1) (\|\psi_0 - \phi\|_{\langle x \rangle^{-1} \dot H^1} + \|\psi_1\|_{\langle x \rangle^{-1} L^2}) \cdot \\
&\cdot (\|\tilde v\|_{L^8_t[t_1, t_2]L^8_x} + \|\tilde v\|_{L^5_t[t_1, t_2]L^{10}_x}).
\end{aligned}$$
As long as $t_2-t_1 \leq 1$ and $\|\psi_0 - \phi\|_{\langle x \rangle^{-1} \dot H^1} + \|\psi_1\|_{\langle x \rangle^{-1} L^2}$ is sufficiently small, we can solve the fixed point problem regardless of the size of the initial data and of the inhomogenous terms and obtain
$$\begin{aligned}
\|\tilde v\|_{L^\infty[t_1, t_2] \dot H^1_t \cap \dot W^{1, \infty}_t[t_1, t_2] L^2_t \cap L^8_t[t_1, t_2]L^8_x\cap L^5_t[t_1, t_2]L^{10}_x} \les \|\tilde v(t_1)\|_{\dot H^1} + \|\partial_t v(t_1)\|_{L^2} + \\
+\|\psi_0 - \phi\|_{\langle x \rangle^{-1} \dot H^1} + \|\psi_1\|_{\langle x \rangle^{-1} L^2}.
\end{aligned}$$
By bootstrapping we obtain that for $t \geq -t_0$
$$
\|\tilde v\|_{L^\infty[0, t] \dot H^1_t \cap \dot W^{1, \infty}_t[0, t] L^2_t \cap L^8_t[0, t]L^8_x\cap L^5_t[0, t]L^{10}_x} \les e^t (\|\psi_0 - \phi\|_{\langle x \rangle^{-1} \dot H^1} + \|\psi_1\|_{\langle x \rangle^{-1} L^2}).
$$
Thus $(v(t), \partial_t v(t)) \in (\langle x \rangle^{-1} \dot H^1 \times \langle x \rangle^{-1} L^2$ for every $t \in [-t_0, \infty)$.

Due to (\ref{tildev}), we also obtain that
\be\begin{aligned}\lb{tildevv}
&\Big\|\tilde v - \frac{\sin(t\sqrt{-\Delta})}{\sqrt{-\Delta}} x(\psi_0 + h(\psi_0, \psi_1) g - \phi) - \\
&- \cos(t\sqrt{-\Delta}) x(\psi_1 - h(\psi_0, \psi_1) kg)\Big\|_{L^\infty_t[-t_0, t_0] \dot H^1_x \cap \dot W^{1, \infty}_t[-t_0, t_0] L^2_x} \to 0
\end{aligned}\ee
as $t_0 \to 0$.
Thus, by (\ref{vv}) and (\ref{tildevv}), for any $\delta>0$ and sufficiently small $t_0$, for every $t \in [-t_0, t_0]$
$$
\|v(t)-(\psi_0 + h(\psi_0, \psi_1) g - \phi)\|_{\langle x \rangle^{-1} \dot H^1} + \|\partial_t v(t)\|_{\langle x \rangle^{-1} L^2} < \delta.
$$
For some $t_1 \in [-t_0, t_0]$ and $(\psi_0, \psi_1) \in \tilde{\mc N}_0$, consider the solution having $(\psi(\psi_0, \psi_1)(t_1), \partial_t \psi(\psi_0, \psi_1)(t))$ as initial data. It exists globally and, for
$$
\psi(\psi_0, \psi_1)(t)=u(t)+\phi(a(t)),
$$
it has a small $\|(u, a)\|_X$ norm (being nothing but the solution we started with, time-shifted). Then one can find $h(t_1)$ and
$$
\tilde \psi_0 = \psi(\psi_0, \psi_1)(t_1) - h(t_1) g,\ \tilde \psi_1 = \partial_t \psi(\psi_0, \psi_1)(t_1) + h(t_1) k g
$$
such that $\langle k(\tilde \psi_0-\phi)-\tilde \psi_1, g \rangle = 0$. Indeed, this equation reduces to
$$
2k h(t_1) \langle g, g \rangle = \langle kv(t_1)-\partial_tv(t_1), g \rangle.
$$
Thus $|h(t_1)| \les \delta$ and $\|(\tilde \psi_0, \tilde \psi_1)\|_{\langle x \rangle^{-1} \dot H^1 \times \langle x \rangle^{-1} L^2}$ is small. Thus $(\tilde \psi_0, \tilde \psi_1) \in \tilde{\mc N}_0$ for sufficiently small $\delta$.

Furthermore, as stated above, the solution to (\ref{wave}) $\psi(t) = u(t)+\phi(a(t))$ having $(\psi(\psi_0, \psi_1)(t_1) = \tilde \psi_0+h(t_1)g, \partial_t \psi(\psi_0, \psi_1)(t_1) = \tilde \psi_1-h(t_1)kg)$ as initial data has a small $\|(u, a)\|_X$ norm. Then, by Proposition \ref{prop_stab}, it must be the case that $h(t_1) = h(\tilde \psi_0, \tilde \psi_1)$. Thus $(\psi(\psi_0, \psi_1)(t_1), \partial_t \psi(\psi_0, \psi_1)(t_1)) \in \tilde{\mc N}$.

It follows that $\tilde{\mc N}$ is locally in time invariant.

Next, we prove that $\tilde {\mc N}$ is a centre-stable manifold for (\ref{wave}) --- or more precisely that $\tilde {\mc N} - (\phi, 0)$ is a centre-stable manifold for equation (\ref{eq_v})
$$\begin{aligned}
&\partial_t^2 v(t) - \Delta v(t) + V v(t) = N(v(t), \phi),\\
&v(0) = \psi_0 + h(\psi_0, \psi_1) g - \phi,\ \partial_t v(0) = \psi_1 - h(\psi_0, \psi_1) kg,
\end{aligned}$$
where $v(t) = \psi(t) - \phi$, relative to a small $\langle x \rangle^{-1} \dot H^1 \times \langle x \rangle^{-1} L^2$ neighborhood $\mc V = \{(v_0, v_1) \mid \|(v_0, v_1)\|_{\langle x \rangle^{-1} \dot H^1 \times \langle x \rangle^{-1} L^2} < \delta_0\}$ of the origin.

We prove that $\tilde {\mc N} - (\phi, 0)$ has the three defining properties listed in Definition \ref{centr}: $\tilde{\mc N} - (\phi, 0)$ is $t$-invariant with respect to $\mc V$, $\pi^{cs}(\tilde{\mc N} - (\phi, 0))$ contains a neighborhood of $0$ in $X^c \oplus X^s$, and $(\tilde{\mc N} -(\phi, 0))\cap W^u = \{0\}$.

The $t$-invariance of $\tilde{\mc N}-(\phi, 0)$ is a consequence of the local in time invariance proved above and has the same proof. Namely, as long as $\|(v(t), \partial_t v(t)\|_{\langle x \rangle^{-1} \dot H^1 \times \langle x \rangle^{-1} L^2}$ is kept sufficiently small, one can use the local existence theory to prove that if $(v(t), \partial_t v(t)) \in \tilde{\mc N} - (\phi, 0)$ then $(v(t+\delta t), \partial_t v(t + \delta t)) \in \tilde{\mc N} - (\phi, 0)$ for all $|\delta t| \leq \delta$, with constant $\delta$. This can be continued with an arbitrary number of steps of equal size, for as long as $\|(v(t), \partial_t v(t))\|_{\langle x \rangle^{-1} \dot H^1 \times \langle x \rangle^{-1} L^2}$ is controlled; moreover, this works both forward and backward in time.

Next, note that $\pi_{cs}((\psi_0 + h(\psi_0, \psi_1)g - \phi, \psi_1 - h(\psi_0, \psi_1) kg)) = (\psi_0 - \phi, \psi_1) \in \tilde{\mc N}_0 \equiv X^c \oplus X^s$ and $|h(\psi_0, \psi_1)| \les (\|\psi_0-\phi\|_{\langle x \rangle^{-1} \dot H^1} + \|\psi_1\|_{\langle x \rangle^{-1} L^2})^2$. Thus $\pi_{cs}(\tilde{\mc N} - (\phi, 0))$ covers a whole neighborhood of zero in $X^c \oplus X^s$.

Finally, assume that $\tilde{\mc N} - (\phi, 0)$ contained an unstable solution $v$. By Definition \ref{unstable}, $v(t)$ then exists for all $t\leq 0$, $\|(v(t), \partial_t v(t))\|_{\langle x \rangle^{-1} \dot H^1 \times \langle x \rangle^{-1} L^2} < \delta_0$ for some small $\delta_0$ and all $t\leq 0$, and $v$ decays exponentially as $t \to -\infty$, meaning that there exists $C_1>0$ such that for all $t \leq 0$ $\|(v(t), \partial_t v(t))\|_{\langle x \rangle^{-1} \dot H^1 \times \langle x \rangle^{-1} L^2} \les ~e^{C_1 t}$.

Note that in fact it suffices to assume any rate of decay as $t \to -\infty$. We also assume that $v \not \equiv 0$ in order to obtain a contradiction.

Since the norm $\|(v(t), \partial_t v(t))\|_{\langle x \rangle^{-1} \dot H^1 \times \langle x \rangle^{-1} L^2}$ is controlled for all $t \leq 0$ by a small constant, we obtain proceeding step by step that $(v(t), \partial_t v(t)) \in \mc{\tilde N}-\phi$ for all $t \leq 0$.

Then due to preservation of energy, as expressed in Theorem~\ref{main_theorem}, starting at time $t \leq 0$,
$$
\|(v(0), \partial_t v(0))\|_{\dot H^1 \times L^2} \les \|(v(t), \partial_t v(t))\|_{\langle x \rangle^{-1}\dot H^1 \times \langle x \rangle^{-1}L^2}.
$$
However, as $t \to -\infty$ the latter norm goes to zero. This leads to a contradiction if $v \not \equiv 0$.

\end{proof}

\subsection{Wiener spaces}

\begin{definition}
For a Banach lattice $X$, let the space $\V_X$ consist of kernels $T(x, y, t)$ such that, for each pair $(x, y)$, $T(x, y, t)$ is a finite measure in $t$ on $\R$ and $M(T)(x, y):=\int_{\R} \dd |T(x, y, t)|$ is an $X$-bounded operator.
\end{definition}

$\V_X$ is an algebra under
$$
(T_1 \circ T_2) (x, z, t) := \int T_1(x, y, s) T_2(y, z, t-s) \dd y \dd s.
$$
Elements of $\V_X$ have Fourier transforms
$$
\widehat T(x, y, \lambda) := \int_{\R} e^{-it\lambda} \dd T(x, y, t)
$$
and, for every $\lambda \in \R$, $T_1^{\vee}(\lambda) \circ T_2^{\vee}(\lambda) = (T_1 \circ T_2)^{\vee}(\lambda)$.

The space $\V_X$ contains elements of the form $\delta_0(t) T(x, y)$, whose Fourier transform is constantly the operator $T(x, y) \in \B(X)$. In particular, rank-one operators $\delta_0(t) \phi(x) \otimes \psi(y)$ are in $\V_p$ when $\psi \in X^*$, $\phi \in X$. More generally, $f(t) T(x, y) \in \V_X$ if $f(t) \in L^1$ and $T \in \B(X)$.

Moreover, for two Banach spaces $X$ and $Y$ of functions on $\R^3$, we also define the space $\V_{X, Y}$ of kernels $T(x, y, t)$ such that $M(T)(x, y)$ is a bounded operator from $X$ to $Y$. The set of such operators forms an algebroid.

For example, note that $R_{0}(\lambda^2) \in \V_{L^{3/2, 1}, L^\infty} \cap \V_{L^1, L^{3, \infty}}$ and $\partial_{\lambda} R_{0}(\lambda^2) \in \V_{L^1, L^\infty}$. Indeed, the Fourier transform in $\lambda$ is
$$
R_{0}^\vee(t)(x, y) = (4\pi t)^{-1} \delta_{|x-y|}(t),
$$
so $\ds M(R_{0}) = \frac 1 {4\pi|x-y|}$. Clearly $\ds \frac 1 {4\pi|x-y|}$ is in $\B(L^{3/2, 1}, L^\infty) \cap \B(L^1, L^{3, \infty})$.

Likewise, $(\partial_\lambda R_{0})^\vee(t)(x, y) = (4\pi)^{-1} \delta_{|x-y|}(t)$, so $\ds M(\partial_\lambda R_0) = (4\pi)^{-1} 1 \otimes 1$, which is in $\B(L^1, L^\infty)$.

\subsection{Regular points and regular Hamiltonians}
Before examining the possible singularity at zero, we study what happens at regular points in the spectrum.

The following two properties play an important part in the study:
\begin{lemma}\lb{lemma_22} Let $T_0(\lambda) = V R_0((\lambda+i0)^2)$, i.e.\ $\widehat T_0(t)=V(x)(4\pi t)^{-1} \delta_{|x-y|}(t)$.
\begin{list}{\labelitemi}{\leftmargin=1em}
\item[C1.] $\lim_{R\to \infty} \|\chi_{|t|\geq R}(t) \widehat T_0(t)\|_{\V_{L^1} \cap \V_{L^{3/2, 1}}} =0$.
\item[C2.] For some $n \geq 1$ $\lim_{\epsilon \to 0} \|\widehat T_0^n(t+\epsilon) - \widehat T_0^n(t)\|_{\V_{L^1} \cap \V_{L^{3/2, 1}}} = 0$.
\end{list}
\end{lemma}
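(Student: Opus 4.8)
\subsection*{Proof proposal for Lemma \ref{lemma_22}}

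The plan is to reduce both conditions, via the explicit Fourier transform already recorded above, to estimates on concrete integral kernels on $\R^3$. Since $\widehat T_0(x,y,t)=\dfrac{V(x)}{4\pi|x-y|}\,\delta_{|x-y|}(t)$ is a single point mass sitting exactly on the cone $t=|x-y|$, we have $M(\widehat T_0)(x,y)=\dfrac{|V(x)|}{4\pi|x-y|}$, and $T_0\in\V_{L^1}\cap\V_{L^{3/2,1}}$ already follows by factoring $T_0=M_{|V|}\circ(\text{convolution by }\tfrac{1}{4\pi|z|})$ and composing the mapping properties of $R_0$ recorded above ($|z|^{-1}\colon L^{3/2,1}\to L^\infty$ and $\colon L^1\to L^{3,\infty}$) with $V\in L^{3/2,1}$.

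For C1, the truncation $\chi_{|t|\ge R}$ is simply $\chi_{|x-y|\ge R}$, so one must show the kernel $S_R:=\dfrac{|V(x)|}{4\pi|x-y|}\chi_{|x-y|\ge R}$ tends to $0$ in $\B(L^1)\cap\B(L^{3/2,1})$. The $L^1$ norm is $\sup_y\int_{|x-y|\ge R}\dfrac{|V(x)|}{4\pi|x-y|}\dd x$; I would split the $x$-integral at $|x|=R/2$: on $|x|\ge R/2$ use $\tfrac1{|x-y|}\le\tfrac{2\langle x\rangle}{R|x-y|}$ together with $\langle x\rangle V\in L^{3/2,1}\subset\mc K$ (the embedding by Lorentz--H\"older against $|x-y|^{-1}\in L^{3,\infty}$) to bound this piece by $\les R^{-1}\|\langle x\rangle V\|_{L^{3/2,1}}$, while on $|x|<R/2$ the constraint $|x-y|\ge R$ forces $|x-y|^{-1}\le R^{-1}$ and $\langle x\rangle^{-1}\chi_{|x|<R/2}\in L^{3,\infty}$ uniformly in $R$, again giving $\les R^{-1}\|\langle x\rangle V\|_{L^{3/2,1}}$. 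The remaining ($L^{3/2,1}$, or equivalently Kato) bound I would handle along the same lines: factor $S_R=M_{|V|}\circ(\text{convolution by }\tfrac{\chi_{|z|\ge R}}{4\pi|z|})$, split $|V|=|V|\chi_{|x|>N}+|V|\chi_{|x|\le N}$, control the tail by $\les\|V\chi_{|x|>N}\|_{L^{3/2,1}}$ (made small by choosing $N$ large first) and then extract an $R^{-1}$ gain from $|x|\le N<R/2$, $|x-y|\ge R$ in the compactly supported part, using the $R_0$-type mapping bounds into $L^\infty$. It is cleanest to phrase this endpoint in the Kato space $\mc K_0$ of the Remark following Theorem~\ref{main_theorem}, which is the natural home for these estimates.

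For C2 the idea is that, although $\widehat T_0$ is singular in $t$, the $n$-fold composition $\widehat T_0^n$ (convolution in $t$, composition in $x$) becomes absolutely continuous in $t$ once $n$ is large enough. Already
$$
\widehat T_0^2(x,z,t)=\frac{V(x)}{(4\pi)^2}\int_{\{|x-y|+|y-z|=t\}}\frac{V(y)}{|x-y|\,|y-z|}\,\frac{\dd\sigma(y)}{|\dl_y(|x-y|+|y-z|)|}
$$
is a density supported on $t\ge|x-z|$ (finite speed of propagation); iterating, for $n$ large enough that the intermediate sphere-convolutions absorb the $L^{3/2,1}$-regularity of $V$ into a locally bounded and then integrable-in-$t$ density, one obtains $\widehat T_0^n(x,z,t)=k_n(x,z,t)\dd t$ with $M(\widehat T_0^n)(x,z)=\int_\R|k_n(x,z,t)|\dd t\in\B(L^1)\cap\B(L^{3/2,1})$ (the latter inherited from the algebra property of $\V_{L^1}\cap\V_{L^{3/2,1}}$ and $\widehat T_0\in\V_{L^1}\cap\V_{L^{3/2,1}}$). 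Absolute continuity then turns the translation difference into $k_n(\cdot,\cdot,t+\epsilon)-k_n(\cdot,\cdot,t)$, and I would bound its $\V$-norm by cutting $t$ at a large $R$: the tail is controlled, uniformly in $\epsilon$, by a C1-type estimate for $\widehat T_0^n$ (compose the C1 bound for $\widehat T_0$ with the $\V$-boundedness of $\widehat T_0^{n-1}$), and on $|t|\le R$ one uses a quantitative modulus of continuity of $t\mapsto k_n(x,z,t)$ together with a pointwise domination by an $X$-bounded kernel, so that the operator norm of the difference also tends to $0$.

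The main obstacle is twofold. In C1 the genuinely nontrivial step is the $L^{3/2,1}$ (Kato) operator-norm bound: unlike the $L^1$ case there is no one-line Schur test, and the decay in $R$ must be extracted by combining the $R^{-1}$ gain coming from $|x-y|\ge R$ with the weight in $\langle x\rangle V\in L^{3/2,1}$ through a ``double Kato'' estimate of the form $\int\frac{\langle x\rangle|V(x)|}{|x-z|\,|x-y|}\dd x\les\frac{\|\langle x\rangle V\|_{L^{3/2,1}}}{|y-z|}$. In C2 the heart of the matter is identifying the right $n$ and proving the quantitative $t$-continuity of $k_n$ in a form compatible with Lorentz operator norms; the delicate region is the light cone $t=|x-z|$, where the level surface $\{|x-y_1|+\dots+|y_{n-1}-z|=t\}$ degenerates to a segment and the density can blow up --- it is precisely the repeated convolutions (large $n$) that render this singularity integrable and the whole kernel translation-continuous.
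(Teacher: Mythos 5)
Your C1 argument is a genuinely different route from the paper's and, modulo the sketchy $L^{3/2,1}$ part, looks workable: you attack the tail $\chi_{|x-y|\geq R}\,|V(x)|/|x-y|$ directly using the weighted hypothesis $\langle x\rangle V\in L^{3/2,1}$, splitting at $|x|=R/2$. The paper does something simpler and under a weaker hypothesis on $V$: for bounded compactly supported $V$ the truncated kernel obeys the crude bound $|x-y|^{-1}\chi_{|x-y|\ge R}\le R^{-1}$, giving an operator-norm bound $\les R^{-1}\|V\|_1$, and then C1 is propagated to general $V\in L^{3/2,1}$ by density (approximating $V$ in $L^{3/2,1}$ and using the uniform bound $\|\widehat T_0[V]\|_{\V_{L^1}\cap\V_{L^{3/2,1}}}\les\|V\|_{L^{3/2,1}}$ on the error). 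Your version buys a quantitative rate $O(R^{-1})$ but needs the extra decay of $V$; the paper's buys generality and brevity.

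Your C2 argument, however, has a genuine gap. The entire burden of C2 is exactly what you flag at the end: to prove, for some explicit $n$, that the absolutely continuous density $k_n(x,z,t)$ of $\widehat T_0^{\,n}$ is translation-continuous in $t$ in the $\B(L^1)\cap\B(L^{3/2,1})$ operator-norm topology, uniformly down to the light cone $t=|x-z|$ where the convolution surface degenerates. You outline \emph{why} that should hold for large $n$ (enough sphere-convolutions smooth out the singularity) but you never identify an $n$ or prove a modulus of continuity; you even concede this is ``the delicate region'' and ``the heart of the matter.'' The paper avoids this physical-space degeneracy entirely by working on the Fourier side: after the C1-based reduction to a compactly supported cutoff $\chi(\rho)\widehat T_0(\rho)$, it computes the inverse Fourier transform as the oscillating kernel $V(x)\,e^{i\lambda|x-y|}/(4\pi|x-y|)\cdot\chi(|x-y|)$, invokes Stein's oscillatory lemma to get $L^p\to L^p$ decay $\les\lambda^{-3/p'}$ for $p\in(1,4/3]$, and hence obtains $\|((\chi\widehat T_0)^\vee(\lambda))^N\|_{\B(X)}\les\lambda^{-3(N-2)/p'}$ for $X=L^1,L^{3/2,1}$. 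Choosing $N>2+2p'/3$ makes the derivative $\partial_\rho(\chi\widehat T_0)^N$ uniformly $\B(X)$-bounded, and since the variable $\rho$ ranges over a compact set, this gives Lipschitz (hence translation) continuity of $(\chi\widehat T_0)^N$, i.e.\ C2 for the cutoff piece; C1 plus the triangle inequality then transfers this to $\widehat T_0^N$. That argument supplies precisely what is missing from your sketch: an explicit $n$ and a quantitative continuity estimate that does not see the light-cone degeneracy at all.
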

These properties are shown in the course of the proof of Theorem 5 in \cite{becgol}. For the reader's convenience we reproduce the proof below.
\begin{proof}[Proof of Lemma \ref{lemma_22}]
Suppose $V$ is a bounded function with compact support in a set of
diameter $D$.  It follows that for $R > 2D$
$$
\int_{\R^3} \int_{|t|\geq R} |\widehat T_0(t) f(x)| \dd x \dd t \leq \frac 1 {4\pi} \int\int_{|x-y|\geq R}\frac {|V(x)|}{|x-y|} |f(y)| \dd y \dd x \les R^{-1} \|V\|_1 \|f\|_1
$$
and property C1 is preserved by taking the limit of $V$ in $L^{3/2, 1}$.

Next, fix $p \in (1, 4/3]$ and assume that $V$ is bounded and of compact support. Then $T_0(\lambda)$, having a kernel equal in absolute value to $\ds\frac{|V(x)|}{4\pi|x-y|}$, is uniformly bounded in $\B(X, L^p)$, $\B(L^p, X)$, and $\B(L^p)$ for all $\lambda$, where $X$ is $L^1$ or $L^{3/2, 1}$.

Since $V$ is bounded and of compact support, $\widehat T_0$ also has the local and distal properties
$$
\lim_{\epsilon \to 0} \Big\|\chi_{<\epsilon}(|x-y|) \frac{V(x)}{|x-y|}\Big\|_{\B(L^1) \cap \B(L^{3/2, 1})} = 0
$$
and
$$
\lim_{R \to \infty} \Big\|\chi_{>R}(|x-y|) \frac{V(x)}{|x-y|}\Big\|_{\B(L^1) \cap \B(L^{3/2, 1})} = 0.
$$
Combined with condition C1, this implies that for any $\epsilon>0$ there exists a cutoff function $\chi$ compactly supported in $(0, \infty)$ such that
$$
\|\chi(\rho) \widehat T_0(\rho) - \widehat T_0(\rho)\|_{\V_{L^1} \cap \V_{L^{3/2, 1}}} < \epsilon.
$$
Thus, it suffices to show that condition C2 holds for $\chi(\rho) \widehat T_0(\rho)$, where $\chi$ is a compactly supported cutoff function in $(0, \infty)$.

The Fourier transform of $\chi(\rho) \widehat T_0(\rho)$ has the form
\be\lb{ft}
(\chi(\rho) \widehat T_0(\rho))^{\vee}(\lambda) = V(x) \frac {e^{i\lambda|x-y|}}{4\pi|x-y|} \chi(|x-y|).
\ee
Such oscillating kernels have decay in the $L^p$ operator norm for $p>1$. By the Lemma of \cite{stein}, page 392,
$$
\|(\chi(\rho) \widehat T_0(\rho))^{\vee}(\lambda) f\|_{L^p} \les \lambda^{-3/p'} \|f\|_{L^p}.
$$
Therefore
$$
\|\big((\chi(\rho) \widehat T_0(\rho))^{\vee}(\lambda)\big)^N f\|_{X} \les \lambda^{-3(N-2)/p'} \|f\|_{X}.
$$
For $N>2+2p'/3$, this shows that $\partial_{\rho} (\chi(\rho) \widehat T(\rho))^N$ are uniformly bounded operators in $\B(X)$, where $X$ is either $L^1$ or $L^{3/2, 1}$. Since $(\chi(\rho) \widehat T(\rho))^N$ has compact support in $\rho$, this in turn implies C2.

For general $V \in L^{3/2, 1}$, choose a sequence of bounded compactly supported approximations for which C2 holds, as shown above. By a limiting process, we obtain that C2 also holds for $V$.
\end{proof}

\begin{lemma}\lb{lemma23}
Let $T(\lambda) = I + V R_0((\lambda+i0)^2)$. Assume that $V \in L^{3/2, 1}$ and let $\lambda_0 \ne 0$. Consider a cutoff function $\chi$. Then, for $\epsilon <<1$, $(\chi((\lambda-\lambda_0)/\epsilon) T(\lambda)^{-1})^{\wedge} \in \V_{L^1} \cap \V_{L^{3/2, 1}}$.

Likewise, infinity is a regular point: for $R >> 1$ $((1-\chi(\lambda/R)) T(\lambda)^{-1})^\wedge \in \V_{L^1} \cap \V_{L^{3/2, 1}}$.
\end{lemma}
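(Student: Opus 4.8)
The plan is to invert $T(\lambda)$ inside the Banach algebra $\V:=\V_{L^1}\cap\V_{L^{3/2,1}}$ by means of the operator‑valued Wiener lemma of \cite{becgol}, whose two hypotheses are precisely the properties C1 and C2 of Lemma \ref{lemma_22}. First observe that $T(\lambda)=\widehat\tau(\lambda)$ for $\tau:=\delta_0(t)\otimes I+\widehat T_0(t)\in\V$: indeed $M(\widehat T_0)(x,y)=\tfrac{|V(x)|}{4\pi|x-y|}$ defines a bounded operator on $L^1$ and on $L^{3/2,1}$ (Hölder in Lorentz spaces, using $|x|^{-1}\in L^{3,\infty}$ and $V\in L^{3/2,1}$), and $\delta_0\otimes I\in\V$. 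A global application of the Wiener lemma would need $T(\lambda)$ invertible for every $\lambda$, which fails at $\lambda=0$; this is why one localizes by $\chi((\lambda-\lambda_0)/\epsilon)$ and by $1-\chi(\lambda/R)$.

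Near $\lambda_0\ne0$: here $\lambda_0$ is a regular point, i.e.\ $T(\lambda_0)$ is invertible on $L^1$ and on $L^{3/2,1}$ (which follows, as in \cite{becgol}, from the Fredholm property of a power of $VR_0(\lambda_0^2)$ together with the absence of embedded eigenvalues and resonances at positive energy). I would first note that $T_0(\lambda)-T_0(\lambda_0)\to0$ in $\B(L^1)\cap\B(L^{3/2,1})$ as $\lambda\to\lambda_0$: split the kernel $\tfrac{V(x)(e^{i\lambda|x-y|}-e^{i\lambda_0|x-y|})}{4\pi|x-y|}$ at $|x-y|=M$, the far part being $\les\big\|\chi_{|x-y|\ge M}\tfrac{V(x)}{|x-y|}\big\|_{\B(L^1)\cap\B(L^{3/2,1})}\to0$ by the distal estimate from the proof of Lemma \ref{lemma_22}, the near part being $\les|\lambda-\lambda_0|\,M\,\|V\|_{L^{3/2,1}}$. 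Choosing $M$ then $\epsilon$ small, on $\supp\chi((\cdot-\lambda_0)/\epsilon)$ one gets $\|T(\lambda_0)^{-1}(T_0(\lambda)-T_0(\lambda_0))\|_{\B(L^1)\cap\B(L^{3/2,1})}<1/2$. Pick $\chi_2$ smooth with $\chi_2\equiv1$ on $\supp\chi((\cdot-\lambda_0)/\epsilon)$ and $\supp\chi_2$ inside that region, and set $\tilde T(\lambda):=T(\lambda_0)+\chi_2(\lambda)\big(T_0(\lambda)-T_0(\lambda_0)\big)$. Then (i) $\tilde T=\widehat{\tilde\tau}$ with $\tilde\tau\in\V$, since the $\chi_2$‑term equals $[\chi_2]^\vee\circ\widehat T_0-[\chi_2]^\vee(t)\,T_0(\lambda_0)$, a convolution in $t$ of a Schwartz function with $\widehat T_0\in\V$ plus an $L^1$ function times a bounded operator; (ii) $\tilde T(\lambda)$ is invertible for every $\lambda$, equalling $T(\lambda_0)$ where $\chi_2=0$ and $T(\lambda_0)\big(I+\chi_2(\lambda)T(\lambda_0)^{-1}(T_0(\lambda)-T_0(\lambda_0))\big)$ with convergent Neumann series otherwise; (iii) $\tilde\tau-\delta_0\otimes T(\lambda_0)$ is absolutely continuous in $t$ (the smoothing by $[\chi_2]^\vee$ destroys the atom of $\widehat T_0$ at $t=|x-y|$) and its $t$‑tails vanish in $\V$‑norm (distal estimate again), so C1 and C2 hold. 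Factoring out the invertible bounded operator $T(\lambda_0)$ and applying the Wiener lemma gives $\tilde T^{-1}=\widehat\sigma$ with $\sigma\in\V$; since $\tilde T=T$ on $\supp\chi((\cdot-\lambda_0)/\epsilon)$, this yields $\big(\chi((\lambda-\lambda_0)/\epsilon)T(\lambda)^{-1}\big)^\wedge\in\V$.

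Near infinity: the new ingredient is invertibility of $T(\lambda)$ for $|\lambda|$ large. By the oscillatory‑decay estimate from the proof of Lemma \ref{lemma_22} (\cite{stein}), $T_0(\lambda)$ maps $L^1$ and $L^{3/2,1}$ boundedly into $L^p$ ($1<p\le4/3$) and back, with $\|T_0(\lambda)^N\|_{\B(L^p)}\les|\lambda|^{-3(N-2)/p'}$, so $\|T_0(\lambda)^N\|_{\B(L^1)\cap\B(L^{3/2,1})}\to0$ as $|\lambda|\to\infty$ for $N$ fixed large; the identity $(I+B)\sum_{j<N}(-B)^j=I+(-1)^{N-1}B^N$ with $B=s\,T_0(\lambda)$, $0\le s\le1$, then gives $I+s\,T_0(\lambda)$ invertible with uniformly bounded inverse for $|\lambda|\ge cR$, $R$ large. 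I would then repeat the modification argument with $\tilde T(\lambda):=I+(1-\chi_2(\lambda/R))T_0(\lambda)$, where $1-\chi_2(\cdot/R)$ is supported in $\{|\lambda|\ge cR\}$ and $\equiv1$ on $\supp(1-\chi(\cdot/R))$: then $\tilde T=\delta_0\otimes I+\big(\delta_0\otimes I-[\chi_2(\cdot/R)]^\vee\otimes I\big)\circ\widehat T_0$ inherits C1 and C2 from Lemma \ref{lemma_22} (convolving $\widehat T_0$, resp.\ $\widehat{T_0^n}$, with the fixed finite measure $\delta_0-[\chi_2(\cdot/R)]^\vee$ preserves both properties), is invertible for all $\lambda$ by the above, hence $\tilde T^{-1}=\delta_0\otimes I+\sigma$ with $\sigma\in\V$ by the Wiener lemma, and since $\tilde T=T$ on $\supp(1-\chi(\cdot/R))$ we get $\big((1-\chi(\lambda/R))T(\lambda)^{-1}\big)^\wedge\in\V$.

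The main obstacle is twofold. First, verifying that the operator‑valued Wiener lemma genuinely applies, i.e.\ that conditions C1 and C2 of Lemma \ref{lemma_22} survive the localization: the modification $\tilde T$ is engineered so that the only surviving atom in $t$ is the invertible $\delta_0\otimes T(\lambda_0)$ (resp.\ $\delta_0\otimes I$), which is then factored out, while the rest is either absolutely continuous (near $\lambda_0$, from the $[\chi_2]^\vee$ smoothing) or a convolution of $\widehat T_0$ against a fixed finite measure (near $\infty$, where C2 of Lemma \ref{lemma_22} is essential). Second, the invertibility of $T(\lambda)$ near $\infty$ on $L^1$, where there is no direct oscillatory gain — one must first pass to the $N$‑fold composition and route through $L^p$ with $p>1$, as in the proof of Lemma \ref{lemma_22}; the analogous statement near $\lambda_0$ rests on $\lambda_0$ being a regular point.
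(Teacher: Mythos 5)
Your proof is correct in substance, but it is organized differently from the paper's. The paper does not invoke the abstract operator--valued Wiener lemma at all in the proof of Lemma \ref{lemma23}: it runs a \emph{direct Neumann series} after establishing that the localized perturbation is small in the $\V$--norm. Near $\lambda_0\ne 0$ it forms $S_{\epsilon_0}(\lambda)=\chi(\cdot/\epsilon_0)(VR_0(\lambda^2)-VR_0(\lambda_0^2))$, shows $\|\widehat S_{\epsilon_0}\|_{\V_{L^1}\cap\V_{L^{3/2,1}}}\to 0$ as $\epsilon_0\to 0$ using property C1 (via the explicit kernel computation $M(S_{\epsilon_0})(x,y)\les \tfrac{|V(x)|}{|x-y|}\min(1,\epsilon_0|x-y|)$ split at $|x-y|=M$), and then inverts by a geometric series $T(\lambda_0)^{-1}\sum_k(-1)^k(S_{\epsilon_0}T(\lambda_0)^{-1})^k$ that converges \emph{in the $\V$--algebra}, not merely pointwise. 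Near infinity it does the same with the convolved difference $S_L$ and property C2, plus the $N$--th power trick if only $T_0^N$ satisfies C2. In contrast, you build a global modification $\tilde T$ that agrees with $T$ on the relevant cutoff, verify pointwise invertibility and C1/C2 for $\tilde T$, and invoke the Wiener lemma of \cite{becgol} as a black box. What your route buys is that you only need \emph{pointwise} smallness of $T(\lambda_0)^{-1}(T_0(\lambda)-T_0(\lambda_0))$, which is weaker and easier to check than $\V$--norm smallness; what it costs is that you must verify C1/C2 for the modified kernel, including the observation that convolving $\widehat T_0$ (resp.\ $\widehat{T_0^n}$) with a Schwartz function (resp.\ the finite measure $\delta_0-[\chi_2(\cdot/R)]^\vee$) preserves both, which you do correctly. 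Both arguments are sound.

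Two small things to fix in the writeup. First, the choice ``$\chi_2\equiv1$ on $\supp\chi((\cdot-\lambda_0)/\epsilon)$ and $\supp\chi_2$ inside that region'' is self--contradictory; you mean $\supp\chi_2$ contained in the (slightly larger) set where $\|T(\lambda_0)^{-1}(T_0(\lambda)-T_0(\lambda_0))\|_{\B(L^1)\cap\B(L^{3/2,1})}<1/2$, with $\chi_2\equiv1$ on the smaller set $\supp\chi((\cdot-\lambda_0)/\epsilon)$. Second, the regularity of $\lambda_0\ne0$ (invertibility of $T(\lambda_0)$ on $L^1$ and $L^{3/2,1}$) is stated but not proved in the paper either; you are right that it rests on Fredholm theory for $(VR_0(\lambda_0^2))^N$ and the absence of embedded eigenvalues/resonances, and it is fine to record it as an input rather than re--derive it here.
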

\begin{proof}[Proof of Lemma \ref{lemma23}]
Let $S_\epsilon(\lambda) = \chi(\lambda/\epsilon)(V R_0((\lambda+i0)^2) - VR_0((\lambda_0+i0)^2))$. A simple argument based on condition C1 shows that $\lim_{\epsilon \to 0} \|\widehat S_\epsilon\|_{\V_{L^1} \cap \V_{L^{3/2, 1}}}=0$. Then, for $\epsilon < \epsilon_0/2$,
$$\begin{aligned}
\chi(\lambda/\epsilon) T^{-1}(\lambda) &= \chi(\lambda/\epsilon) \big(T(\lambda_0) + \chi(\lambda/\epsilon_0)(V R_0((\lambda+i0)^2) - V R_0((\lambda_0+i0)^2))\big)^{-1}\\
&=\chi(\lambda/\epsilon) T(\lambda_0)^{-1}(I + S_{\epsilon_0}(\lambda) \widehat T(\lambda_0)^{-1})^{-1}\\
&=\chi(\lambda/\epsilon) T(\lambda_0)^{-1} \sum_{k=0}^\infty (-1)^k (S_{\epsilon_0}(\lambda) T(\lambda_0)^{-1})^k.
\end{aligned}$$
The series above converges for sufficiently small $\epsilon_0$, showing that $(\chi(\lambda/\epsilon) T^{-1}(\lambda))^\wedge \in \V_{L^1} \cap \V_{L^{3/2, 1}}$.

At infinity, for any real number $L$ one can express the Fourier transform of $(1 - \chi(\lambda/L)) T(\lambda)$ as
\begin{equation*}
S_L(\rho) = \big(\widehat T - L\check{\eta}(L\,\cdot\,) \ast \widehat T\big)(\rho) = 
\int_\R L \check{\eta}(L\sigma) [\widehat T(\rho) - \widehat T(\rho-\sigma)]\,d\sigma
\end{equation*}
Thanks to condition C2, the norm of the right-hand integral vanishes as $L \to \infty$.  This makes it possible to construct an inverse Fourier transform for
\begin{equation*}
(1 - \chi(\lambda/2L))\big(I + T(\lambda)\big)^{-1} 
= (1 - \chi(\lambda/2L)) 
 \sum_{k=0}^\infty (-1)^k \Big(\big(1 - \chi(\lambda/L))T(\lambda)\Big)^k
\end{equation*}
via a convergent power series expansion provided $L \ge L_1$.

If only $T^N$ satisfies condition C2 then one constructs an
inverse Fourier transform for $(1-\chi(\lambda/2L))
 (I \pm T^N(\lambda))^{-1}$ via this process and observes that
\begin{equation*}
(1 - \chi(\lambda/2L))\big(I + T(\lambda)\big)^{-1}
  = (1 - \chi(\lambda/2L))\big(I + (-T(\lambda))^N \big)^{-1}
\sum_{k=0}^{N-1}(-1)^k T^k(\lambda).
\end{equation*}
\end{proof}

We next consider the effect of singularities at zero.

\subsection{The effect of resonances}
Let
$$
Q = - \frac 1 {2\pi i} \int_{|z+1|=\delta} (V R_0(0) - z)^{-1} \dd z
$$
and $\ov Q = 1-Q$. Assuming that $H=-\Delta+V$ has only a resonance $\phi$ at zero, then
$$
Q = -V \phi \otimes \phi.
$$

The resonance $\phi$ satisfies the equation $\phi = - R_0(0) V \phi$. Since $\phi \in L^{3, \infty} \cap L^\infty$, $Q$ is bounded on $L^1$ and on $L^{3/2, 1}$, so $Q \in \W$. Moreover, $Q$ is in $\B(L^1, L^{3/2, 1})$ and in $\B(L^{3/2, 1}, L^1)$.

Note that, since
$$
e^{i\lambda|x-y|}-1 \les \min(1, \lambda |x-y|) \implies e^{i\lambda|x-y|}-1 \les \lambda^{\delta} |x-y|^\delta,
$$
one has
\be\lb{holder}
V(x) \Big(\frac {e^{i\lambda|x-y|}}{|x-y|} - \frac 1 {|x-y|}\Big) \les V(x) \lambda.
\ee
Thus,
when $V \in \langle x \rangle^{-1} L^{3/2, 1}$, $\widehat T(\lambda) = I + V R_0(\lambda)$ is Lipschitz continuous in $\B(L^1)$. This implies that, more generally, when $V \in L^{3/2, 1}$ $\widehat T(\lambda)$ is continuous in $\B(L^1)$.

Let
$$
K = (I + V R_0(0) + Q)^{-1} (I - Q).
$$
Then $K$ is the inverse of $I + VR_0(0)$ in $\B(L^1) \cap \B(L^{3/2, 1})$, in the sense that
\be\lb{inverse}
K(I + V R_0(0)) = (I+VR_0(0))K = I-Q := \ov Q.
\ee

The following lemma (Lemma 4.7 from Yajima \cite{yajima_disp}) is extremely useful in studying the singularity at zero. 
\begin{lemma}\lb{lemma_invers} Let $X = X_0+X_1$ be a direct sum decomposition of a vector space $X$. Suppose that a linear operator $L \in \B(X)$ is written in the form 
$$
L = \bpm
L_{00}&L_{01}\\
L_{10}&L_{11}.
\epm.
$$
in this decomposition and that $L^{-1}_{00}$ exists. Set 
$C = L_{11} - L_{10} L_{00}^{-1} L_{01}$. Then, $L^{-1}$ exists if and only if $C^{-1}$ exists. In this case
\be\lb{invers}
L^{-1}= \bpm L_{00}^{-1} + L_{00}^{-1} L_{01} C^{-1} L_{10} L_{00}^{-1}&-L_{00}^{-1} L_{01} C^{-1} \\ 
-C^{-1} L_{10} L_{00}^{-1} & C^{-1}
\epm.
\ee
\end{lemma}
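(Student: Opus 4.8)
The plan is to prove Lemma~\ref{lemma_invers} by the standard block (Schur‑complement) triangular factorization of $L$ relative to the decomposition $X = X_0 \oplus X_1$. First I would introduce the three auxiliary operators
$$
\mathcal L = \bpm I & 0 \\ L_{10} L_{00}^{-1} & I \epm,\qquad
\mathcal D = \bpm L_{00} & 0 \\ 0 & C \epm,\qquad
\mathcal U = \bpm I & L_{00}^{-1} L_{01} \\ 0 & I \epm,
$$
which all make sense because $L_{00}^{-1}$ exists by hypothesis, and claim the identity $L = \mathcal L\,\mathcal D\,\mathcal U$. This is verified by multiplying out: $\mathcal D\mathcal U$ has blocks $L_{00}$, $L_{01}$, $0$, $C$, and left–multiplying by $\mathcal L$ gives the $(1,1)$ block $L_{00}$, the $(1,2)$ block $L_{01}$, the $(2,1)$ block $L_{10}$, and the $(2,2)$ block $L_{10}L_{00}^{-1}L_{01} + C = L_{11}$, the last equality being precisely the definition of $C$. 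So the product equals $L$. This is the only computation in the proof and it is entirely routine.

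Next I would observe that $\mathcal L$ and $\mathcal U$ are block‑unitriangular, hence bounded and boundedly invertible on $X$, with
$$
\mathcal L^{-1} = \bpm I & 0 \\ -L_{10} L_{00}^{-1} & I \epm,\qquad
\mathcal U^{-1} = \bpm I & -L_{00}^{-1} L_{01} \\ 0 & I \epm
$$
(again a one‑line check). Since $\mathcal D = \mathcal L^{-1} L\,\mathcal U^{-1}$, the operator $L$ is invertible in $\B(X)$ if and only if $\mathcal D$ is; and $\mathcal D$ being block‑diagonal, it is invertible exactly when both diagonal blocks are. As $L_{00}$ is invertible by assumption, this happens precisely when $C^{-1}$ exists, which is the asserted equivalence.

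Finally, when $C^{-1}$ exists I would read off the formula by inverting the factorization in reverse order, $L^{-1} = \mathcal U^{-1}\mathcal D^{-1}\mathcal L^{-1}$ with $\mathcal D^{-1} = \mathrm{diag}(L_{00}^{-1}, C^{-1})$. Carrying out the two block products,
$$
L^{-1} = \bpm I & -L_{00}^{-1} L_{01} \\ 0 & I \epm
\bpm L_{00}^{-1} & 0 \\ -C^{-1} L_{10} L_{00}^{-1} & C^{-1} \epm
= \bpm L_{00}^{-1} + L_{00}^{-1} L_{01} C^{-1} L_{10} L_{00}^{-1} & -L_{00}^{-1} L_{01} C^{-1} \\ -C^{-1} L_{10} L_{00}^{-1} & C^{-1} \epm,
$$
which is exactly \eqref{invers}. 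There is no genuine obstacle in this argument; the only point worth a remark is the meaning of ``invertible'', and one simply notes that all the factors above and their inverses lie in $\B(X)$, so the equivalence and the formula are valid within the algebra of bounded operators (indeed the argument is purely algebraic and would work verbatim for a bare vector‑space direct sum).
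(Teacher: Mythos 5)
Your proof is correct; it is the standard block‑LDU (Schur complement) factorization argument, and every block multiplication checks out. The paper itself does not prove this lemma --- it quotes it as Lemma 4.7 of Yajima \cite{yajima_disp} --- so there is no in‑paper proof to compare against, but your argument is precisely the usual one and nothing is missing. (The only point worth flagging, which you do note, is that ``invertible'' should be read as invertible in $\B(X)$, i.e.\ the direct sum $X = X_0 \oplus X_1$ is topological so that the block entries and the triangular factors $\mathcal L$, $\mathcal U$ and their inverses are bounded; with that understanding the chain $L$ invertible $\Leftrightarrow$ $\mathcal D$ invertible $\Leftrightarrow$ $C$ invertible is exactly as you say.)
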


We next determine an expansion of $T(\lambda)^{-1} = (I + V R_0((\lambda+i0)^2))^{-1}$ in a neighborhood of zero.
\begin{lemma}\lb{lemma24} Assume that $\langle x \rangle V \in L^{3/2, 1}$ and that $H=-\Delta+V$ has a resonance $\phi$ at zero.
Then for $\lambda<<1$
$$
T(\lambda)^{-1} = (I + V R_0((\lambda+i0)^2))^{-1} = \widehat L(\lambda) - \lambda^{-1} \frac{4\pi i}{|\langle V, \phi\rangle|^2} V \phi \otimes \phi,
$$
where $(\chi(\lambda/\epsilon) L(\lambda))^\wedge \in \V_{L^1} \cap \V_{L^{3/2, 1}}$ locally and
$$
T^*(\lambda)^{-1} = (I + R_0((\lambda+i0)^2)V)^{-1} = \widehat L^*(\lambda) - \lambda^{-1} \frac{4\pi i}{|\langle V, \phi\rangle|^2} \phi \otimes V\phi,
$$
where $(\chi(\lambda/\epsilon) L^*(\lambda))^\wedge \in \V_{L^{\infty}} \cap \V_{L^{3, \infty}}$ for sufficiently small $\epsilon$.

\end{lemma}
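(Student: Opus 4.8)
The plan is to reduce the inversion of $T(\lambda)=I+VR_0((\lambda+i0)^2)$ near $\lambda=0$ to inverting a scalar (one–dimensional) Schur complement, following the Jensen--Kato/Yajima scheme already set up above. Write $T(\lambda)=T(0)+D(\lambda)$ with $T(0)=I+VR_0(0)$ and $D(\lambda)=V(R_0((\lambda+i0)^2)-R_0(0))$, whose kernel is $\frac{V(x)}{4\pi}\frac{e^{i\lambda|x-y|}-1}{|x-y|}$. By (\ref{holder}) and since $\langle x\rangle V\in L^{3/2,1}$ (hence $V\in L^1\cap L^{3/2,1}$, using $\langle x\rangle^{-1}\in L^{3,\infty}$), one has $D\in\V_{L^1}\cap\V_{L^{3/2,1}}$ and, after localizing, $\|(\chi(\cdot/\epsilon)D)^\wedge\|_{\V_{L^1}\cap\V_{L^{3/2,1}}}\to0$ as $\epsilon\to0$ --- the same local/distal splitting plus condition~C1 used in the proofs of Lemmas \ref{lemma_22} and \ref{lemma23}. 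I would then decompose the ambient space as $X_0\oplus X_1$ with $X_1=\Ran Q=\C\,V\phi$ and $X_0=\Ran\overline Q$, where $Q=-V\phi\otimes\phi$ is the Riesz projection at the resonance, write $T(\lambda)$ in the corresponding block form $L_{ij}(\lambda)$ (so $L_{00}=\overline QT(\lambda)\overline Q$, $L_{01}=\overline QT(\lambda)Q$, etc.), and invoke Lemma \ref{lemma_invers}.

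The blocks behave as follows. At $\lambda=0$, $L_{00}(0)=\overline QT(0)\overline Q$ is invertible on $X_0$ with inverse the operator $K$ of (\ref{inverse}); and because $T(0)(V\phi)=0$ and $T(0)^*\phi=(I+R_0(0)V)\phi=0$ (the resonance equation), the blocks $L_{01}(0)$, $L_{10}(0)$, $L_{11}(0)$ all vanish. Hence $L_{01}(\lambda)=\overline QD(\lambda)Q$, $L_{10}(\lambda)=QD(\lambda)\overline Q$, $L_{11}(\lambda)=QD(\lambda)Q$ are all $O(\lambda)$ and lie in $\V$ after localizing, while $L_{00}(\lambda)=L_{00}(0)+\overline QD(\lambda)\overline Q$ stays invertible for $|\lambda|\le\epsilon$ via the Neumann series $L_{00}(\lambda)^{-1}=\sum_{n\ge0}(-1)^n(K\overline QD(\lambda)\overline Q)^nK$, convergent in $\V_{L^1}\cap\V_{L^{3/2,1}}$ by the smallness above; call its sum $\widehat M_{00}(\lambda)$. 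The Schur complement $C(\lambda)=L_{11}(\lambda)-L_{10}(\lambda)\widehat M_{00}(\lambda)L_{01}(\lambda)$ lives on the line $X_1$; since $L_{10}\widehat M_{00}L_{01}=O(\lambda^2)$, $C(\lambda)=QD(\lambda)Q+O(\lambda^2)$, and $QD(\lambda)Q$ acts on $V\phi$ by the scalar $-\langle D(\lambda)V\phi,\phi\rangle=-\langle(R_0((\lambda+i0)^2)-R_0(0))V\phi,V\phi\rangle$ (after the normalization $\langle V,\phi^2\rangle=-1$ built into $Q$). Using $R_0(\lambda^2)(x,y)-R_0(0)(x,y)=\frac{i\lambda}{4\pi}+O(\lambda^2|x-y|)$ together with $\langle x\rangle V\phi\in L^1$ (which follows from $V\in L^1$ and the $\langle x\rangle^{-1}$ decay of the resonance, $\phi=-R_0(0)V\phi\les\langle x\rangle^{-1}$), this scalar equals $c(\lambda)=-\tfrac{i\lambda}{4\pi}\langle V,\phi\rangle^2+O(\lambda^2)$, with the $O(\lambda^2)$ remainder having Fourier transform in $\lambda$ a finite measure of finite second moment. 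Here one uses that $\langle V,\phi\rangle\ne0$ --- precisely the feature distinguishing a resonance from a zero eigenvalue --- so that $C(\lambda)$ is a nonzero multiple of $V\phi\otimes\phi$ for small $\lambda\ne0$, hence invertible, and $C(\lambda)^{-1}=\frac{1}{c(\lambda)}Q=-\frac{4\pi i}{\lambda|\langle V,\phi\rangle|^2}\,V\phi\otimes\phi+[\text{regular}]$.

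Assembling $T(\lambda)^{-1}$ from (\ref{invers}), the only contribution unbounded as $\lambda\to0$ is the $(1,1)$ entry $C(\lambda)^{-1}$, whose singular part is exactly $-\lambda^{-1}\frac{4\pi i}{|\langle V,\phi\rangle|^2}V\phi\otimes\phi$; this yields the claimed expansion with $\widehat L(\lambda):=T(\lambda)^{-1}+\lambda^{-1}\frac{4\pi i}{|\langle V,\phi\rangle|^2}V\phi\otimes\phi$. It then remains to check that $(\chi(\lambda/\epsilon)L(\lambda))^\wedge\in\V_{L^1}\cap\V_{L^{3/2,1}}$ locally. The summand $\widehat M_{00}$ and the regular part of $C(\lambda)^{-1}$ (controlled by the finite second moment of the $O(\lambda^2)$ remainder in $c$) are directly in $\V$; the cross terms $\widehat M_{00}L_{01}C^{-1}$, $C^{-1}L_{10}\widehat M_{00}$ and $\widehat M_{00}L_{01}C^{-1}L_{10}\widehat M_{00}$ each carry exactly one factor $C^{-1}$ (size $\lambda^{-1}$) against one factor $L_{01}$ or $L_{10}$ (size $\lambda$, equal to $\overline QD(\lambda)Q$ resp.\ $QD(\lambda)\overline Q$). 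The crucial observation is that $\lambda^{-1}\overline QD(\lambda)Q$ and $\lambda^{-1}QD(\lambda)\overline Q$ already lie in $\V_{L^1}\cap\V_{L^{3/2,1}}$: writing $\overline QD(\lambda)Q=-(\overline QD(\lambda)V\phi)\otimes\phi$ and noting that $\lambda^{-1}D(\lambda)V\phi$ has value $\frac{V(x)}{4\pi\lambda}\int\frac{e^{i\lambda|x-y|}-1}{|x-y|}V\phi(y)\,dy$ with Fourier transform in $\lambda$ an $L^1_t$--valued function of spatial norm $\les|V(x)|\,\|\langle y\rangle V\phi\|_1$ --- and, thanks to the $\langle x\rangle$--weight via the resonance decay of $\phi$, with enough spatial decay to map boundedly into $L^\infty$ and $L^{3,\infty}$ --- so the rank--one operators are in the required algebras. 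Finally, the adjoint statement follows by transposing everything: $Q^*=-\phi\otimes V\phi$, so the singular part becomes $-\lambda^{-1}\frac{4\pi i}{|\langle V,\phi\rangle|^2}\phi\otimes V\phi$ and $L^*(\lambda)=L(\lambda)^*\in\V_{L^\infty}\cap\V_{L^{3,\infty}}$, these being the duals of $L^1$ and $L^{3/2,1}$.

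I expect the main obstacle to be the last step: passing from mere boundedness to membership in $\V$ for the $O(\lambda)/\lambda$ and $O(\lambda^2)/\lambda$ pieces. This is exactly where the weight $\langle x\rangle V\in L^{3/2,1}$ (combined with the $\langle x\rangle^{-1}$ decay of the resonance) is consumed --- through first-- and second--moment integrability of the relevant Fourier transforms in $\lambda$, and through the $\B(L^{3/2,1})$ (as opposed to $\B(L^1)$) bounds, where the naive ``flat in $y$'' kernel estimates fail and one must extract decay in the second variable. Everything else is the routine block-matrix bookkeeping of Lemma \ref{lemma_invers} together with the already-established facts about $Q$, $K$, and the $\V$-algebra structure.
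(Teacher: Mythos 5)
Your proposal follows essentially the same route as the paper's proof: the Yajima--Jensen--Kato block decomposition against $Q=-V\phi\otimes\phi$ and $\overline Q$, inversion of the $\overline Q\,T(\lambda)\,\overline Q$ block by Neumann series (the paper's $T_{00}^{-1}\in\V_{\mathrm{loc}}$ argument), extraction of the scalar Schur complement $C(\lambda)\sim\lambda a^{-1}Q$ with $a^{-1}=\tfrac{|\langle V,\phi\rangle|^2}{4i\pi}$ nonzero precisely because $\phi$ is a resonance, inversion of $C$ to isolate the $\lambda^{-1}$ singular rank-one term, and the observation that each off-diagonal cross term in formula (\ref{invers}) pairs exactly one factor of $C^{-1}=O(\lambda^{-1})$ with one factor of $T_{01}$ or $T_{10}=O(\lambda)$, so it remains in the Wiener algebra; the adjoint statement follows by transposition. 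The one cosmetic simplification you make is to note up front that $L_{01}(0)=L_{10}(0)=L_{11}(0)=0$ from $T(0)V\phi=0$ and $T(0)^*\phi=0$, which the paper leaves implicit when it writes $T_{11}(\lambda)=Q(VR_0(\lambda^2)-VR_0(0))Q$; and you have also correctly located where the weight $\langle x\rangle V\in L^{3/2,1}$ is consumed --- in establishing $\lambda J(\lambda)\in\V_{L^1}$ and the $\B(L^{3/2,1})$ bounds for the $O(\lambda)/\lambda$ and $O(\lambda^2)/\lambda$ pieces, exactly as in the paper's discussion of $\tilde\psi,\tilde\psi^*$ and $c_1,c_2$.
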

\begin{proof}[Proof of Lemma \ref{lemma24}]
We apply Lemma \ref{lemma_invers} to
$$
T(\lambda) := I + V R_0((\lambda+i0)^2) = \bpm \ov Q\widehat T(\lambda) \ov Q & \ov Q\widehat T(\lambda)Q \\ Q\widehat T(\lambda)\ov Q & Q\widehat T(\lambda)Q \epm := \bpm T_{00}(\lambda) & T_{01}(\lambda) \\ T_{10} (\lambda) & T_{11} (\lambda)\epm.
$$
Note that $T_{00}(\lambda):=\ov Q(I+VR_0(\lambda^2)) \ov Q$ is invertible in $\B(\ov Q L^1)$ for $|\lambda|<<1$, because
$$
T_{00}(0) = \ov Q\widehat T(0) \ov Q = \ov Q(I + VR_0(0))\ov Q
$$
is invertible on $\ov Q L^1$ of inverse $K$, see (\ref{inverse}), and $T_{00}(\lambda)$ is continuous in the norm of $\B(L^1)$, see (\ref{holder}) above.

Assume $T_{00}(\lambda)$ were not invertible in $\B(\ov Q L^{3/2, 1})$; then by Fredholm's alternative there should exist a solution $f$ to the equation $f + VR_0(\lambda) f = 0$ in $\ov Q L^{3/2, 1}$. However, such a solution will also be in $\ov Q L^1$, which contradicts the invertibility of $T_{00}(\lambda)$ in $\B(\ov Q L^1)$.

Furthermore, start from $R_{0}((\lambda+i0)^2) \in \V_{L^{3/2, 1}, L^\infty} \cap \V_{L^1, L^{3, \infty}}$. We know that $V \in \V_{L^{3, \infty}, L^1} \cap \V_{L^{\infty}, L^{3/2, 1}}$. Thus $V R_0((\lambda+i0)^2) \in \V_{L^1} \cap \V_{L^{3/2, 1}}$ and $\ov Q$ preserves that. Then $T_{00}(\lambda) \in \V_{L^1} \cap \V_{L^{3/2, 1}}$ as well.

Next, since $T_{00}(0)$ is invertible, $T_{00}^{-1}(\lambda) \in \W_{loc}$. The proof is as follows: let $S_\epsilon(\lambda) = \chi(\lambda/\epsilon)\ov Q(VR_0((\lambda+i0)^2) - VR_0(0)) \ov Q$. A simple argument based on condition C1 shows that $\lim_{\epsilon \to 0} \|\widehat S_\epsilon\|_{\V_{L^1}\cap \V_{L^{3/2, 1}}}=0$. Then, for $\epsilon < \epsilon_0/2$,
$$\begin{aligned}
\chi(\lambda/\epsilon) T_{00}^{-1}(\lambda) &= \chi(\lambda/\epsilon) \big(T_{00}(0) + \chi(\lambda/\epsilon_0) \ov Q (V R_0((\lambda+i0)^2) - V R_0(0)) \ov Q\big)^{-1}\\
&=\chi(\lambda/\epsilon) T_{00}(0)^{-1}(I + S_{\epsilon_0}(\lambda) T_{00}(0)^{-1})^{-1}\\
&=\chi(\lambda/\epsilon) T_{00}(0)^{-1} \sum_{k=0}^\infty (-1)^k (S_{\epsilon_0}(\lambda) T_{00}(0)^{-1})^k.
\end{aligned}$$
The series above converges for sufficiently small $\epsilon_0$, showing that $\chi(\lambda/\epsilon) T_{00}^{-1}(\lambda) \in \V_{L^1} \cap \V_{L^{3/2, 1}}$.

Concerning the derivative, for $\epsilon < \epsilon_0/2$
$$
\chi(\lambda/\epsilon) \partial_\lambda T_{00}^{-1}(\lambda) = - \chi(\lambda/\epsilon) T_{00}^{-1}(\lambda) \chi(\lambda/\epsilon_0) \partial_{\lambda} T_{00}(\lambda) \chi(\lambda/\epsilon_0) T_{00}^{-1}(\lambda).
$$
In this expression $\chi(\lambda/\epsilon) T_{00}^{-1}(\lambda) \in \V_{L^1} \cap \V_{L^{3/2, 1}}$ and $\chi(\lambda/\epsilon_0) \partial_{\lambda} T_{00}(\lambda) \in \V_{L^{1}, L^{3/2, 1}}$ since $\ds M(\partial_{\lambda} T_{00}(\lambda)) = \frac{|V| \otimes 1}{4\pi}$. Thus
$\chi(\lambda/\epsilon) \partial_\lambda T_{00}^{-1}(\lambda) \in \V_{L^{1}, L^{3/2, 1}}$.

Let
$$\begin{aligned}
J (\lambda) &:= \frac{T(\lambda) - (I + V R_0(0) + i \lambda (4\pi)^{-1} V \otimes 1 )}{\lambda^2} \\
&= \frac{V R_0((\lambda+i0)^2) - V R_0(0) - i \lambda (4\pi)^{-1} V \otimes 1}{\lambda^2}.
\end{aligned}$$

Then
$$\begin{aligned}
T_{11}(\lambda) &= Q T(\lambda) Q = Q (I + V R_0(\lambda^2)) Q\\
&=Q (VR_0((\lambda+i0)^2) - V R_0(0)) Q\\
&=V \phi \otimes V\phi(R_0((\lambda+i0)^2) - R_0(0)) V\phi \otimes \phi \\
&= \Big(\lambda \frac{|\langle V, \phi \rangle|^2}{4i\pi} - \lambda^2 \langle \phi, J (\lambda) V \phi \rangle\Big) Q \\
&:=(\lambda a^{-1} - \lambda^2 \langle \phi, J(\lambda) V \phi) \rangle) Q\\
&:= \lambda c_0(\lambda) Q.
\end{aligned}$$
Note that $c_0(0) = a^{-1} \ne 0$. Here $\ds a:=\frac {4i\pi}{|\langle V, \phi\rangle|^2}$.

Here $c_0(\lambda) \in \widehat L^1$ if
$$
\int_{\R^3} \int_{\R^3} V(x)\phi(x) V(y) \phi(y) \Big\|\frac{e^{i\lambda|x-y|} - 1}{\lambda |x-y|}\Big\|_{\widehat L^1_{\lambda}} \dd x \dd y < \infty.
$$
For every $x$ and $y$,
$$
\Big\|\frac{e^{i\lambda|x-y|} - 1}{\lambda |x-y|}\Big\|_{\widehat L^1_{\lambda}}=\Big\|\frac {\chi_{[0, |x-y|]}(t)}{|x-y|}\Big\|_{L^1_t}=1,
$$
so it is enough to assume that $V \phi \in L^1$, i.e.\ that $V \in L^{3/2, 1}$, to prove that $c_0(\lambda) \in \widehat L^1$.


Regarding $J(\lambda)$, when $V \in \langle x \rangle^{-1} L^{3/2, 1}$ then
$$
\langle J(\lambda) \phi, V \phi \rangle = \Big\langle \frac{R_0((\lambda+i0)^2) - R_0(0) - i \lambda (4\pi)^{-1} 1 \otimes 1}{\lambda^2} V\phi, V \phi \Big\rangle \in \widehat L^1_\lambda.
$$

Furthermore, let
$$\begin{aligned}
\lambda\tilde\psi(\lambda) &:= \widehat T(\lambda)V \phi = (V R_0((\lambda+i0)^2)V-V R_0(0)V)\phi\\
&= \lambda\Big(i \frac{V \otimes 1}{4\pi} + \lambda J (\lambda)\Big) V\phi
\end{aligned}$$
and
$$\begin{aligned}
\lambda \tilde \psi^*(\lambda) &:= \widehat T(\lambda)^* \phi = (R_0^*((\lambda+i0)^2) V - R_0(0)V) \phi \\
&= \lambda \Big(-i \frac{1 \otimes V}{4\pi} + \lambda J^*(\lambda)\Big) \phi.
\end{aligned}$$

%
Note that $\ds M(\lambda J(\lambda)) = \frac {|V| \otimes 1}{2\pi}$. Thus $\lambda J(\lambda) \in \V_{L^1}$ for $V \in \langle x \rangle^{-1} L^{3/2, 1}$. Then $\tilde \psi(\lambda) \in \widehat L^1(L^1)$ and $\tilde \psi^*(\lambda) \in \widehat L^1(L^\infty)$.

Then
$$\begin{aligned}
T_{01}(\lambda)  &:=\ov Q \widehat T(\lambda) Q =\widehat T(\lambda) Q - Q \widehat T(\lambda) Q\\
&=-\lambda \tilde \psi(\lambda) \otimes \phi - \lambda c_0(\lambda) Q\\
&=-\lambda (\tilde \psi (\lambda) - c_0(\lambda)V\phi) \otimes \phi.
\end{aligned}$$

Likewise,
$$\begin{aligned}
T_{10}(\lambda) &= -\lambda V\phi \otimes (\tilde \psi^*(\lambda) -\ov{c_0(\lambda)}\phi).
\end{aligned}$$
Note that $T_{01}(\lambda)  = \lambda E_1(\lambda)$ with $E_1(\lambda) \in \V_{L^1} \cap \V_{L^{3/2, 1}, L^1}$ and $T_{10}(\lambda) = \lambda E_2(\lambda)$ with $E_2(\lambda) \in \V_{L^1} \cap \V_{L^1, L^{3/2, 1}}$.

Then $-T_{10} (\lambda) T_{00}^{-1}(\lambda) T_{01} (\lambda) = \lambda^2 c_1(\lambda) Q$, where
\be\lb{c1}\begin{aligned}
c_1(\lambda) &:= \big\langle \tilde \psi^*(\lambda) - \ov{c_0(\lambda)} \phi , T_{00}^{-1}(\lambda)( \tilde \psi (\lambda) - c_0 (\lambda)V\phi) \big\rangle \\
&= \Big\langle \Big(-i \frac{1 \otimes V}{4\pi} + \lambda J^*(\lambda)\Big) \phi - \ov{c_0(\lambda)}\phi,\\
&T_{00}^{-1}\Big(\Big(i \frac{V \otimes 1}{4\pi} + \lambda J (\lambda)\Big) V\phi - c_0(\lambda) V\phi\Big)\Big\rangle.
\end{aligned}\ee
For example, one of the terms in (\ref{c1}) has the form
\be\lb{term}
\langle \lambda J^*(\lambda) \phi, T_{00}^{-1}(\lambda) \lambda J(\lambda) V \phi \rangle.
\ee

Since $\lambda J(\lambda)$ and $T_{00}^{-1}(\lambda)$ are in $\V_{L^1}$ and since $\phi \in L^\infty$, $V\phi \in L^1$, it immediately follows that (\ref{term}) is in $\widehat L^1$.


We then recognize from formula (\ref{c1}) that, for a cutoff function $\chi$, $\chi(\lambda/\epsilon) c_1(\lambda) \in~\widehat L^1$.

Let
$$
C(\lambda) := T_{11}(\lambda) - T_{10}(\lambda)T_{00}^{-1}(\lambda)T_{01} (\lambda).
$$
Then
$$\begin{aligned}
C(\lambda) &= (c_0(\lambda) \lambda + \lambda ^2 c_1(\lambda)) Q \\
&= (\lambda a^{-1} - \lambda^2 \langle V_1\phi, J(\lambda) V_2\phi \rangle + \lambda^2 c_1(\lambda)) Q:= \lambda a^{-1}Q + \lambda^2 c_2(\lambda)Q.
\end{aligned}$$
Thus $C(\lambda)/\lambda$ is invertible for $|\lambda|<<1$ and 
when $V \in \langle x \rangle^{-1} L^{3/2, 1}$ one has that
$$\begin{aligned}
C^{-1}(\lambda) &= \frac 1 {\lambda a^{-1} + \lambda^2 c_2(\lambda)} Q \\
&= \Big(\frac 1 {\lambda a^{-1}} + \frac 1 {\lambda a^{-1} + \lambda^2 c_2(\lambda)} - \frac 1 {\ds\lambda a^{-1}}\Big) Q\\
&= \Big(\frac {a}{\lambda} - \frac{c_2(\lambda)}{\ds\big(a^{-1} + \lambda c_2(\lambda)\big)a^{-1}}\Big)Q\\
&:=a\lambda^{-1}Q + E(\lambda).
\end{aligned}$$
For a fixed standard cutoff function $\chi$ and sufficiently small $\epsilon$, since $Q \in \B(L^{1}) \cap \B(L^{3/2, 1}) \cap \B(L^1, L^{3/2, 1}) \cap \B(L^{3/2, 1}, L^1)$, it follows that $E(\lambda) \in \V_{L^1} \cap \V_{L^{3/2, 1}} \cap \V_{L^1, L^{3/2, 1}} \cap \V_{L^{3/2, 1}, L^1}$.

The inverse of $T$ is then given by formula (\ref{invers}):
$$
T^{-1}= \bpm T_{00}^{-1} + T_{00}^{-1} T_{01} C^{-1} T_{10} T_{00}^{-1}&-T_{00}^{-1} T_{01} C^{-1} \\ 
-C^{-1} T_{10} T_{00}^{-1} & C^{-1}
\epm.
$$
Three of the matrix elements belong to $\V_{L^1}\cap\V_{L^{3/2, 1}}$. Indeed, recall that $T_{00}^{-1}\in \W_{loc}$ and $T_{10}(\lambda) = \lambda E_1(\lambda)$ with $E_1(\lambda) \in \V_{L^1} \cap \V_{L^{3/2, 1}, L^1}$ and $T_{01}(\lambda) = \lambda E_2(\lambda)$ with $E_2(\lambda) \in \V_{L^1} \cap \V_{L^1, L^{3/2, 1}}$, while $C^{-1} = \lambda^{-1} E_3(\lambda)$, with $E_3 \in \V_{L^1} \cap \V_{L^{3/2, 1}} \cap \V_{L^1, L^{3/2, 1}} \cap \V_{L^{3/2, 1}, L^1}$ locally.

The fourth matrix element is $C^{-1}$ in the lower-right corner, which is the sum of the term $E(\lambda) \in \V_{L^1} \cap \V_{L^{3/2, 1}}$ and the singular term
$$
a \lambda^{-1} Q = -a\lambda^{-1} V\phi \otimes \phi.
$$
Thus $a \lambda^{-1}Q$ is the only singular term in the expansion of $T(\lambda)$ at zero.

The other conclusion of the theorem referring to $T^*$ is obtained by taking the adjoint.
\end{proof}

The subsequent lemma collects results obtained in \cite{becgol}.
\begin{lemma}\lb{lemma_free} The free sine and cosine evolutions satisfy the following reverse Strichartz estimates:
$$\begin{aligned}
\Big\|\frac{\sin(t\sqrt{-\Delta})}{\sqrt{-\Delta}} f\Big\|_{L^{6, 2}_x L^\infty_t \cap L^\infty_x L^2_t} &\les \|f\|_2;\\
\Big\|\frac{\sin(t\sqrt{-\Delta})}{\sqrt{-\Delta}} f\Big\|_{L^\infty_x L^1_t} &\les \|f\|_{L^{3/2, 1}};\\
\Big\|\int_0^t \frac{\sin((t-s)\sqrt{-\Delta})}{\sqrt{-\Delta}} F(s) \dd s\|_{L^{6, 2}_x L^\infty_t} &\les \|F\|_{L^{6/5, 2}_x L^\infty_t};\\
\Big\|\int_0^t \frac{\sin((t-s)\sqrt{-\Delta})}{\sqrt{-\Delta}} F(s) \dd s\|_{L^\infty_x L^2_t} &\les \|F\|_{L^{3/2, 1}_x L^2_t};\\
\Big\|\int_0^t \frac{\sin((t-s)\sqrt{-\Delta})}{\sqrt{-\Delta}} F(s) \dd s\|_{L^\infty_x L^1_t} &\les \|F\|_{L^{3/2, 1}_x L^1_t};\\
\|\cos(t\sqrt{-\Delta}) g\|_{L^{6, 2}_x L^\infty_t \cap L^\infty_x L^2_t} &\les \|g\|_{\dot H^1};\\
\|\cos(t\sqrt{-\Delta}) g\|_{L^\infty_x L^1_t} &\les \|g\|_{|\dl|^{-1} L^{3/2, 1}};\\
\Big\|\int_0^t \cos((t-s)\sqrt{-\Delta}) G(s) \dd s\|_{L^{6, 2}_x L^\infty_t \cap L^\infty_x L^2_t} &\les \|G\|_{L^1_t \dot H^1_x};\\
\Big\|\int_0^t \cos((t-s)\sqrt{-\Delta}) G(s) \dd s\|_{L^\infty_x L^1_t} &\les \|G\|_{L^1_t |\dl|^{-1} L^{3/2, 1}_x}.
\end{aligned}$$
\end{lemma}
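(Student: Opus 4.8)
The plan is to reduce the whole list to the free \emph{maximal} Strichartz estimates of \cite{becgol}, together with the explicit three-dimensional wave kernel and a handful of elementary operations: Cauchy--Schwarz on spheres, Minkowski's integral inequality, Plancherel in the time variable, and H\"older in Lorentz spaces. For $t>0$ the operator $\frac{\sin(t\sqrt{-\Delta})}{\sqrt{-\Delta}}$ has kernel $(4\pi|x-y|)^{-1}\delta_{|x-y|}(t)$, so that $\frac{\sin(t\sqrt{-\Delta})}{\sqrt{-\Delta}}f(x)=\frac1{4\pi t}\int_{|x-y|=t}f(y)\dd\sigma(y)$ and $\cos(t\sqrt{-\Delta})f=\partial_t\big(\frac{\sin(t\sqrt{-\Delta})}{\sqrt{-\Delta}}f\big)$. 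Fixing $x$, Cauchy--Schwarz on the sphere $\{|x-y|=t\}$ and passage to polar coordinates give $\int_0^\infty\big|\tfrac{\sin(t\sqrt{-\Delta})}{\sqrt{-\Delta}}f(x)\big|^2\dd t\les\|f\|_2^2$, which is the homogeneous $L^\infty_xL^2_t$ bound, while $\int_0^\infty\big|\tfrac{\sin(t\sqrt{-\Delta})}{\sqrt{-\Delta}}f(x)\big|\dd t\les\int_{\R^3}\tfrac{|f(y)|}{|x-y|}\dd y\les\|f\|_{L^{3/2,1}}$, the last step being H\"older against $|x-\cdot|^{-1}\in L^{3,\infty}=(L^{3/2,1})^{*}$ (the duality used repeatedly above, e.g.\ before Lemma~\ref{lemma_22}). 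The inhomogeneous $L^\infty_x$ bounds are obtained the same way, starting from
$$
\int_0^t\frac{\sin((t-s)\sqrt{-\Delta})}{\sqrt{-\Delta}}F(s)(x)\dd s=\frac1{4\pi}\int_{|x-y|<t}\frac{F(y,\,t-|x-y|)}{|x-y|}\dd y
$$
and using Minkowski's integral inequality in the $L^2_t$, resp.\ $L^1_t$, norm to extract $|x-y|^{-1}$, leaving $\|F(y,\cdot)\|_{L^2_t}$, resp.\ $\|F(y,\cdot)\|_{L^1_t}$, paired against $|x-\cdot|^{-1}\in L^{3,\infty}$; H\"older in Lorentz spaces gives the asserted $L^{3/2,1}_xL^2_t$ and $L^{3/2,1}_xL^1_t$ bounds. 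For the homogeneous cosine $L^\infty_xL^2_t$ bound I would instead pass to frequencies: writing $\cos(t\sqrt{-\Delta})g(x)=\int_0^\infty\cos(tr)\,r^2\big(\int_{S^2}\widehat g(r\omega)e^{ixr\omega}\dd\omega\big)\dd r$ and invoking Plancherel for the cosine transform inserts the weight $r^4$, and Cauchy--Schwarz on $S^2$ followed by polar coordinates bounds this by $\int_{\R^3}|\xi|^2|\widehat g(\xi)|^2\dd\xi=\|g\|_{\dot H^1}^2$, uniformly in $x$.

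The remaining cosine estimates and the inhomogeneous estimates with an $L^1_s$ time input follow from the homogeneous ones by Duhamel plus Minkowski's integral inequality in the $s$-variable. For instance $\big\|\int_0^t\cos((t-s)\sqrt{-\Delta})G(s)\dd s\big\|_{L^\infty_xL^2_t}\le\int_0^\infty\|\cos(\tau\sqrt{-\Delta})G(s)\|_{L^\infty_xL^2_\tau}\dd s\les\int_0^\infty\|G(s)\|_{\dot H^1}\dd s=\|G\|_{L^1_t\dot H^1_x}$ by the homogeneous cosine $L^\infty_xL^2_t$ bound and translation invariance in time, and the same argument handles the $L^\infty_xL^1_t$ (with $L^1_t|\dl|^{-1}L^{3/2,1}_x$ input) and $L^{6,2}_xL^\infty_t$ components. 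The only estimate whose elementary proof is noticeably longer is the homogeneous cosine $L^\infty_xL^1_t$ bound: writing $g=|\dl|^{-1}h$ it amounts to controlling the total variation in $t$ of $t\mapsto\frac{\sin(t\sqrt{-\Delta})}{-\Delta}h(x)$ via the kernel $\frac1{4\pi^2|x-y|}\log\big|\tfrac{|x-y|+t}{|x-y|-t}\big|$ of $\frac{\sin(t\sqrt{-\Delta})}{-\Delta}$ and a weighted Hilbert transform bound; I would simply quote it from \cite{becgol}.

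What is genuinely hard --- and this is the main obstacle --- are the maximal estimates $\big\|\frac{\sin(t\sqrt{-\Delta})}{\sqrt{-\Delta}}f\big\|_{L^{6,2}_xL^\infty_t}\les\|f\|_2$, $\|\cos(t\sqrt{-\Delta})g\|_{L^{6,2}_xL^\infty_t}\les\|g\|_{\dot H^1}$, and the retarded bound $\big\|\int_0^t\frac{\sin((t-s)\sqrt{-\Delta})}{\sqrt{-\Delta}}F(s)\dd s\big\|_{L^{6,2}_xL^\infty_t}\les\|F\|_{L^{6/5,2}_xL^\infty_t}$. Here the supremum in $t$ is taken \emph{before} the integration in $x$, so the left-hand side is strictly stronger than any mixed-norm quantity of type $L^q_tL^r_x$ and cannot be reached from the explicit kernel by the Cauchy--Schwarz/Minkowski manipulations above. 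These three are precisely the free maximal Strichartz estimates of \cite{becgol}, which I would cite directly; their proof there rests on a Littlewood--Paley decomposition combined with an orthogonality/$TT^{*}$ argument, and the choice of the Lorentz exponent $L^{6,2}_x$ rather than $L^6_x$ is exactly what makes these estimates and their duals compose correctly with the convolution by $|x|^{-1}\in L^{3,\infty}$ that underlies the $|\dl|^{-1}$ weights. Once they are in hand, the retarded cosine maximal bound $\big\|\int_0^t\cos((t-s)\sqrt{-\Delta})G(s)\dd s\big\|_{L^{6,2}_xL^\infty_t}\les\|G\|_{L^1_t\dot H^1_x}$ follows from the homogeneous cosine maximal estimate by the Minkowski argument of the previous paragraph, completing the list.
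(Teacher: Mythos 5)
Your overall strategy --- explicit wave kernel, Cauchy--Schwarz on spheres, Minkowski in time, H\"older against $|x|^{-1}\in L^{3,\infty}$, and $TT^*$ for the maximal pieces --- matches the paper's proof, which is itself reproduced from \cite{becgol}. Three points, though, diverge from what the paper actually does, and one of them is a genuine mischaracterization worth flagging.

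First, you classify the retarded bound
$\big\|\int_0^t\tfrac{\sin((t-s)\sqrt{-\Delta})}{\sqrt{-\Delta}}F(s)\dd s\big\|_{L^{6,2}_xL^\infty_t}\les\|F\|_{L^{6/5,2}_xL^\infty_t}$
among the ``genuinely hard'' maximal estimates that cannot be reached from the kernel by Cauchy--Schwarz/Minkowski. That is not so. The identity
$\int_0^t\frac{\sin((t-s)\sqrt{-\Delta})}{\sqrt{-\Delta}}F(s)\dd s(x)=\frac1{4\pi}\int_{|x-y|<t}\frac{F(y,\,t-|x-y|)}{|x-y|}\dd y$,
which you already wrote down, gives by Minkowski the \emph{pointwise-in-$x$} bound
$\big\|\int_0^t\cdots\dd s\big\|_{L^p_t}(x)\le\int\frac{\|F(y,\cdot)\|_{L^p_t}}{4\pi|x-y|}\dd y$
for every $p\in[1,\infty]$, including $p=\infty$, because for fixed $y$ the map $t\mapsto t-|x-y|$ ranges over all of $[0,\infty)$. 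Applying $\|\cdot\|_{L^{6,2}_x}$ and the Young--O'Neil convolution inequality $L^{3,\infty}*L^{6/5,2}\subset L^{6,2}$ finishes. The retarded $s$-integration is exactly what converts the singular surface measure into the absolutely continuous kernel $|x-y|^{-1}$, so this estimate is on the same elementary footing as the retarded $L^\infty_xL^2_t$ and $L^\infty_xL^1_t$ ones. Only the two \emph{homogeneous} $L^{6,2}_xL^\infty_t$ bounds (and, by Minkowski in $s$, the retarded cosine one with $L^1_t\dot H^1_x$ input) genuinely require more.

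Second, for those two homogeneous maximal estimates the paper does not use a Littlewood--Paley decomposition. It runs a clean $TT^*$ argument in mixed norms: the kernel of $\frac{\cos(\tau\sqrt{-\Delta})}{-\Delta}=\int_\tau^\infty\frac{\sin(s\sqrt{-\Delta})}{\sqrt{-\Delta}}\dd s$ is computed explicitly to be $\frac{1}{4\pi|x-y|}\chi_{\{|x-y|>|\tau|\}}$, whose $L^\infty_\tau$ envelope is exactly $\frac{1}{4\pi|x-y|}$; hence $\frac{\cos(\cdot\sqrt{-\Delta})}{-\Delta}\in\B(L^{6/5,2}_xL^1_t,\,L^{6,2}_xL^\infty_t)$, and since $TT^*$ for $Tf=\frac{\sin(t\sqrt{-\Delta})}{\sqrt{-\Delta}}f$ is a difference of two such cosine operators, $T\in\B(L^2,L^{6,2}_xL^\infty_t)$ follows. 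The same for $\cos(t\sqrt{-\Delta})/\sqrt{-\Delta}$. This is the one real idea in the lemma and your proposal leaves it unspecified.

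Third, two smaller stylistic deviations. Your frequency-side Plancherel argument for $\|\cos(t\sqrt{-\Delta})g\|_{L^\infty_xL^2_t}\les\|g\|_{\dot H^1}$ is correct, but the paper stays in physical space, writing $\cos(t\sqrt{-\Delta})g(x)$ via the spherical mean $\frac{1}{4\pi}\int_{S^2}\big(g(x+t\omega)+t\,\partial_rg(x+t\omega)\big)\dd\omega$ and applying Cauchy--Schwarz on the sphere; both routes are fine. For the homogeneous cosine $L^\infty_xL^1_t$ bound, however, the paper's proof is shorter than what you sketch: it uses the same spherical-mean representation, integrates by parts in $r$ to convert $\int_0^\infty\int_{S^2}|g|\dd\omega\dd r$ into $\int_0^\infty\int_{S^2}|\partial_rg|\,r\dd\omega\dd r\les\|\dl g\|_{L^{3/2,1}}$, and needs neither the logarithmic kernel of $\frac{\sin(t\sqrt{-\Delta})}{-\Delta}$ nor any Hilbert-transform machinery.

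With these corrections your plan is sound, and the remaining retarded estimates do indeed follow from the homogeneous ones by Minkowski in $s$, exactly as you (and the paper) say.
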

We provide the proof, also borrowed from \cite{becgol}, for the reader's convenience.
\begin{proof}
The integral kernel of $\ds\frac{\sin(t\sqrt{-\Delta})}{\sqrt{-\Delta}}$ is given by
$$
\frac{\sin(t\sqrt{-\Delta})}{\sqrt{-\Delta}} f(x) = \frac 1{4\pi t} \int_{|x-y|=t} f(y) \dd y.
$$
Then
\be\lb{sine_evol}\begin{aligned}
\Big\|\frac {\sin(t \sqrt {-\Delta})}{\sqrt {-\Delta}} f\Big\|_{L^{\infty}_x 
L^2_t}^2 &= \esssup_x \int_0^{\infty} \Big(\int_{S^2} f(x+r\omega) r \dd \omega 
\Big)^2 \dd r\\
&\leq \esssup_x \int_0^{\infty} \Big(\int_{S^2} f(x+r\omega)^2 r^2 \dd \omega 
\Big) \Big(\int_{S^2} \dd \omega\Big) \dd r\\
&\les \|f\|_2^2.
\end{aligned}\ee
and
\be\begin{aligned}
\Big\|\frac {\sin(t \sqrt {-\Delta})}{\sqrt {-\Delta}} f\Big\|_{L^{\infty}_x 
L^1_t} &= \esssup_x \int_0^{\infty} \Big|\int_{S^2} f(x+r\omega) r \dd \omega 
\Big| \dd r\\
&\leq \esssup_x \int_\R \frac {|f(x-y)|}{|y|} \dd y\\
&\les \|f\|_{L^{3/2, 1}}.
\end{aligned}\ee
Furthermore,
$$\begin{aligned}
\Big\|\int_0^t \frac {\sin(t-s) \sqrt {-\Delta})}{\sqrt {-\Delta}} F(s) \dd s\Big\|_{L^{\infty}_x 
L^p_t} &= \esssup_x \Big\|\int_0^t \int_{|x-y|=t-s} \frac 1 {|x-y|} F(y, s) \dd y \dd s\Big\|_{L^p_t}\\
&= \esssup_x \Big\|\int_{|x-y|\leq t} \frac 1 {|x-y|} F(y, t-|x-y|) \dd y\Big\|_{L^p_t}\\
&\leq \esssup_x \int \frac 1 {|x-y|} \|F(y, t)\|_{L^p_t} \dd y\\
&\les \|F\|_{L^{3/2, 1}_x L^p_t}.
\end{aligned}$$
More generally,
$$\begin{aligned}
\Big\|\int_0^t \frac {\sin(t-s) \sqrt {-\Delta})}{\sqrt {-\Delta}} F(s) \dd s\Big\|_{ 
L^p_t}(x) \leq \int \frac 1 {|x-y|} \|F(y, t)\|_{L^p_t} \dd y.
\end{aligned}$$
Since convolution with $\ds \frac 1 {|x|}$ takes $L^{6/5, 2}$ to $L^{6, 2}$, we obtain that
$$
\frac {\sin(t\sqrt {-\Delta})}{\sqrt {-\Delta}} \in \B(L^{6/5, 2}_x L^p_t, L^{6, 2}_x L^p_t).
$$

Next, note that the integral kernel of
$$
\frac{\cos((t_1-t_2)\sqrt {{-\Delta}})}{\Delta} = ({-\Delta})^{-1} - 
\int_0^{t_1-t_2} \frac {\sin(s\sqrt{{-\Delta}})}{\sqrt{{-\Delta}}} \dd s = 
\int_{t_1 - t_2}^{\infty} \frac {\sin(s\sqrt{{-\Delta}})}{\sqrt{{-\Delta}}} \dd 
s
$$
is $T(t_1-t_2, x, y)$, where
$$
T(s, x, y) = \left\{\begin{aligned}&\frac 1 {4\pi} \frac 1 {|x-y|},&&|x-y| > 
|s|\\
&0,&& |x-y| < |s|.
\end{aligned}\right.
$$
The $L^\infty_s$ norm of this kernel is exactly $\frac 1 {4\pi} \frac 1 {|x-
y|}$, so it is a bounded operator and in particular
\be\lb{cos_bounded}\begin{aligned}
\frac{\cos((t_1-t_2)\sqrt {{-\Delta}})}{\Delta} &\in \B(L^{6/5, 2}_x L^1_t, L^{6, 2}_x L^\infty_t).
\end{aligned}\ee
Consider the operator $\displaystyle T f = \frac {\sin(t\sqrt {-\Delta})}{\sqrt H} f$. Then $\displaystyle T^* F = \int_{\R} 
\frac {\sin(t\sqrt {-\Delta})}{\sqrt {-\Delta}} F(t) \dd t$. Consequently
$$\begin{aligned}
(T T^* F)(t) &= \int_{\R} \frac {\sin(t\sqrt {-\Delta})}{\sqrt {-\Delta}} \frac {\sin(s\sqrt 
{-\Delta}) P_c}{\sqrt {-\Delta}} F(s)\dd s \\
&=  \frac 1 2 \int_{\R} \Big(\frac {\cos((t-s)\sqrt {-\Delta}) P_c}{-\Delta} - \frac 
{\cos((t+s)\sqrt {-\Delta}) P_c}{-\Delta}\Big) F(s)\dd s.
\end{aligned}$$
Thus $T T^*$ is a bounded operator from $L^{6/5, 2}_x L^1_t$ to its dual, so
\be\lb{tt1}
\frac {\sin(t\sqrt H) P_c}{\sqrt H} \in \B(L^2, L^{6, 2}_x L^{\infty}_t).
\ee
Consider $\displaystyle T f = \frac {\cos(t\sqrt {-\Delta})}{\sqrt {-\Delta}} f$. Again by the $T T^*$ method, since
$$
\int_R \frac {\cos(t\sqrt {-\Delta})}{\sqrt {-\Delta}} \frac {\cos(s\sqrt {-\Delta})}{\sqrt {-\Delta}} \dd s = \frac 
1 2 \int_{\R} \frac {\cos((t-s)\sqrt {-\Delta})}{-\Delta} + \frac {\cos((t+s)\sqrt {-\Delta}) 
P_c}{-\Delta} \dd s,
$$
we also obtain
\be\lb{tt2}
\frac {\cos(t\sqrt {-\Delta})}{\sqrt {-\Delta}} \in \B(L^2, L^{6, 2}_x L^{\infty}_t).
\ee

Likewise
$$\begin{aligned}
\big\|\cos(t \sqrt {-\Delta}) g\big\|_{L^{\infty}_x L^2_t}^2 &= \esssup_x \int_0^{\infty} \Big(\int_{S^2} f(x+r\omega) + r \partial_r f(x+r\omega) \dd \omega
\Big)^2 \dd r\\
&\les \esssup_x \int_0^{\infty} \Big(\int_{S^2} g(x+r\omega)\dd \omega\Big)^2 + \Big(\int_{S^2} \partial_r f(x+r\omega) r \dd \omega
\Big)^2 \dd r \\
&\leq \esssup_x \int_0^{\infty} \Big(\int_{S^2} g(x+r\omega)^2 \dd \omega\Big) \Big(\int_{S^2} \dd \omega\Big) + \\
&+ \Big(\int_{S^2} (\partial_r g(x+r\omega))^2 r^2 \dd \omega 
\Big) \Big(\int_{S^2} \dd \omega\Big) \dd r\\
&\les \|\dl g\|_2^2.
\end{aligned}
$$
Since for bounded compactly supported functions $g$
$$
\int_\R \int_{S^2} |g(x+r\omega)| \dd \omega \dd r = - \int_\R \int_{S^2} \partial_r |g(x+r\omega)| r \dd \omega \dd r \les \|\dl g\|_{L^{3/2, 1}},
$$
by approximation we obtain that for all $g \in |\dl|^{-1}| L^{3/2, 1} \subset L^{3, 1}$
$$\begin{aligned}
\big\|\cos(t \sqrt {-\Delta}) g\big\|_{L^{\infty}_x L^1_t} &= \esssup_x \int_0^{\infty} \Big|\int_{S^2} g(x+r\omega) + r \partial_r g(x+r\omega) \dd \omega \Big| \dd r\\
&\leq \esssup_x \int_0^{\infty} \int_{S^2} |g(x+r\omega)| \dd \omega \dd r + \int_0^\infty \int_{S^2} |\partial_r g(x+r\omega)| r \dd \omega \dd r\\
&\les \|\dl g\|_{L^{3/2, 1}}.
\end{aligned}
$$
The last remaining result follows by Minkowski's inequality.
\end{proof}

FInally, we can prove Proposition \ref{prop28}.
\begin{proof}[Proof of Proposition \ref{prop28}]
Recall that by Lemma \ref{lemma24} for $\lambda<<1$
$$
\widehat T^*(\lambda)^{-1} = (I + R_0(\lambda^2)V)^{-1} = \widehat L^*(\lambda) - \lambda^{-1} \frac{4\pi i}{|\langle V, \phi\rangle|^2} \phi \otimes V\phi,
$$
where $(\chi(\lambda/\epsilon) L^*)^\wedge \in \V_{L^{\infty}} \cap \V_{L^{3, \infty}}$ for sufficiently small $\epsilon$.

We consider a partition of unity subordinated to the neighborhoods of Lemmas \ref{lemma23} and \ref{lemma24}:
$$
1 = \chi_0(\lambda/\epsilon) + \sum_{k=1}^N \chi_k((\lambda-\lambda_k)/\epsilon_k) + (1-\chi_{\infty}(\lambda/R)).
$$
By Lemma \ref{lemma23}, for any $\lambda_k \ne 0$, $(\chi_k((\lambda-\lambda_k)/\epsilon_k) T^*(\lambda))^\wedge \in \V_{L^{3, \infty}} \cap \V_{L^\infty}$ and same at infinity. The sum of these terms has the same property. By Lemma \ref{lemma24} $\chi_0(\lambda/\epsilon) \widehat T(\lambda)$ also decomposes into $(\chi_0(\lambda/\epsilon) L^*)^\wedge \in \V_{L^{3, \infty}} \cap \V_{L^\infty}$ and the singular term
$$
-\chi_0(\lambda/\epsilon)\lambda^{-1} \frac{4\pi i}{|\langle V, \phi\rangle|^2} \phi \otimes V\phi.
$$
Write
$$\begin{aligned}
-\chi_0(\lambda/\epsilon)\lambda^{-1} \frac{4\pi i}{|\langle V, \phi\rangle|^2} \phi \otimes V\phi &= -\lambda^{-1} \frac{4\pi i}{|\langle V, \phi\rangle|^2} \phi \otimes V\phi +\\
&+(1-\chi_0(\lambda/\epsilon))\lambda^{-1} \frac{4\pi i}{|\langle V, \phi\rangle|^2} \phi \otimes V\phi.
\end{aligned}$$
The Fourier transform of the second term is given by
\be\lb{scalar}
-\frac{2\pi}{|\langle V, \phi\rangle|^2} (\sgn(t)-(\chi_0(\cdot/\epsilon))^\wedge(t)\ast\sgn(t)) \phi \otimes V\phi.
\ee
When $\chi_0(\lambda)=\chi_0(-\lambda)$, the scalar coefficient in (\ref{scalar}) is bounded in absolute value by $\ds \frac{4\pi}{|\langle V, \phi\rangle|^2} \int_{|t|}^\infty |(\chi_0(\cdot/\epsilon))^\wedge(s)| \dd s$, so it is in $L^1_t$ for sufficiently smooth $\chi_0$. Thus this term belongs to $\V_{L^{3, \infty}} \cap \V_{L^\infty}$.

Let $\chi_{00}(\lambda) = \chi_0(\lambda/\epsilon)$ and $Z_1$ be given by the sum of all the $\V_{L^{3, \infty}} \cap \V_{L^\infty}$ terms of the decomposition, so
$$\begin{aligned}
Z(\lambda) &= (1-\chi_{00}(\lambda))\lambda^{-1} \frac{4\pi i}{|\langle V, \phi\rangle|^2} \phi \otimes V\phi + \chi_{00}(\lambda) L^*(\lambda) - (1-\chi_{00}(\lambda)) T^*(\lambda) \\
&= (I + R_0((\lambda+i0)^2) V)^{-1} + \lambda^{-1} \frac {4\pi i}{|\langle V, \phi \rangle|^2} \phi \otimes V\phi \\
&= I - R_V((\lambda+i0)^2) V + \lambda^{-1} \frac {4\pi i}{|\langle V, \phi \rangle|^2} \phi \otimes V\phi.
\end{aligned}$$
Thus, since $\langle g, \partial_a \phi \rangle = 0$,
\be\lb{Z}
P_c Z(\lambda) = P_c - P_c R_V((\lambda+i0)^2) V + \lambda^{-1} \frac {4\pi i}{|\langle V, \phi \rangle|^2} \phi \otimes V\phi.
\ee
Note that $I - R_V(\lambda) V$ is meromorphic on $\C \setminus [0, \infty)$ with only one pole at $-k^2$. Let $Q_{-k^2}$ be the spectral projection corresponding to $-k^2$, given by
$$
Q_{-k^2} = \lim_{\epsilon \to 0} \frac 1 {2\pi i} \int_{|\theta| = \epsilon} I - R_V(-k^2+\theta)V \dd \theta.
$$
Clearly $Q_{-k^2} = -g \otimes gV$ and $Q_{-k^2} R_0(-k^2) = g \otimes g = P_p = I-P_c$.

Then $P_c - R_V(\lambda^2) V - (\lambda^2+k^2)^{-1} Q_{-k^2}$ has a weakly analytic continuation to the upper half-plane, which is weakly continuous on $\R$ except at zero.

Note that $R_V(\lambda^2) g \otimes g = (\lambda^2+k^2)^{-1} g \otimes g$, so $P_c - R_V(\lambda^2) V - (\lambda^2+k^2)^{-1} Q_{-k^2} = P_c - P_c R_V(\lambda^2) V)$. 

Furthermore, for $\Im \lambda > 0$ and $f \in L^2$
$$\begin{aligned}
&\int_0^\infty e^{it\lambda}  \frac{\sin(t\sqrt H) P_c}{\sqrt H} f \dd t = \\
&= \int_0^\infty e^{it\lambda} \frac 1 {2\pi i} \int_0^\infty \sin(t\eta) (R_V(\eta^2+i0) - R_V(\eta^2-i0)) f 2\dd \eta \dd t.
\end{aligned}$$
Since the integral is absolutely convergent, we may interchange the order of integration. Because
$$
\int_0^\infty e^{it\lambda} \sin(t\eta) \dd t = \frac 1 {2i} \int_0^\infty e^{it(\lambda+\eta)} - e^{it(\lambda-\eta)} \dd t = \frac{\eta}{\eta^2-\lambda^2},
$$
we obtain that
$$
\int_0^\infty e^{it\lambda} \frac{\sin{t\sqrt H} P_c}{\sqrt H} \dd t = P_c R_V(\lambda^2).
$$
Likewise
$$
\int_0^\infty e^{it\lambda} \frac{\sin{t\sqrt {-\Delta}}}{\sqrt {-\Delta}} \dd t = R_0(\lambda^2).
$$

Since $P_c R_V(\lambda^2) = P_c(I-R_V(\lambda^2)V)R_0(\lambda^2)$ for $\Im \lambda \geq0$, the inverse Fourier transforms of $\ds \chi_{t \geq 0}(t)\frac{\sin(t\sqrt H)P_c}{\sqrt H}$ and
$$
\int_{-\infty}^t (P_c(I-R_V(\lambda^2)V))^\wedge(t-s) \chi_{s \geq 0}(s) \frac{\sin(s\sqrt {-\Delta})}{\sqrt{-\Delta}} \dd s
$$
coincide for $\Im \lambda >0$.

By (\ref{Z}) $P_c Z(\lambda)$ is also weakly analytic for $\Im \lambda>0$ and weakly continuous on $\R$ except possibly at zero. In the upper half-plane, $\ds \lambda^{-1} \frac {4\pi i}{|\langle V, \phi \rangle|^2} \phi \otimes V\phi$ is the inverse Fourier transform of $\ds \frac {-4\pi}{|\langle V, \phi \rangle|^2} \chi_{[0, \infty)}(t) \phi \otimes V\phi$. Consequently, the inverse Fourier transforms of $\ds \chi_{[0, \infty)}(t)\frac{\sin(t\sqrt H)P_c}{\sqrt H}$ and
$$
\tilde S(t) = \int_{-\infty}^\infty (P_c \widehat Z(t-s) - \frac {4\pi}{|\langle V, \phi \rangle|^2} \chi_{[0, \infty)}(t-s) \phi \otimes V\phi) \chi_{s \geq 0}(s) \frac{\sin(s\sqrt {-\Delta})}{\sqrt{-\Delta}} \dd s
$$
coincide for $\Im \lambda > 0$.

Let $Z_+ = (\chi_{[0, \infty)}(t) P_c\widehat Z(t))^\vee$ and $Z_- = (\chi_{(-\infty, 0)}(t) P_c\widehat Z(t))^\vee$. Clearly $Z_+$ is weakly analytic for $\Im \lambda>0$ and weakly continuous on $\R$ and $Z_-$ is analytic for $\Im \lambda<0$ and weakly continuous on $\R$. Since $Z_- = P_c Z - Z_+$, it follows that $Z_-$ is weakly analytic for both $\Im \lambda>0$ and $\Im \lambda<0$ and weakly continuous on $\R$ possibly except at zero from above. However, this implies that $Z_-$ is weakly analytic on the whole complex plane. Since $Z_-$ is uniformly bounded in the operator norm, it follows that $Z_-$ is constant, so it must be $0$. This implies that the support of $P_c\widehat Z(t)$ is on $[0, \infty)$, so the same is true for $\tilde S(t)$.

Then note that
$$\Big(e^{-yt} \chi_{[0, \infty)}(t)\frac{\sin(t\sqrt H)P_c}{\sqrt H}\Big)^\wedge(\lambda) = \Big(\chi_{[0, \infty)}(t)\frac{\sin(t\sqrt H)P_c}{\sqrt H}\Big)^\wedge(\lambda+iy)
$$
and $(e^{-yt} \tilde S(t))^\wedge = (\tilde S(t))^\wedge(\lambda+iy)$, so the inverse Fourier transforms coincide for $y>0$. This implies that the expressions themselves coincide, so
$$
\chi_{[0, \infty)}(t)\frac{\sin(t\sqrt H)P_c}{\sqrt H} = \int_0^t (P_c \widehat Z(t-s) - \frac {4\pi}{|\langle V, \phi \rangle|^2} \phi \otimes V\phi) \frac{\sin(s\sqrt {-\Delta})}{\sqrt{-\Delta}} \dd s.
$$
Taking the derivative we obtain as well that
$$
\chi_{[0, \infty)}(t)\cos(t\sqrt H)P_c = \int_0^t \Big(P_c \widehat Z(t-s) - \frac {4\pi}{|\langle V, \phi \rangle|^2} \phi \otimes V\phi\Big) \cos(s\sqrt {-\Delta}) \dd s.
$$

Since $\widehat Z \in \V_{L^{3, \infty}} \cap \V_{L^\infty}$, it follows that
$$
\widehat Z \in \B(L^{6, 2}_x L^{\infty}_t) \cap \B(L^{\infty}_x L^2_t) \cap \B(L^{\infty}_x L^1_t).
$$
By Lemma \ref{lemma_free} $\ds \frac{\sin(t\sqrt {-\Delta})}{\sqrt{-\Delta}} \in \B(L^2_x, L^{6, 2}_x L^{\infty}_t \cap L^{\infty}_x L^2_t) \cap \B(L^{3/2, 1}_x, L^\infty_x L^1_t) \cap \B(L^{6/5, 2}_x L^{\infty}_t, L^{6, 2}_x L^{\infty}_t) \cap \B(L^{3/2, 1}_x L^2_t, L^{\infty}_x L^2_t) \cap \B(L^{3/2, 1}_x L^1_t, L^\infty_x L^1_t)$, so the conclusion follows for $\frac{\sin(t\sqrt H)P_c}{\sqrt H}$.

Likewise, $\cos(t\sqrt{-\Delta}) \in \B(\dot H^1_x, L^{6, 2}_x L^{\infty}_t \cap L^{\infty}_x L^2_t) \cap \B(|\dl|^{-1} L^{3/2, 1}_x, L^\infty_x L^1_t) \cap \B(L^1_t \dot H^1_x, L^{6, 2}_x L^{\infty}_t \cap L^{\infty}_x L^2_t) \cap \B(L^1_t |\dl|^{-1} L^{3/2, 1}_x, L^\infty_x L^1_t)$, so the conclusion also follows for $\cos(t\sqrt H)P_c$.
\end{proof}

\section*{Acknowledgments} I would like to thank Professor Wilhelm Schlag for introducing me to this problem.

\end{document}